\documentclass[11pt]{amsart}
\usepackage[dvipsnames]{xcolor}
\usepackage{amsfonts,amssymb,amsmath,amscd,amstext}
\usepackage{mathrsfs}
\usepackage[colorlinks=true,linkcolor=teal,citecolor=purple,urlcolor=Plum]{hyperref}
\usepackage[utf8]{inputenc}
\usepackage[final]{graphicx}
\usepackage{aligned-overset}
\usepackage{float}
\usepackage{comment}
\usepackage[noabbrev,capitalize,nameinlink]{cleveref}
\usepackage{mathdots}
\usepackage[a4paper,top=2.3cm,bottom=2.3cm,left=1.4cm,right=1.4cm]{geometry}
\renewcommand{\leq}{\leqslant}
\renewcommand{\geq}{\geqslant}

\newcommand{\hhh}{{\mathcal{H}}}
\newcommand{\hn}{\mathbb{H}^n}
\newcommand{\A}{\mathbf{A}}
\newcommand{\B}{\mathbf{B}}

\newcommand{\rifun}{\mathcal{Q}}
\newcommand{\penafun}{\mathcal{P}}
\newcommand{\n}[1]{\nabla^\varepsilon_{ #1 }}
\newcommand{\tmc}{{\mathscr{H}}}
\newcommand{\timc}{{\left(\mathscr{H}^\hhh\right)^{-1}}}

\newcommand{\vh}{\nu}

\newcommand{\rr}{{\mathbb{R}}}

\newcommand{\hh}{{\mathbb{H}}}

\newcommand{\Om}{\Omega}

\newcommand{\area}{\sigma}

\newcommand{\s}{\mathcal S}

\newcommand{\Y}{\mathcal X}

\newcommand{\E}{\mathbf{E}}
\newcommand{\X}{\mathbf{X}}
\newcommand{\lagrange}{\Lambda}
\newcommand{\Z}{\mathbf{Z}}
\renewcommand{\n}{\mathbf{N}}

\newcommand{\average}{{\mathchoice {\kern1ex\vcenter{\hrule height.4pt
				width 6pt
				depth0pt} \kern-9.7pt} {\kern1ex\vcenter{\hrule height.4pt width 4.3pt
				depth0pt}
			\kern-7pt} {} {} }}

\definecolor{champagne}{rgb}{0.97, 0.91, 0.81}

\definecolor{asparagus}{rgb}{0.53, 0.66, 0.42}

\DeclareMathOperator{\divv}{div}

\DeclareMathOperator{\trace}{trace}

\newcommand{\jacobi}{\mathcal{J}}

\DeclareMathOperator{\supp}{supp}
\DeclareMathOperator{\spann}{span}

\newtheorem{theorem}{Theorem}[section]
\newtheorem{proposition}[theorem]{Proposition}

\newtheorem{lemma}[theorem]{Lemma}

\theoremstyle{definition}
\newtheorem{remark}[theorem]{Remark}
\newtheorem{example}[theorem]{Example}

\theoremstyle{remark}

\numberwithin{equation}{section}

\author[M.~Fogagnolo]{Mattia Fogagnolo}
\address[M.~Fogagnolo]{Dipartimento di Matematica ``Tullio Levi-Civita'', Università degli Studi di Padova\protect\newline
\indent  via Trieste 63, 35131 Padova (PD), Italy}
\email{mattia DOT fogagnolo AT unipd DOT it}

\author[A.~Pinamonti]{Andrea Pinamonti} \address[A.~Pinamonti]{Dipartimento di Matematica, Università di Trento\protect\newline
\indent Via Sommarive, 14, 38123 Povo TN} 
\email{andrea DOT pinamonti AT unitn DOT it}

\author[S.~Verzellesi]{Simone Verzellesi}
\address[S.~Verzellesi]{Dipartimento di Matematica ``Tullio Levi-Civita'', Università degli Studi di Padova\protect\newline
	\indent  via Trieste 63, 35131 Padova (PD), Italy}
\email{simone DOT verzellesi AT unipd DOT it}

\title[Unexpected phenomena for mean curvature functionals in the Heisenberg group]{Unexpected phenomena for mean curvature functionals in the Heisenberg group}

\date{\today}

\subjclass{53C17, 53A10, 49Q10, 53C42}
\keywords{Heisenberg group, mean curvature, variation formulas, Minkowski inequality}
\thanks{\textit{Memberships and funding information.} The authors are members of the Istituto Nazionale di Alta Matematica (INdAM), Gruppo Nazionale per l'Analisi Matematica, la Probabilità e le loro Applicazioni (GNAMPA). 
M.~Fogagnolo and S.~Verzellesi are supported by the University of Padova. A.~Pinamonti is supported by the University of Trento. A.~Pinamonti and S.~Verzellesi received funding through INdAM-GNAMPA 2026 Project \emph{Variational, Geometric, and Analytic Perspectives on Regularity}, CUP E53C25002010001}

\begin{document}
\begin{abstract}

The Euclidean paradigm that \emph{spheres optimize mean curvature variational problems} breaks down in the sub-Riemannian Heisenberg group: neither the Pansu sphere nor the Korányi sphere is optimal for the variational problems associated with the Minkowski and Heintze-Karcher inequalities. Motivated by this phenomenon, we develop a variational theory for geometric problems driven by the horizontal mean curvature, focusing on the \emph{total mean curvature} functional and the related \emph{Minkowski inequality}, introducing suitable notions of \emph{non-characteristic} stationarity and stability.  
We identify a new one-parameter family of rotationally invariant critical surfaces, which we call \emph{Pansu-Minkowski spheres}.
Among them, we show that a distinguished member, the \emph{optimal Pansu-Minkowski sphere}, emerges as the unique critical point of the Minkowski quotient, and uniquely minimizes it among Pansu-Minkowski spheres.
We prove non-characteristic stability and local minimality of Pansu-Minkowski spheres under rotationally invariant perturbations, while showing their instability under unrestricted perturbations.
\end{abstract}
\maketitle
%\tableofcontents
\section{Introduction}
In the Euclidean space, the most relevant geometric variational problems driven by mean curvature exhibit a remarkable rigidity in the shape of their optimal configurations. In their appropriate class of competitors, the \emph{isoperimetric inequality}  \cite{MR2976521}, the \emph{Minkowski inequality} \cite{MR3544938,MR2522433,MR1511220}, and the \emph{Heintze-Karcher inequality} \cite{MR533065,MR1173047,MR996826,MR3702549} all single out the Euclidean sphere as the unique optimal shape. As we shall see, this Euclidean paradigm breaks down in a rather unexpected way in sub-Riemannian geometry. 
%From a variational perspective, spheres simultaneously minimize area under volume constraints, total mean curvature under area constraints, and total inverse mean curvature under volume constraints.
%\textcolor{red}{In this paper we show that this Euclidean intuition fails in a rather dramatic way. The two most natural candidates in the Heisenberg group, the Pansu sphere and the Korányi sphere, do not minimize the analogues of the Minkowski and Heintze–Karcher quotients.}METTEREI QUESTA FRASE PER ANTICIPARE IL COLPO DI SCENA

\medskip
In the sub-Riemannian Heisenberg group $\hh^1$, the prototypical model of sub-Riemannian \cite{MR3971262} and pseudohermitian \cite{MR2312336} geometry, the picture is far less understood. Considerable progress has been achieved for what concerns the isoperimetric problem  \cite{MR0829003,MR2402213,MR2548252,MR2000099,MR2898770,MR2435652}, although it is still unknown whether the closed,  constant \emph{horizontal mean curvature} surface known as \emph{Pansu sphere} (\Cref{examplepansu}) - the unique closed volume-preserving area-stationary surface - is indeed the isoperimetric set. On the other hand, essentially nothing is known for more general geometric variational problems driven by the \emph{horizontal mean curvature}.

\medskip

In this setting, a general mean curvature driven functional takes the form 
\begin{equation}\label{tmcfunsubriemderfgt5ryhythyh}
    \tmc^\hhh_f(S)=\int_S f\left(H^\hhh\right)\,d\sigma^\hhh.
\end{equation} 
Here $S\subseteq\hh^1$ is a smooth, embedded, closed surface, $H^\hhh$ is its (horizontal) mean curvature, $d\sigma^\hhh$ is its (sub-Riemannian) area element and $f:\mathbb{R}\to\mathbb{R}$ is a smooth function. We refer to \Cref{sec_preliminaries} for the related definitions. Particularly relevant examples are the \emph{(horizontal) area functional}, the \emph{total (horizontal) mean curvature}  and the \emph{total inverse (horizontal) mean curvature}, defined respectively by
 \begin{equation*}
    \sigma^\hhh(S)=\int_S\,d\sigma^\hhh,\qquad \tmc^\hhh(S)=\int_S H^\hhh\,d\sigma^\hhh,\qquad \timc(S)=\int_S\frac{1}{H^\hhh}\,d\sigma^\hhh.
 \end{equation*}
As for the Euclidean case, the homogeneous structure of $\hh^1$ induced by its \emph{intrinsic dilations} relates the minimization of such functionals, under suitable geometric constraints, to the validity of the corresponding sharp geometric inequalities. Precisely, minimizing the total mean curvature under area constraints, i.e.
\begin{equation}\label{optiprobuno}
     \inf\left\{\tmc^\hhh(S)\,:\,S\text{ is mean convex, }\area^\hhh(S)=k\right\},\qquad k>0,
 \end{equation}
 is equivalent to minimizing the (scaling invariant) \emph{Minkowski quotient}
\begin{equation}\label{minkquointro}
    \rifun_{\mathrm{mink}}^\hhh(S)=\left(\area^\hhh(S)\right)^{-\frac{2}{3}}\tmc^\hhh(S).
\end{equation}
Similarly, minimizing the total inverse mean curvature under volume constraints, i.e. 
  \begin{equation}\label{optiprobdue}
     \inf\left\{\timc(S)\,:\,S\text{ is strictly mean convex, }|\Om(S)|=k\right\},\qquad k>0.
 \end{equation}
corresponds to minimizing the (scaling invariant) \emph{Heintze-Karcher quotient}
\begin{equation}\label{hkqouintro} \rifun_{\mathrm{hk}}^\hhh(S)=|\Omega(S)|^{-1}\timc(S),
\end{equation}
where $|\Omega(S)|$ is the Lebesgue measure of the volume enclosed by $S$. In turn, sharp lower bounds for \eqref{minkquointro} and \eqref{hkqouintro} would imply (sub-Riemannian analogs of) sharp Minkowski and Heintze-Karcher inequalities.

\medskip
Beyond the analogy with the Euclidean framework, several reasons suggest that the Pansu sphere, henceforth denoted by $S_{\frac{1}{2}}$ for further convenience, should play a distinguished role in both optimization problems. Regarding the former problem, \eqref{minkquointro} is the correct first higher-order analogue of the \emph{isoperimetric quotient}
\begin{equation}\label{isoqueintro}
    \rifun^\hhh_{\mathrm{isop}}(S)=|\Om(S)|^{-\frac{3}{4}}\sigma^\hhh(S).
\end{equation}
The underlying conceptual picture is that area describes the first-order behavior of volume, while mean curvature governs the first-order behavior of area. 
Regarding \eqref{hkqouintro}, a structural connection with the isoperimetric problem is suggested by the behavior of \eqref{isoqueintro} under evolution by \emph{horizontal inverse mean curvature flow}. Loosely speaking, a family of closed embeddings $(S^t)_{t>0}$ evolves by horizontal inverse mean curvature flow if 
\begin{equation}\label{himcf}
    \frac{\partial S^t}{\partial t}=\frac{1}{H_t^\hhh}\vh_t,
\end{equation}
where $\vh_t$ denotes the \emph{horizontal unit normal} to $S_t$.
Recalling \cite{MR1731639} that 
\begin{equation*}
    \frac{d|\Om(S^t)|}{dt}=\timc(S^t),\qquad \frac{d\sigma^\hhh(S^t)}{dt}=\sigma^\hhh(S^t),
\end{equation*}
a formal computation combined with the forthcoming \Cref{pansunotmin} yields
\begin{equation*}
    \frac{d}{dt}\left(\rifun^\hhh_{\mathrm{isop}}\right)(S^t)=\frac{3}{4}\rifun^\hhh_{\mathrm{isop}}\left(S^t\right)\left(\rifun^\hhh_{\mathrm{hk}}\left(S_\frac{1}{2}\right)-\rifun^\hhh_{\mathrm{hk}}\left(S^t\right)\right).
\end{equation*}
Were the Pansu sphere the optimal configuration for \eqref{hkqouintro}, the isoperimetric quotient would be non-decreasing along \eqref{himcf}.
This perspective is supported by another remarkable fact. The \emph{Korányi sphere} (\Cref{ex_koranyisphere}) - the boundary of the unit ball associated with the \emph{Korányi gauge} - shares the same Heintze-Karcher quotient of the Pansu sphere (\Cref{Koranyinotmin}), and is in fact self-similar under inverse mean curvature flow. 
Finally, since many approaches to the Euclidean Heintze-Karcher inequality provide effective proofs of the Aleksandrov theorem \cite{MR102114} both in the smooth \cite{MR1173047,MR996826} and in the measure-theoretic \cite{MR3921314} framework, it would therefore appear natural to expect an analogous phenomenon in the Heisenberg setting.

\medskip
The first surprising outcome of the present paper is that these intuitions are in fact incorrect.
\begin{theorem}\label{intro_teo}
   Neither the Pansu sphere nor the Korányi sphere minimizes \eqref{minkquointro} or \eqref{hkqouintro}. 
\end{theorem}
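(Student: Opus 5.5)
To prove \Cref{intro_teo}, I will exhibit explicit competitors with strictly smaller quotients. I will work within rotationally invariant surfaces, where everything reduces to one-variable integrals. Write a closed, rotationally symmetric surface (symmetric with respect to the $t$-axis and the plane $t=0$) as a graph $t=\pm u(r)$, $r=\sqrt{x^2+y^2}\in[0,R]$. In these coordinates the horizontal area element, the horizontal mean curvature $H^\hhh$ and the enclosed volume have closed forms in terms of $u,u'$. With the standard normalization $X=\partial_x+2y\partial_t$, $Y=\partial_y-2x\partial_t$, one finds
\[
d\sigma^\hhh = 2\pi\sqrt{u'^2+4r^2}\,dr,\qquad H^\hhh=\frac{1}{r}\frac{d}{dr}\Big(\frac{r\,u'}{\sqrt{u'^2+4r^2}}\Big)\ \text{(up to sign)},
\]
and $|\Omega|=4\pi\int_0^R u\,r\,dr$. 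For the Pansu sphere $S_{\frac12}$ and for the Korányi sphere $(r^4+t^2)=1$, I will compute $\rifun^\hhh_{\mathrm{mink}}$ and $\rifun^\hhh_{\mathrm{hk}}$ explicitly; in particular I will recover the equality of the Heintze--Karcher quotients mentioned in the introduction.

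\textbf{Competitors.} The cleanest route is to embed both spheres in one-parameter families $S_\lambda$ of rotationally invariant, mean convex (strictly mean convex for the HK problem) closed surfaces and show that $\lambda\mapsto\rifun(S_\lambda)$ has nonzero derivative at the parameter of the sphere. For the Pansu sphere I will use the profiles that the notation $S_{\frac12}$ suggests: a family $S_\lambda$ defined by deforming the Pansu profile $u(r)=\frac12\big(r\sqrt{1-r^2}+\arccos r\big)$ through a parameter $\lambda$ entering the generating ODE. For the Korányi sphere I will use the gauge family $r^{p}+|t|^{q}=1$, or simply the vertical stretches $t=\mu\,u(r)$. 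If $\frac{d}{d\lambda}\rifun(S_\lambda)\neq0$ at the sphere, then the sphere is not even a critical point among these competitors, so it is certainly not a minimizer. This requires four computations: two quotients at each of two spheres.

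\textbf{A more conceptual route.} Alternatively, I can derive the first variation of $\tmc^\hhh$ and $\timc$ under normal deformations $\varphi\,\vh$ supported away from the characteristic points, which are the poles $r=0$. By scaling invariance, a critical point of $\rifun^\hhh_{\mathrm{mink}}$ must satisfy an Euler--Lagrange equation of the form
\[
\text{(first variation of } \tmc^\hhh) = \tfrac23\,\frac{\tmc^\hhh}{\sigma^\hhh}\,H^\hhh .
\]
For the Pansu sphere $H^\hhh$ is constant. The first variation of $\tmc^\hhh$ involves terms beyond $(H^\hhh)^2$, for example a contribution of the form $\langle\nabla_{T}\vh, \cdot\rangle$ and the imaginary curvature $\langle N,T\rangle$-type term. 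These terms are not constant on $S_{\frac12}$, so the equation fails for a suitable compactly supported $\varphi$ away from the poles. The same check applies to the Heintze--Karcher functional, where the Euler--Lagrange equation is $\delta\timc=\frac{\timc}{|\Omega|}\,\varphi$-weighted volume variation, and to the Korányi sphere, whose explicit $H^\hhh$ can be inserted.

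\textbf{Main obstacle.} The hard part is deriving the first variation formula for $\tmc^\hhh$ with the correct non-horizontal terms, and then checking non-constancy rigorously. The explicit-family approach avoids this but requires evaluating the integrals for $\rifun$ in closed form, or at least the sign of their $\lambda$-derivative. I will try that first, with numerics as a guide and exact evaluation by substitution $r=\sin\theta$ for the Pansu family.
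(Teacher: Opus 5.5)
There is a genuine gap: your proposal stops exactly where the proof has to start. The content of the theorem is that, along some admissible one-parameter family through each sphere, the quotient has nonzero derivative at the sphere. You state this criterion, but you establish it in none of the four cases. You also leave the Pansu family unspecified (``a parameter entering the generating ODE''), and you never check that the competitors stay mean convex (strictly mean convex for \eqref{hkqouintro}), which is what makes them admissible.

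The idea that actually works is the one you mention only in passing for the Kor\'anyi sphere. Both quotients are invariant under $\delta_\lambda$ but not under anisotropic Euclidean stretches. So the vertical stretches $t\mapsto\mu t$ give a genuine deformation, and they work for both spheres. Modulo $\delta_\lambda$ they are the same as the horizontal stretches $(x,t)\mapsto(Rx,t)$ of \Cref{pansunotmin} and \Cref{Koranyinotmin}. The paper checks $H^\hhh>0$ along these families from the explicit formula and computes the derivatives at $R=1$. For the Pansu sphere they are $\pi^{2/3}$ for $\rifun^\hhh_{\mathrm{mink}}$ and $-\frac23$ for $\rifun^\hhh_{\mathrm{hk}}$. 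For the Kor\'anyi sphere they are $\frac{52}{15}\bigl(4\Gamma(\frac34)/\Gamma(\frac14)\bigr)^{-2/3}$ and $\frac89$. These four nonzero numbers are the proof, and your plan supplies none of them.

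Your ``conceptual route'' rests on a false claim. By \Cref{varfortmccnrvrjvnefrjcne}, the first-variation density of $\tmc^\hhh$ along non-characteristic variations is $-4(J(\vh)\alpha+\alpha^2)$. From the profile $(\cos s,\frac s2+\frac{\sin 2s}{4})$ one finds $J(\vh)\alpha+\alpha^2\equiv-1$ on the Pansu sphere. So this term is constant; indeed the Pansu sphere is a critical point of $\tmc^\hhh$ along area-preserving non-characteristic variations, with $\lagrange=2$. What breaks the Euler--Lagrange equation for the quotient is a mismatch of constants, not non-constancy: $4\neq\frac23\frac{\tmc^\hhh}{\area^\hhh}H^\hhh=\frac83$. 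Hence $\left.\frac{d}{d\tau}\right|_{\tau=0}\rifun^\hhh_{\mathrm{mink}}(\Phi_\tau(S))=\frac43\,\area^\hhh(S)^{-2/3}\int_S\varphi\,d\sigma^\hhh$, which is nonzero for any nontrivial $\varphi\geq0$.

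Corrected in this way, the route does settle the Minkowski quotient at the Pansu sphere. It is the multiplier mismatch $\lagrange\neq\frac{2\tmc^\hhh}{3\area^\hhh}$ behind \Cref{lemmaincuisievinceellunquarto}. The Heintze--Karcher case, however, needs the first variation of $\timc$, which you do not derive, and both Kor\'anyi cases need an actual verification.

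Two smaller slips. Your area element is missing a factor $r$: it should be $2\pi r\sqrt{u'^2+4r^2}\,dr$ per sheet. The Pansu profile you quote belongs to the normalization $X=\partial_x+y\partial_t$. With $X=\partial_x+2y\partial_t$, as you chose, the Pansu sphere is $|t|=r\sqrt{1-r^2}+\arccos r$, so as written you would be testing a vertically squashed copy of it.
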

This fact has several unexpected consequences. First, it exhibits a substantial lack of symmetry in the shape of optimal configurations for the main geometric variational problems in $\hh^1$. Moreover, since the Pansu sphere is non-optimal for \eqref{hkqouintro}, \Cref{intro_teo} implies that a possible Heintze-Karcher inequality cannot be used as a direct tool toward the Aleksandrov theorem in the sub-Riemannian setting. 
The proof of \Cref{intro_teo} reveals an even more striking phenomenon. The Pansu and Korányi spheres are not merely non-minimizing: they are not even critical points, neither of the normalized functionals \eqref{minkquointro} and \eqref{hkqouintro} (\Cref{pansunotmin}, \Cref{Koranyinotmin}), nor of the associated constrained problems \eqref{optiprobuno} and \eqref{optiprobdue} (\Cref{remarkpunticriticipertuttiiproblemimacaratteristicipero}). The key observation is that \eqref{minkquointro} and \eqref{hkqouintro}, while being preserved by intrinsic dilations, are not invariant under Euclidean dilations. This fact allows one to produce suitable deformations along which both functionals strictly decrease.

\medskip
It therefore becomes natural to investigate the structure of the actual equilibrium configurations of these problems. In the present work, we mainly focus on \eqref{optiprobuno}. Besides its intrinsic interest, this choice is also motivated by analytical considerations. Indeed, as will become apparent from \Cref{teoremavariazionigeneralimainsubriem}, the total mean curvature is, together with the area functional, the only mean curvature functional whose linearization is of zero-order in $H^\hhh$
 and whose stability operator remains second-order. 

\medskip
The study of geometric variations 
must account for the singular phenomena arising from the interaction between the differential and sub-Riemannian structures. 
Specialized to hypersurfaces, the latter gives rise to the appearance of the so-called \emph{characteristic points}. 
Precisely, denoting by $\hhh$ the underlying \emph{horizontal distribution} (\Cref{sec_preliminaries}), a point $p\in S$ is characteristic when $\hhh$ is tangent to $S$ at $p$. In the \emph{characteristic set} $S_0$, the horizontal geometry collapses, and this typically creates substantial analytical difficulties. 

\medskip
Accordingly, to characterize critical points of \eqref{optiprobuno}, we introduce the notion of \emph{non-characteristic variation} (\Cref{variationssectionssssss}), a smooth variation of the ambient space supported away from a neighborhood of the characteristic set. %Despite this restriction, non-characteristic variations still suffice to detect critical points of the area functional, both with and without volume constraints \cite{MR2609016,MR2435652}. 
Therefore, appropriate sub-Riemannian variation formulas, as established in \cite{formuledivariazioneriem} in great generality, constitute a fundamental ingredient in the whole variational analysis (\Cref{teoremavariazionigeneralimainsubriem}). 

\medskip
%As observed above, equilibrium configurations for \eqref{minkquointro} (along non-characteristic variations) coincide with critical points of the total horizontal mean curvature under area constraint.
The natural first-order investigation of \eqref{optiprobuno} consists in seeking critical points of the total mean curvature functionals under non-characteristic variations which preserve the area.
In turn, exploiting the first variation formulas provided by \Cref{varformulaaererfvjecnvorevjnprop} and \Cref{varfortmccnrvrjvnefrjcne}, we show (\Cref{characterizationcriticalpointsproposition}) that this notion of stationarity is equivalent to the stationarity of the \emph{penalized functional} 
\begin{equation}\label{intro_penalfun}
    \penafun_\lagrange^\hhh(S)=\tmc^\hhh(S)-\lagrange\sigma^\hhh(S)
\end{equation}
under arbitrary non-characteristic variations, as well as to the validity of the \emph{Euler-Lagrange equation}
\begin{equation}\label{intor_el}
                -4\left(J(\vh)\alpha+\alpha^2\right)-\lagrange H^\hhh=0.
\end{equation}
In the above formulas, $J$ is the \emph{complex structure} of $\hh^1$, so that, denoting by $\vh$ the \emph{horizontal unit normal} to $S$,  $J(\vh)$ generates the \emph{horizontal tangent space} to $S$ (\Cref{sec_preliminaries}). Moreover, $\alpha$ is the \emph{fundamental function} associated with $S$ (\Cref{sec_preliminaries}), and $\lagrange$ is the appropriate Lagrange multiplier arising from the area constraint.

\medskip
Motivated by the symmetries of the Pansu sphere, we look for solutions to \eqref{intor_el} in the class of \emph{rotationally invariant surfaces} (\Cref{sec_rotinv}), which are symmetric with respect to rotations around a distinguished \emph{vertical direction}. 
In this setting, the presence of such a symmetry is quite natural, as vertical rotations act as isometries on the underlying sub-Riemannian structure \cite{MR3385193}.
Combining topological arguments with a detailed analysis of \eqref{intor_el}, in \Cref{sec_rotinvcritpointstmc} we completely characterize the rotationally invariant solutions to \eqref{intor_el}. 
Precisely, setting $4L=\lagrange$, we show that every rotationally invariant solution to \eqref{intor_el} is obtained by vertical rotation of the profile curve $\gamma_L=(x_L,t_L):[-\arccos\sqrt{1-2L},\arccos\sqrt{1-2L}]\to\rr^2$, where $L\in\left(0,\frac{1}{2}\right]$ and 
\begin{equation}\label{pansuminkparamtheorem}
\begin{cases}
x_L(s)=\dfrac{1}{2L}\left(\cos s-\dfrac{1-2L}{\cos s}\right), \\[1em]
t_L(s)=\dfrac{1}{4L^2}\left(\dfrac{s}{2}+\dfrac{\sin 2s}{4}-(1-2L)^2\tan s\right),
\end{cases}
\end{equation}
thus obtaining a one-parameter family of rotationally invariant surfaces, say $S_L$ for $0<L\leq\frac{1}{2},$ which are critical points of the total mean curvature along area-preserving non-characteristic variations.

\medskip
This characterization carries with it a further series of consequences. First, unlike in the Euclidean setting, where the sphere is the unique equilibrium configuration of the total mean curvature under area-preserving variations \cite[Theorem 5.3]{MR3544938}, in the Heisenberg group one finds infinitely many critical points. Most notably, one easily realizes (\Cref{examplepansu} and \Cref{pansuiscriticalpoint}) that the limiting configuration corresponding to $L=\frac{1}{2}$ is precisely the Pansu sphere $S_\frac{1}{2}$, whence the choice of notation. Surprisingly, although the Pansu sphere fails to be stationary for $\tmc^\hhh$ under \emph{arbitrary} area-preserving variations (\Cref{remarkpunticriticipertuttiiproblemimacaratteristicipero}), it is nevertheless stationary for $\tmc^\hhh$ along \emph{non-characteristic} area-preserving variations. Remarkably, the stationarity of the Pansu sphere fails even for \emph{amost} non-characteristic variations. Indeed, the variations we perform to prove \Cref{intro_teo}, although not non-characteristic, do not move characteristic points. Besides confirming that the Pansu sphere still plays a role in this variational framework, this phenomenon also reveals a substantial difference with the sub-Riemannian isoperimetric problem: the Pansu sphere is indeed the unique closed, constant horizontal mean curvature surface, e.g. among rotationally invariant surfaces \cite{MR2271950} or even among competitors with at least one isolated characteristic point \cite{MR2435652}.
Both because of their connection with the Minkowski problem and because the Pansu sphere itself arises as a limiting configuration within the family, we refer to the above surfaces as \emph{Pansu-Minkowski spheres}.

\medskip
Among Pansu-Minkowski spheres, one particular element nevertheless plays a privileged role. This phenomenon becomes apparent when comparing the \emph{constrained} stationarity problem associated with \eqref{optiprobuno} with the \emph{unconstrained} stationarity problem associated with \eqref{minkquointro}. Although being equivalent from the viewpoint of minimization, this equivalence breaks down at the \emph{non-characteristic} first-order level. Indeed, while every critical point of $\rifun^\hhh_\mathrm{mink}$ along non-characteristic variations is also stationary for $\tmc^\hhh$ along area-preserving non-characteristic variations (\Cref{propimplicazionenonequaivmeovtrbvnrtvonrgvo}), the converse fails in general. More precisely, within the family of Pansu-Minkowski spheres, the only surface which is critical for \eqref{minkquointro} is the Pansu-Minkowski sphere corresponding to $L=\frac{1}{4}$. It is therefore not surprising that this configuration plays a distinguished role in the minimization of the Minkowski quotient: $S_\frac{1}{4}$ is the unique minimizer of \eqref{minkquointro} within the class of Pansu-Minkowski spheres. Accordingly, we refer to it as the \emph{optimal Pansu-Minkowski sphere}, since the above facts naturally single it out as the primary candidate minimizer for the Minkowski quotient.
\begin{figure}[H]
    \centering
    \includegraphics[
        width=\textwidth,
        trim={0.5cm 2.3cm 0cm 2.3cm},
        clip
    ]{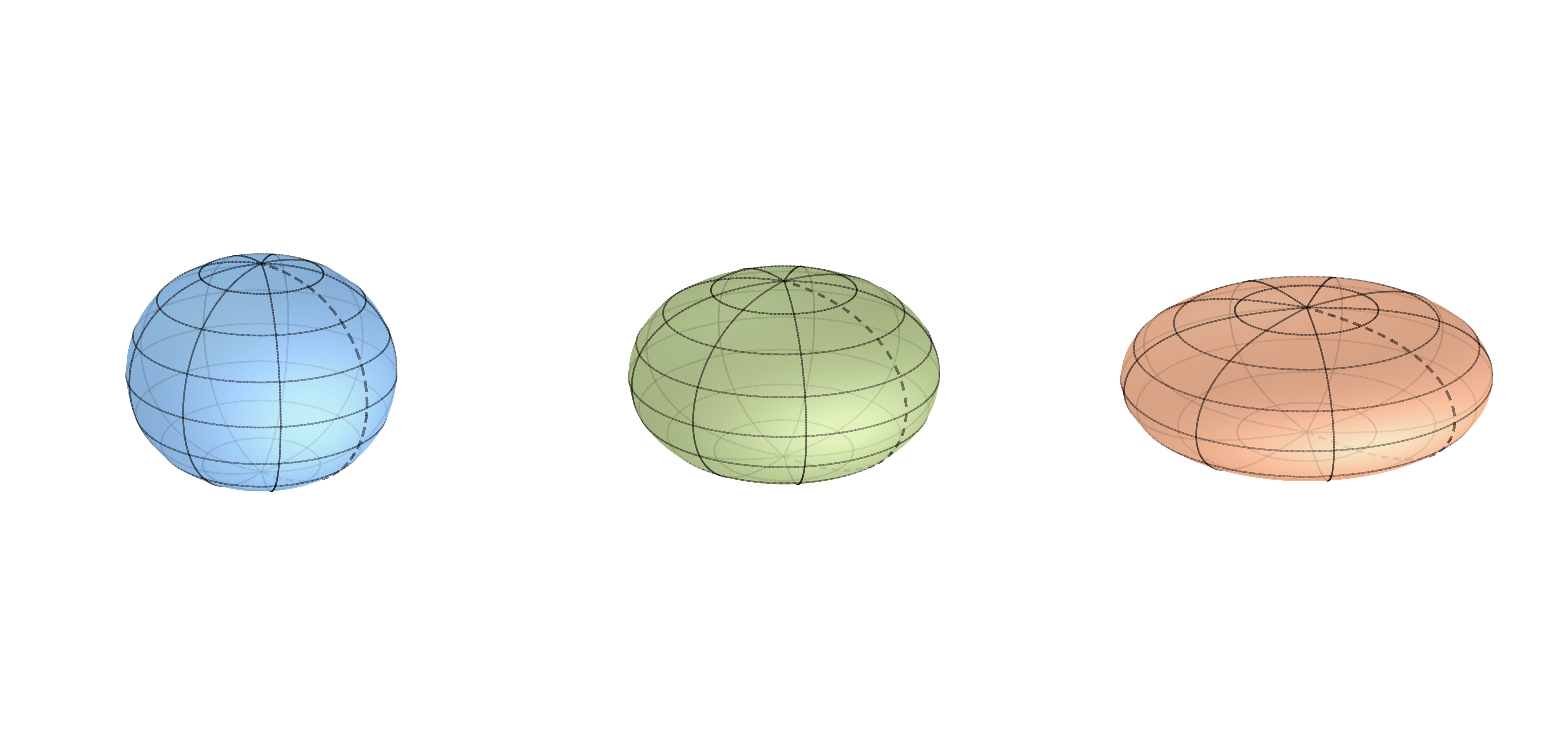}
    \caption{From left to right, the Pansu-Minkowski sphere for $L=\frac{1}{8}$, the optimal Pansu-Minkowski sphere $\left(L=\frac{1}{4}\right)$ and the Pansu sphere $\left(L=\frac{1}{2}\right)$. The three spheres are rescaled via intrinsic dilations to have unit sub-Riemannian area.}
    \label{fig:pm_spheres}
\end{figure}
The next statement summarizes the above considerations.
\begin{theorem}\label{intro_teo2}
   For any   $L\in\left(0,\frac{1}{2}\right]$, define $\gamma_L$ as in \eqref{pansuminkparamtheorem}.
    Denote by $S_L$ its associated rotationally invariant surface. 
    Let $S\subseteq\hh^1$ be a rotationally invariant, closed, mean convex surface. The following are equivalent:
    \begin{itemize}
           \item [(i)] $S$ is a critical point for $\tmc^\hhh$ along area-preserving non-characteristic variations;
               \item [(ii)] $S$ is, up to dilations and vertical translations,  a Pansu-Minkowski sphere $S_L$ for some $L\in\left(0,\frac{1}{2}\right]$.
    \end{itemize}
    Moreover, the following are equivalent:
    \begin{itemize}
           \item [(iii)] $S$ is a critical point for $\rifun^\hhh_{\mathrm{mink}}$ along non-characteristic variations;
               \item [(iv)] $S$ is, up to dilations and vertical translations, the
 optimal Pansu-Minkowski sphere $S_{\frac{1}{4}}$.
    \end{itemize}
    Finally, the
 optimal Pansu-Minkowski sphere $S_{\frac{1}{4}}$ minimizes $\mathcal \rifun_{\mathrm{mink}}^\hhh $ within Pansu-Minkowski spheres:
        \begin{equation*}
        \rifun _{\mathrm{mink}}^\hhh(S_L)\geq (18\pi)^\frac{1}{3},\qquad  \rifun_{\mathrm{mink}} ^\hhh(S_L)= (18\pi)^\frac{1}{3}\text{ if and only if }L=\frac{1}{4}.
    \end{equation*}
\end{theorem}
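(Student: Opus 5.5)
The proof splits according to the three claims of the statement. For (i)$\Leftrightarrow$(ii) I would invoke the characterization of area-constrained non-characteristic stationarity (\Cref{characterizationcriticalpointsproposition}): (i) is equivalent to the Euler--Lagrange equation \eqref{intor_el} holding on $S\setminus S_0$ for some $\lagrange$. I then specialize to rotationally invariant surfaces. The profile curve $(x(s),t(s))$ is parametrized by horizontal arclength, with $x>0$ off the axis. In these coordinates $H^\hhh$, $\alpha$ and $J(\vh)\alpha$ become explicit expressions in $x$, $x'$, $t'$. Writing $x'=\cos\theta$ (or an analogous angle variable) reduces \eqref{intor_el} to an autonomous first-order ODE system admitting a first integral.

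Next, with $\lagrange=4L$, I integrate the system and impose the boundary conditions. The profile must be closed, meet the axis $x=0$ at exactly two points (the characteristic points, forced by rotational invariance and the topology of a sphere), and be smooth there. Mean convexity fixes $\lagrange>0$, and these conditions select the integration constant, giving exactly \eqref{pansuminkparamtheorem} with $L\in(0,\tfrac12]$. For $L>\tfrac12$ the endpoint $\arccos\sqrt{1-2L}$ is undefined and no closed profile exists. For the converse direction one substitutes \eqref{pansuminkparamtheorem} back into \eqref{intor_el}, which is a direct check. Dilations and vertical translations only rescale $\lagrange$ and the arclength, which explains the normalization. I expect this classification to be the main obstacle: ruling out other closed orbits and handling the behaviour near the axis, where the ODE degenerates, requires a careful phase-plane and topological argument.

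For (iii)$\Leftrightarrow$(iv) I would use \Cref{propimplicazionenonequaivmeovtrbvnrtvonrgvo}: a critical point of $\rifun^\hhh_{\mathrm{mink}}$ is also critical for the constrained problem, so by the first part it is some $S_L$. Differentiating $\rifun^\hhh_{\mathrm{mink}}$ gives the first-variation condition
\[
\delta\tmc^\hhh=\tfrac23\,\frac{\tmc^\hhh}{\sigma^\hhh}\,\delta\sigma^\hhh ,
\]
so the multiplier must satisfy $\lagrange=\tfrac23\,\tmc^\hhh(S)/\sigma^\hhh(S)$. I therefore compute $\sigma^\hhh(S_L)$ and $\tmc^\hhh(S_L)$ explicitly from the parametrization. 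For a rotationally invariant surface these are $2\pi\int x\,ds$ and $2\pi\int xH^\hhh\,ds$, which are elementary trigonometric integrals in $s$ over $[-\arccos\sqrt{1-2L},\arccos\sqrt{1-2L}]$. The consistency equation $4L=\tfrac23\,\tmc^\hhh(S_L)/\sigma^\hhh(S_L)$ should then reduce to an algebraic equation in $L$ whose only root in $(0,\tfrac12]$ is $L=\tfrac14$.

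Finally, using the same closed forms, I form the one-variable function
\[
g(L)=\sigma^\hhh(S_L)^{-2/3}\,\tmc^\hhh(S_L),
\]
and study its monotonicity on $(0,\tfrac12]$. I expect $g'$ to have the sign of $L-\tfrac14$; this is consistent with the previous step, since critical points of $g$ correspond to the multiplier identity. Evaluating $g(\tfrac14)=(18\pi)^{1/3}$ then gives the stated bound and the equality case.
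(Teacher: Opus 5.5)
\textbf{Where the gap is.} The gap is in (i)$\Rightarrow$(ii). Your route (Euler--Lagrange equation, first integral, boundary conditions) is the paper's, but you defer its core, and the shortcuts you offer in its place do not hold.

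\emph{Torus case.} You assume the profile meets the axis twice, yet a closed rotationally invariant surface may be a torus. That case must be excluded.

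\emph{Bound on $L$.} The claim that no closed profile exists for $L>\frac12$ because $\arccos\sqrt{1-2L}$ is undefined is circular: it only shows that one formula breaks down.

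\emph{Role of mean convexity.} Mean convexity is not where $\lagrange>0$ comes from. Normalize $\max x=1$ and evaluate \eqref{EUlerLagrange} at the maximum point, where $\dot x=0$, $\ddot x\leq0$, $\dot t>0$. It reads $(1-L)\ddot x=-L\dot t^2$, which gives $L\in(0,1)$ once $L=0$ is excluded. That exclusion comes from listing the complete $L=0$ solutions (plane, cylinder, $x=\sqrt{as+b}$), none of which is closed. Mean convexity is needed elsewhere: it forces $\dot x$ to vanish only at the extremal radii. At any further critical point of $x$ with nonpositive curvature, mean convexity gives $\dot t>0$, hence $\ddot x\geq0$, while \eqref{EUlerLagrange} gives $x^2\ddot x(1-Lx)=-L\dot t^2<0$. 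Hence the profile is a pair of vertical graphs over $s=x\in(0,1)$, vertical at $s=1$. This is your ``ruling out other closed orbits'', and it is not automatic.

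\textbf{The missing first integral.} On the lower graph, set $w=\dot t$ and $z=w/\sqrt{w^2+s^2}$ (the radial component of $\vh$). The left-hand side of \eqref{EUlerLagrangetgrafder} equals $-\frac12(w^2+s^2)^2\frac{d}{ds}\left(z^2-2Lsz\right)$, so $z^2-2Lsz+a=0$.
\begin{itemize}
\item Verticality, $z\to1$ as $s\to1^-$, forces $a=2L-1$.
\item Real solvability as $s\to0^+$ forces $L\leq\frac12$; this is the actual proof of the bound.
\item The root $Ls-\sqrt{L^2s^2-2L+1}$ is discarded because it tends to $2L-1\neq1$.
\item Then $w=sz/\sqrt{1-z^2}$ gives $w^-_L$, hence $\gamma_L$.
\item For a torus one would also have $z\to-1$ at the inner radius $\check s$, whence $2L(1+\check s)=0$, which is impossible.
\end{itemize}
The constant is fixed at the maximal radius, not at the axis. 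Every $S_L$ is only $C^2$ at its poles; the Pansu sphere is $t=\frac{\pi}{4}-\frac{r^3}{3}+O(r^5)$ near its upper pole, $r^2=x^2+y^2$. Demanding genuine smoothness at the axis, as you suggest, would therefore exclude all of them.

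\textbf{Parts (iii)$\Leftrightarrow$(iv) and the minimization.} These follow the paper: reduce to some $S_L$ via \Cref{propimplicazionenonequaivmeovtrbvnrtvonrgvo}, show that criticality of $\rifun^\hhh_{\mathrm{mink}}$ amounts to $\lagrange=\frac{2\tmc^\hhh(S_L)}{3\area^\hhh(S_L)}$, then study the explicit quotient in $\ell=2\arccos\sqrt{1-2L}$. Three corrections apply.
\begin{itemize}
\item The weights $2\pi\int x\,ds$ and $2\pi\int xH^\hhh\,ds$ require $\dot t^2+x^2\dot x^2=1$, which fails for the parameter of \eqref{pansuminkparamtheorem}. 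There $d\area^\hhh=\frac{x^2h}{\cos s}\,ds\,d\theta$ and $H^\hhh\,d\area^\hhh=x(x+h)\,ds\,d\theta$, with $h$ as in \eqref{cosefinalidefofhfoga}.
\item The multiplier identity is transcendental, not algebraic. Uniqueness of $L=\frac14$ comes from writing $\lagrange-\frac{2\tmc^\hhh}{3\area^\hhh}$ as $4L(4L-1)$ times a factor whose positivity rests on $2\ell-\sin2\ell>0$.
\item Critical points of $g$ do not match the multiplier identity by any general principle, because $L\mapsto S_L$ moves the characteristic points. With the normalization of \eqref{pansuminkparamtheorem}, one finds $\frac{d}{d\ell}\tmc^\hhh(S_L)-\lagrange\frac{d}{d\ell}\area^\hhh(S_L)=-\frac{8\pi\sin\ell\cos\ell\,(\ell-\sin\ell)}{(1-\cos\ell)^3}$. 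This is nonzero for $\ell\in(0,\pi)\setminus\{\frac{\pi}{2}\}$, so the sign of $g'$ must be computed directly, as the paper does.
\end{itemize}
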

A further natural question concerns the stability of Pansu-Minkowski spheres under non-characteristic perturbations. As for the isoperimetric problem \cite{MR0731682,MR0917854},
the equivalence between the constrained problem \eqref{optiprobuno} and the unconstrained problem associated with \eqref{intro_penalfun} breaks down at second-order. Exploiting the second variation formulas established in \cite{formuledivariazioneriem} and recalled in \Cref{teoremavariazionigeneralimainsubriem}, we show that stability of critical points of $\tmc^\hhh$ under area-preserving non-characteristic variations is equivalent to stability of critical points of $\penafun^\hhh_L$ under non-characteristic variations which are area-preserving at first-order (\Cref{carstabprop}). 
Consequently, our analysis is carried out at the level of the penalized functional $\penafun^\hhh_L$. Evaluated at the corresponding critical point $S_L$, its second variation along an \emph{arbitrary} non-characteristic variation $\Phi$ is given (\Cref{sec_stab}) by
\begin{equation}\label{secvarformintro}
    \delta^2\penafun^\hhh_L\left(S_L\right)[\Phi]=4\int_{S_L}\Big(\big(-\s\varphi\,J(\vh)\varphi-L\left(J(\vh)\varphi\right)^2\big)+\big(2\s\alpha-H^\hhh\left(\alpha^2+4L^2\right)\big)\varphi^2\Big)\,d\sigma^\hhh.
\end{equation}
In this formula, $\s$ is (up to normalization) the unique vector field which completes $J(\vh)$ to an orthogonal frame of $TS$ (cf. \Cref{sec_preliminaries}), while $\varphi$ is an appropriate projection of the \emph{velocity} of $\Phi$ (\Cref{sec_var_form}).

\medskip
Within the class of rotationally invariant non-characteristic perturbations, all Pansu-Minkowski spheres exhibit a much stronger property than mere stability. Indeed, we prove that the quadratic form associated with \eqref{secvarformintro} is uniformly coercive with respect to \emph{arbitrary} non-characteristic variations. More precisely,
\begin{equation}\label{stabilitformulateoremaequazidrvorjvntrgvojtvnrtvojn}
  \delta^2\penafun^\hhh_L(S_L)[\Phi]\geq 4(1-L)\int_{S_L}\left(J(\vh)\varphi\right)^2\,d\sigma^\hhh+\frac{8L(1-2L)}{1-L}\int_{S_L}\varphi^2\,d\sigma^\hhh.
    \end{equation}
We stress that the above lower bound does not require any first-order area constraint, and is therefore substantially stronger than standard stability. This is in sharp contrast with more familiar settings. For instance, Euclidean spheres do not satisfy such a strong stability property for the penalized functional associated with the Euclidean isoperimetric problem \cite{MR0731682}.
As a direct consequence of \eqref{stabilitformulateoremaequazidrvorjvntrgvojtvnrtvojn}, all Pansu--Minkowski spheres locally minimize the total mean curvature under sufficiently small rotationally invariant non-characteristic perturbations. The above facts can be summarized as follows.
\begin{theorem}\label{intro_teo3}
    Let $L\in\left(0,\frac{1}{2}\right].$ Let $\Phi$ be a non-characteristic rotationally invariant variation (\Cref{sec_stab}). Fix $\delta>0$, and set $I(\delta)=\left(-\arccos\sqrt{1-2L}+\delta,\arccos\sqrt{1-2L}-\delta\right)$. Then:
    \begin{itemize}
        \item [(i)] the lower bound \eqref{stabilitformulateoremaequazidrvorjvntrgvojtvnrtvojn} holds.
    In particular, $S_L$ is a stable critical point of $\tmc^\hhh$ along area-preserving, non-characteristic, rotationally invariant variations;
    \item [(ii)] there exists $\varepsilon=\varepsilon(\delta,L)>0$ such that, if $\varphi\in C^\infty_c(I(\delta))$ satisfies 
    \begin{equation*}
        \area^\hhh\left(S_L^\varphi\right)=\area^\hhh\left(S_L\right),\qquad\|\varphi\|_{C^2(I(\delta))}\leq\varepsilon,
    \end{equation*}
    where $S^\varphi_L$ is a (rotationally invariant) horizontally normal graph over $S_L$ (\Cref{sec_minloc}), then 
    \begin{equation*}
        \tmc^\hhh\left(S_L\right)\leq\tmc^\hhh\left(S_L^\varphi\right).
    \end{equation*}
    \end{itemize}
\end{theorem}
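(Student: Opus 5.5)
For (i), I will work directly on the profile curve. For a rotationally invariant non-characteristic variation, the normal velocity $\varphi$ is a function of the profile parameter $s$ alone and is compactly supported away from the two characteristic points at $s=\pm\arccos\sqrt{1-2L}$. Along the rotational orbits, one of $J(\vh)$, $\s$ acts as a derivative in $s$, suitably weighted, and the other annihilates $\varphi$. The natural expectation is that $\s\varphi=0$, since $\s$ carries the vertical/rotational component, while $J(\vh)\varphi$ is a weighted $s$-derivative $w(s)\varphi'(s)$. I will compute $\alpha$, $H^\hhh$, $\s\alpha$ and $d\sigma^\hhh$ explicitly from \eqref{pansuminkparamtheorem}, and I expect them to be trigonometric expressions in $\cos s$ and $1-2L$.

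Substituting into \eqref{secvarformintro} gives a one-dimensional quadratic form
\begin{equation*}
Q(\varphi)=4\int \big(-L\,w^2(\varphi')^2+V(s)\varphi^2\big)\,\rho(s)\,ds .
\end{equation*}
As written, the first-order term has a sign that seems bad, so I will check the sign conventions carefully. The target bound \eqref{stabilitformulateoremaequazidrvorjvntrgvojtvnrtvojn} has coefficient $(1-L)$ in front of $(J(\vh)\varphi)^2$. This suggests that the cross term $-\s\varphi\,J(\vh)\varphi$ does \emph{not} vanish but, after a Riccati-type integration by parts $\varphi\,\varphi'=\tfrac12(\varphi^2)'$, contributes a positive gradient term plus a potential correction. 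So the plan is as follows:
\begin{itemize}
\item[(a)] express $\s\varphi$ and $J(\vh)\varphi$ through $\varphi'$ and $\varphi$, possibly $\s\varphi=a\varphi'+b\varphi$ with $a,b$ explicit;
\item[(b)] integrate the mixed terms $\varphi\varphi'$ by parts against the weight $\rho$, which is legitimate since $\varphi$ has compact support away from characteristic points;
\item[(c)] obtain $Q(\varphi)=\int(A(\varphi')^2+B\varphi^2)\rho$ and verify pointwise that $A\geq(1-L)w^2$ and $B\geq\frac{2L(1-2L)}{1-L}$.
\end{itemize}
If the pointwise potential bound fails, I will instead use a weighted Young inequality $2|\varphi\varphi'|\leq\epsilon(\varphi')^2+\epsilon^{-1}\varphi^2$ with $\epsilon$ optimized, which plausibly explains the constant $\frac{8L(1-2L)}{1-L}$. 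Stability under area-preserving variations then follows from \Cref{carstabprop}, since the bound holds even without the constraint.

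For (ii), I will write $F(\varphi)=\tmc^\hhh(S_L^\varphi)-L'\sigma^\hhh(S_L^\varphi)$ with the correct multiplier $\lagrange=4L$. On the area constraint, $\tmc^\hhh(S_L^\varphi)-\tmc^\hhh(S_L)=F(\varphi)-F(0)$. Since $S_L$ is critical by \Cref{intro_teo2}, Taylor expansion gives
\begin{equation*}
F(\varphi)-F(0)=\tfrac12\delta^2\penafun^\hhh_L[\varphi]+R(\varphi).
\end{equation*}
Because everything is one-dimensional and supported in $I(\delta)$, where the surface is uniformly non-characteristic, the remainder satisfies $|R(\varphi)|\leq C(\delta,L)\|\varphi\|_{C^2}\int(\varphi'^2+\varphi^2)$. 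This holds because $F$ is an integral of a smooth Lagrangian in $(s,\varphi,\varphi',\varphi'')$, and the $\varphi''$ terms can be integrated by parts, using that $H^\hhh$ is linear in the second derivatives. The coercivity from (i), with $w$ bounded below on $I(\delta)$, absorbs the remainder for $\varepsilon$ small.

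The main obstacle is step (c): the explicit computation of $\s\alpha$, $H^\hhh$ and $\alpha$ along the profile, and the verification of the sharp pointwise inequalities. The remainder control in (ii) also needs care with the second-order terms, since the second-variation bound controls only first derivatives.
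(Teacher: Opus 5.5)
Your two-step architecture is the paper's. Part (i) goes through pointwise lower bounds on the coefficient of $(J(\vh)\varphi)^2$ and on the zero-order potential (\Cref{zeroorderstabilitilemma}, \Cref{firstorderstabilitylemma}). Part (ii) goes through a Taylor expansion of $\penafun^\hhh_L$ along $\tau\mapsto S_L^{\tau\varphi}$, integration by parts of the $\ddot\varphi$-terms, and absorption of a remainder of size $\|\varphi\|_{C^2}\int(\dot\varphi^2+\varphi^2)$. However, as written (i) is not proved, and (ii) has a hole at $L=\frac12$.

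\textbf{Part (i).} The entire content of (i) is the computation you defer, and your structural guesses about it are wrong.

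First, $\s$ is a vector field, so $\s\varphi=a\dot\varphi+b\varphi$ forces $b=0$. Second, $\s\varphi$ does not vanish, so your worry about a bad sign is an artifact of the guess $\s\varphi=0$. Use \eqref{jvinlocalcoords} and \eqref{sinloccords}, together with $\dot x=-\tan s\,h$, $\dot t=xh$ and $\sqrt{\dot t^2+x^2\dot x^2}=xh/\cos s$, where $h$ is as in \eqref{cosefinalidefofhfoga}. For $\theta$-independent $\varphi$ this gives
\begin{equation*}
J(\vh)\varphi=-\frac{\cos s}{h}\,\dot\varphi,\qquad \s\varphi=\frac{\cos^2 s}{xh}\,\dot\varphi=-\frac{\cos s}{x}\,J(\vh)\varphi ,
\end{equation*}
hence
\begin{equation*}
-\s\varphi\,J(\vh)\varphi-L\left(J(\vh)\varphi\right)^2=\left(\frac{\cos s}{x}-L\right)\left(J(\vh)\varphi\right)^2,\qquad \frac{\cos s}{x}=\frac{2L\cos^2 s}{\cos^2 s-(1-2L)}\geq 1 ,
\end{equation*}
with equality at $s=0$.

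So the cross term is a nonnegative multiple of $\dot\varphi^2$, and the quadratic form contains no $\varphi\dot\varphi$ terms. Neither the integration by parts of your step (b) nor a Young inequality is needed.

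Likewise, with $\alpha=\sin s/x$ and $H^\hhh=\frac{4L\cos^4 s}{\cos^4 s-(1-2L)^2}$, the potential is exactly
\begin{equation*}
2\s\alpha-H^\hhh(\alpha^2+4L^2)=\frac{2(1-2L)}{x^3h}.
\end{equation*}
Since $x$ and $h$ decrease in $|s|$, with $x(0)=1$ and $h(0)=\frac{1-L}{L}$, its minimum is $\frac{2L(1-2L)}{1-L}$. This minimum value, not an optimized Young inequality, is where the constant $\frac{8L(1-2L)}{1-L}$ comes from.

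\textbf{Part (ii).} Your absorption step fails as stated at $L=\frac12$, which the theorem includes (the Pansu sphere). There the potential $\frac{2(1-2L)}{x^3h}$ vanishes identically, so (i) gives no control of $\int\varphi^2$. Your remainder bound, however, contains $\|\varphi\|_{C^2}\int\varphi^2$.

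You need to add the Poincaré inequality on the bounded interval $I(\delta)$. The weights $\frac{x^2\cos s}{h}$ (multiplying $\dot\varphi^2$) and $\frac{x^2h}{\cos s}$ (multiplying $\varphi^2$) in $d\sigma^\hhh$ are bounded above and below on $I(\delta)$. Together with Poincaré, this yields
\begin{equation*}
\delta^2\penafun^\hhh_L(S_L)[\Phi]\geq c(\delta,L)\left(\|\varphi\|^2_{L^2(I(\delta))}+\|\dot\varphi\|^2_{L^2(I(\delta))}\right)
\end{equation*}
for every $L\in(0,\frac12]$. This is exactly the coercivity the paper uses in \Cref{localminimalitipropositionen}. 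With this addition, your treatment of (ii) coincides with the paper's.
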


The perhaps most surprising phenomenon emerges once rotational symmetry is removed. We show that Pansu-Minkowski spheres become unstable under general perturbations. Precisely, we construct highly oscillatory, first-order area-preserving angular variations along which the second variation of $\penafun^\hhh_L$ becomes negative. By \Cref{carstabprop}, this yields instability of Pansu-Minkowski spheres for the constrained problem \eqref{optiprobuno}.
\begin{theorem}\label{teononstabintro}
     Let $L\in\left(0,\frac{1}{2}\right].$ There is a non-characteristic first-order area-preserving variation $\Phi$ such that 
    \begin{equation*}
  \delta^2\penafun^\hhh_L(S_L)[\Phi]<0.
    \end{equation*}
  Pansu-Minkowski spheres are then unstable for $\tmc^\hhh$ along area-preserving non-characteristic variations.
\end{theorem}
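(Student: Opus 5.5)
The plan is to construct an explicit non-characteristic variation that oscillates rapidly in the angular direction and to plug it into the second variation formula \eqref{secvarformintro}. I will use coordinates on $S_L$ adapted to the rotational symmetry: the profile parameter $s\in(-\arccos\sqrt{1-2L},\arccos\sqrt{1-2L})$ and the rotation angle $\theta\in[0,2\pi)$. Both $J(\vh)$ and $\s$ are combinations of $\partial_s$ and $\partial_\theta$ with coefficients depending only on $s$. Since $\s$ is orthogonal to $J(\vh)$ within $TS$ and $J(\vh)$ is horizontal, $J(\vh)$ must have a nonzero $\partial_\theta$ component, and so must $\s$ in general. Rotationally invariant $\varphi$ gave the coercive bound \eqref{stabilitformulateoremaequazidrvorjvntrgvojtvnrtvojn}. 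The negative contribution must therefore come from the indefinite cross term $-\s\varphi\,J(\vh)\varphi$, activated by $\theta$-dependence.

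I will take the ansatz $\varphi(s,\theta)=\eta(s)\big(a(s)\cos(k\theta)+b(s)\sin(k\theta)\big)$, with $\eta\in C^\infty_c$ supported away from the endpoints, so that the variation stays away from the characteristic set, which consists of the poles. I then choose the ratio between $a,b$ (or a phase $\psi(s)$, writing $\varphi=\eta\cos(k\theta+\psi(s))$) so that, to leading order in $k$, the derivatives $J(\vh)\varphi$ and $\s\varphi$ are dominated by the $\partial_\theta$ contributions, of size $k\,c_1(s)$ and $k\,c_2(s)$. After integrating in $\theta$, the quadratic part becomes approximately
\[
\frac{k^2}{2}\int \eta^2\big(-c_1c_2-Lc_1^2\big)\,d\sigma^\hhh .
\]
A subtlety is that a pure $\theta$-oscillation fixes the ratio of $\s\varphi$ to $J(\vh)\varphi$ as $c_2/c_1$. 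I therefore expect to need a mixed phase $\psi(s)=m\,k\,s$, which makes both derivatives of order $k$ with a tunable linear combination, say $J(\vh)\varphi\approx k(\alpha_1+m\beta_1)\sin$ and $\s\varphi\approx k(\alpha_2+m\beta_2)\sin$. The integrand $-(\text{S-part})(\text{J-part})-L(\text{J-part})^2$ is an indefinite quadratic form in $(1,m)$ whenever the vectors $(\alpha_1,\beta_1)$ and $(\alpha_2,\beta_2)$ are independent, which holds because $\s$ and $J(\vh)$ span $TS$ at non-characteristic points. Hence there is a choice of $m$, possibly depending on $s$, making the leading coefficient strictly negative on $\supp\eta$. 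The zero-order term $(2\s\alpha-H^\hhh(\alpha^2+4L^2))\varphi^2$ is only $O(1)$, so for $k$ large the second variation is dominated by a negative multiple of $k^2$.

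For the first-order area constraint, the first variation of area for non-characteristic variations is $\int H^\hhh\varphi\,d\sigma^\hhh$ (\Cref{varformulaaererfvjecnvorevjnprop}). Because $H^\hhh$ and $d\sigma^\hhh$ are rotationally invariant, any $\varphi$ with $k\geq1$ integrates to zero in $\theta$, so the first-order area-preserving condition holds automatically. By \Cref{carstabprop}, a negative value of $\delta^2\penafun^\hhh_L(S_L)[\Phi]$ along such a variation yields instability for $\tmc^\hhh$ under area-preserving non-characteristic variations.

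The main obstacle is the explicit computation of the $\partial_s,\partial_\theta$ components of $J(\vh)$ and $\s$ on $S_L$ from the parametrization \eqref{pansuminkparamtheorem}, together with the verification that the two vectors are independent with the right sign structure; the indefiniteness argument relies on this. If the mixed-phase trick runs into trouble, the first fallback is to exploit only the $\partial_\theta$ components and to check directly that $c_1c_2+Lc_1^2>0$ somewhere. Since $\s$ contains a vertical component, I expect $c_2/c_1$ to be computable and to have a favorable sign on part of the profile.
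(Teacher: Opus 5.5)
Your proposal is correct and follows the paper's proof: a high-frequency angular oscillation $\varphi$ supported away from the poles is automatically first-order area-preserving, its $k^2$-term dominates after integrating in $\theta$, and \Cref{carstabprop} concludes. Your ``fallback'' is exactly the paper's choice $\varphi=\psi(s)\sin M\theta$, so the mixed phase is unnecessary. On $S_L$ one has $c_1=\cos s/x>0$ and $c_2=\sin^2 s/x^2\geq 0$, so the coefficient of $(\partial_\theta\varphi)^2$ (\Cref{firstorderstabilitylemma}), namely $-c_1c_2-Lc_1^2=-\big(\frac{\sin^2 s\cos s}{x^3}+\frac{L\cos^2 s}{x^2}\big)$, is already strictly negative everywhere.
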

\Cref{teononstabintro} implies that any global minimizer of \eqref{minkquointro}, should it exist, cannot be rotationally invariant. Although sharing analogies with the sub-Riemannian \emph{isodiametric problem} \cite{MR2990131}, this unexpected lack of symmetry casts serious doubts on the very existence - or at least on the uniqueness - of minimizers of \eqref{minkquointro} (cf. e.g. \cite{cheng2025minimizingsurfacescrinvariant,MR3836671} for similar issues in related settings).

\medskip
Our analysis leaves several open questions. First, it is tempting to conjecture that the optimal Pansu- Minkowski sphere is the global minimizer of \eqref{minkquointro} within the class of rotationally invariant surfaces. On the other hand, although it is plausible that no global minimizer exists outside this symmetric class, these outcomes suggest that the search for sharp mean curvature driven inequalities requires abandoning symmetry as a guiding principle.
We expect similarly exotic phenomena to arise in connection with the Heintze-Karcher problem \eqref{optiprobdue}. Indeed, although the Pansu sphere is not stationary for \eqref{optiprobdue} under arbitrary volume-preserving variations, it nevertheless becomes stationary when one restricts to \emph{non-characteristic} volume-preserving variations (\Cref{remark_hkelforpansuinremarkfrefv}). It therefore remains somewhat mysterious that the Pansu sphere systematically emerges in these variational problems, despite not realizing the corresponding optimal configuration. 

\subsection*{Plan of the paper} The paper is organized as follows. In \Cref{sec_preliminaries} we collect some preliminaries on Heisenberg groups (\Cref{subsec_hg}) and on the geometry of hypersurfaces (\Cref{subsechypersurf}). In \Cref{sec_rotinv} we introduce rotationally invariant surfaces, discuss their properties (\Cref{secdefproprotinv}), give some examples (\Cref{subsec_exam}), introduce the relevant functionals (\Cref{subsecfundrivenbyhormincurv}) and prove \Cref{intro_teo} (\Cref{pansunotmin}, \Cref{Koranyinotmin}). In \Cref{sec_var_form} we focus on the variation formulas for \eqref{tmcfunsubriemderfgt5ryhythyh}. After introducing variations (\Cref{variationssectionssssss}), we state \Cref{teoremavariazionigeneralimainsubriem} (\Cref{subsec_mainvarfotm}) and we specialize it (\Cref{sezionetmcstatstab}) to \eqref{optiprobuno} (\Cref{varformulaaererfvjecnvorevjnprop}, \Cref{varfortmccnrvrjvnefrjcne}), discussing the relevant notions of stationarity (\Cref{characterizationcriticalpointsproposition}) and stability (\Cref{carstabprop}). In \Cref{sec_rotinvcritpointstmc} we characterize rotationally invariant critical points of \eqref{optiprobuno}, proving \Cref{intro_teo2}.
 In \Cref{sec_stab} and \Cref{sec_minloc} we discuss their stability,  instability and local minimality, proving \Cref{intro_teo3} and \Cref{teononstabintro}. 

\subsection*{Acknowledgments} The authors thank R. Monti, J. Pozuelo, M. Ritoré and D. Vittone for fruitful discussion about the addressed topics. Part of this research was carried out while S. Verzellesi was visiting the Department of Mathematics at the University of Trento.

\section{Preliminaries}\label{sec_preliminaries}
%In this section we collect some basic preliminaries on Heisenberg groups and on the geometry of its hypersurfaces.
\subsection{Heisenberg groups}\label{subsec_hg} Fix $n\geq 1$. The $n$-th Heisenberg group $(\hn,\cdot)$ is $\mathbb R^{2n+1}$ with the group law
\begin{equation}
\label{eq:Hproduct}
    p\cdot p'=( x, y,t)\cdot (x',y',t')=\left( x+ x', y+ y', t+t'+\sum_{j=1}^n\left(x_j'y_j-x_jy_j'\right)\right),
\end{equation}
where $p=(x,y,t)=(x_1,\ldots,x_n,y_1,\ldots,y_n,t)$.
This group law realizes $\hn$ as stratified Lie group. Its \emph{horizontal distribution} $\hhh$ is generated by the left-invariant vector fields
\begin{equation*}
    Z_i=X_i=\frac{\partial}{\partial x_i}+y_i\frac{\partial}{\partial t},\qquad Z_{n+i}=Y_i=\frac{\partial}{\partial y_i}-x_i\frac{\partial}{\partial t},\qquad i=1,\ldots,n.
\end{equation*}
 A vector field which is tangent to $\hhh$ at every point is called \emph{horizontal}. %Accordingly, the \emph{horizontal gradient} of a sufficiently regular function is the horizontal vector field defined by 
 %\begin{equation*}
 %    \nabla^\hhh f=\sum_{i=1}^{2n}Z_ifZ_i.
% \end{equation*}
 Setting $Z_{2n+1}=T=\frac{\partial}{\partial t}$, then $
Z_1,\ldots,Z_{2n+1}
$ is a global frame of left-invariant vector fields. We may identify a point $p\in\hh^n$ with
\begin{equation*}\label{identification}
    \sum_{i=1}^{2n+1}p_j Z_j(p)\in T_p\hh^n.
\end{equation*}
The only nontrivial commutation relations among $Z_1,\ldots,Z_{2n+1}$ are
\begin{equation*}
    [X_i,Y_i]=-2T,\qquad i=1,\ldots,n.
\end{equation*}
The \emph{complex structure} $J:\Gamma(T\hh^n)\longrightarrow\Gamma(T\hh^n)$ is the unique $C^\infty(\hh^n)$-linear map which satisfies
\begin{equation*}
    J(X_i)=Y_i,\qquad J(Y_i)=-X_i\qquad\text{and}\qquad J(T)=0,\qquad i=1,\ldots,n.
\end{equation*}
 The triple $(\hh^n,\hhh,J)$ is a prototypical \emph{pseudohermitian manifold} (cf. \cite[Appendix]{MR2165405}). 
Moreover, $\hn$ inherits a sub-Riemannian structure $\left(\hh^n,\hhh,\langle\cdot,\cdot\rangle|_\hhh\right)$ by restricting to $\hhh$ the unique Riemannian metric $\langle\cdot,\cdot\rangle$  making $X_1,\ldots,X_n,Y_1,\ldots,Y_n,T$ orthonormal. In the following, orthogonality is meant with respect to $\langle\cdot,\cdot\rangle$.
%We denote the Riemannian gradient by 
%\begin{equation*}
%    \nabla f=\sum_{i=1}^{2n+1}Z_i f Z_i.
%\end{equation*}
 The \emph{pseudohermitian connection} $\nabla$ (cf. \cite{MR4193432}) is the unique metric connection with torsion 
\begin{equation}\label{pseudotorsion}
        \nabla_\A\B-\nabla_\B \A-[\A,\B]=2\langle J(\A),\B\rangle T,\qquad \A,\B\in\Gamma(T\hh^n).
\end{equation}
 We recall that $\nabla$ vanishes along left-invariant vector fields (cf. \cite{MR2898770}), i.e.
\begin{equation}\label{phflat}
    \nabla Z_i=0,\qquad i=1,\ldots,2n+1.
\end{equation}
In addition, $\hh^n$ carries a \emph{homogeneous structure} provided by \emph{intrinsic dilations} (cf. ~\cite{MR2363343}). 
Namely, we set 
\begin{equation*}
\delta_\lambda(x,y,t) = (\lambda x,\lambda y,\lambda^2 t) \qquad \text{for every $\lambda \geq 0$, $(x,y,t) \in\hh^n$}.
\end{equation*}
In this way, $\delta_\lambda$ is a Lie group isomorphism of $\hh^n$ for any $\lambda>0$. The Riemannian volume induced by $\langle\cdot,\cdot\rangle$ is the Haar measure of the group, i.e. the standard Lebesgue measure. It satisfies the homogeneity condition
\begin{equation*} \label{eq_dilation_Lebesgue}
|\delta_\lambda(E)|=\lambda^Q|E| \qquad \text{for every $E \subseteq\hh^n$ measurable, $\lambda \geq 0$,}
\end{equation*}
where $Q\coloneqq 2n+2$ is known as \emph{homogeneous dimension} of $(\hh^n,\cdot)$ (cf. \cite{MR3587666}). Therefore, the Riemannian divergence induced by $\left\langle\cdot,\cdot\right\rangle$ is the Euclidean divergence, and can be computed by 
\begin{equation}\label{divergenza4567}
    \divv\A=\divv\left(\sum_{i=1}^{2n+1}A^iZ_i\right)=\sum_{i=1}^{2n+1}Z_iA^i,\qquad \A\in\Gamma(T\hh^n).
\end{equation}
\subsection{Hypersurfaces}\label{subsechypersurf}  Let $S\subseteq\hh^n$ be a connected, embedded, two-sided hypersurface. If $S$ is closed, we denote by $\Om(S)$ the boundary region it encloses. Recall that $p\in S$ is called a \emph{characteristic point} if $\hhh_p=T_p S$. The set of characteristic points of $S$ is denoted by $S_0$. Throughout the paper, we assume that $S$ is of class $C^2$ and that $S\setminus S_0$ is smooth. At non-characteristic points, the \emph{horizontal tangent space} $\hhh TS$ is the smooth, $(2n-1)$-dimensional distribution defined by 
\begin{equation*}
    \hhh T_p S=\hhh_p\cap T_p S,\qquad p\in S\setminus S_0.
\end{equation*}
Denote by $\n$ the Riemannian unit normal to $S$, and by $\n^\hhh$ its orthogonal projection onto $\hhh$. 
Then, the \emph{horizontal unit normal }  $$\vh=\frac{\n^\hhh}{|\n^\hhh|}$$ 
is well-defined on $S\setminus S_0$, and is the unique, up to sign, horizontal unit vector field orthogonal to $\hhh TS$. Notice that $p$ is a characteristic point if and only if $|\n^\hhh|=0$. Close to every non-characteristic point, it is always possible to extend $\vh$ to a full neighborhood in $\hh^n$ by setting 
   \begin{equation}\label{normcondist}
    \vh=\nabla^\hhh d,
\end{equation}
   where $d$ is the signed \emph{Carnot-Carathéodory distance} from $S$ (cf. \cite{simons,MR4193432}). %Indeed, locally near $p$, $d$ is smooth and satisfies the eikonal equation $|\nabla^\hhh d|=1$ . 
   Henceforth, $\vh$ is always extended as in \eqref{normcondist}.
   The \emph{fundamental function} $\alpha$ is defined on $S\setminus S_0$ as the unique smooth function such that 
   \begin{equation*}\label{firstdefinitionofthevecotrs}
       \s\coloneqq T-\alpha\vh\in\Gamma(TS).
   \end{equation*}
   It is known (cf. \cite{simons,MR4923606}) that $\alpha=Td$. 
   Denote by $\hhh' TS$ the distribution defined by
\begin{equation*}
    \hhh'T_p S=\hhh T_pS\cap J\left(\hhh T_p S\right),\qquad p\in S\setminus S_0.
\end{equation*}
Then, $\hhh'TS$ is a $(2n-2)$-dimensional sub-bundle of $\hhh TS$, and the latter can be orthogonally decomposed as $
    \hhh TS=\hhh' TS\oplus\spann J(\vh). $
Notice that, in the first Heisenberg group, $\hhh' TS=\{0\}$. 
It is easy to check that 
\begin{equation*}
    \n=\frac{1}{\sqrt{1+\alpha^2}}\vh+\frac{\alpha}{\sqrt{1+\alpha^2}}T,\qquad \alpha=\frac{\left\langle \n,T\right\rangle}{|\n^\hhh|}.
\end{equation*}
%\begin{equation}
  %  TS=\hhh' TS\oplus\spann J(\vh)\oplus\spann \s,
%\end{equation}
%where 
%\begin{equation}\label{formadiesseepsilon}
 %   \s=-\frac{\alpha}{\sqrt{1+\eps^2\alpha^2}}\vh+\frac{1}{\sqrt{1+\eps^2\alpha^2}}\eps T.
%\end{equation}
%Moreover, set
%\begin{equation*}
%    \s\coloneqq\lim_{\eps\to 0}\frac{1}{\eps}\s^\eps=-\alpha\vh+T.
%\end{equation*}
%We recall that the \emph{pseudohermitian connection} of $\hh^n$, say $\nabla$, is the unique metric connection such that
%\begin{equation*}
%    \nabla_{Z_i}Z_j=0,\qquad i,j=1,\ldots,2n+1.
%\end{equation*}
%Accordingly, if $\A,\B,\C\in\Gamma(\hhh)$, \eqref{levi_civita} implies that
%\begin{equation}\label{relazionetraleconnessioni}
%    \left\langle\nabla^\eps_\A\B,\C\right\rangle_\eps=\left\langle\nabla_\A\B,\C\right\rangle.
%\end{equation}
The \emph{horizontal second fundamental form} $h^\hhh$ and the \emph{symmetric horizontal second fundamental form} (cf. \cite{simons,MR4193432}) $\tilde h^\hhh$ are defined on $S\setminus S_0$ respectively by
\begin{equation*}
    h^\hhh(\A,\B)\coloneqq\left\langle A^\hhh(\A),\B\right\rangle,\qquad \tilde h^\hhh(\A,\B)\coloneqq\frac{h^\hhh(\A,\B)+h^\hhh(\B,\A)}{2},\qquad\A,\B\in\Gamma(\hhh TS),
\end{equation*}
 where $ A^\hhh(\A)\coloneqq \nabla_\A\vh$ is the \emph{horizontal shape operator}. The forms $h^\hhh$ and $\tilde h^\hhh$ are related (cf. \cite{simons}) by
\begin{equation*}\label{rapptrahetildacca2026}
    \tilde h^\hhh(\A,\B)=h^\hhh(\A,\B)+\alpha\left\langle J(\A),\B\right\rangle,\qquad\A,\B\in\Gamma(\hhh TS).
\end{equation*}
%For any $\eps>0$, denote by $\sigma^\eps$ the Riemannian surface measure induced by $g_\eps$. It is easy to check that 
%\begin{equation}\label{areaelementeps}
 %   \sigma^\eps=\frac{\sqrt{1+\eps^2\alpha^2}}{\eps\sqrt{1+\alpha^2}}\sigma^1.
%\end{equation}
The \emph{horizontal mean curvature} is then defined on $S\setminus S_0$ by
\begin{equation*}
    H^\hhh=\trace h^\hhh=\trace \tilde h^\hhh.
\end{equation*}
It is well-known that 
\begin{equation*}\label{menounoomogenh}
    H^\hhh_{\delta_\lambda(S)}(\delta_\lambda(p))=\frac{1}{\lambda}H^\hhh_S(p),\qquad p\in S\setminus S_0,\lambda>0. 
\end{equation*}
We say that $S$ is \emph{minimal} if $H^\hhh=0$ on $S\setminus S_0$, \emph{mean convex} if $H^\hhh\geq 0$ on $S\setminus S_0$, and \emph{strictly mean convex} if $H^\hhh>0$ on $S\setminus S_0$. 
Finally, the relevant sub-Riemannian surface measure $\sigma^\hhh$ is defined (cf. \cite{MR2354992, MR2262196}) by
\begin{equation}\label{areaelementorizz}
    \sigma^\hhh\coloneqq \frac{1}{\sqrt{1+\alpha^2}}\sigma,
\end{equation}
where $\sigma$ is the Riemannian surface measure induced by $\langle\cdot,\cdot\rangle$. Again, $\sigma^\hhh$ satisfies the homogeneity property
\begin{equation*}\label{qmenounoomogensigmah}
    \sigma^\hhh(\delta_\lambda(S))=\lambda^{Q-1}\sigma^\hhh(S).
\end{equation*}
The \emph{tangent pseudohermitian connection} $\nabla^S$ is the affine connection defined on $S$ by
\begin{equation*}
    \nabla^S _\A \B=\nabla_\A\B-\langle\nabla_\A\B,\vh\rangle\vh,\qquad \A,\B\in\Gamma(\hhh TS).
\end{equation*}
Let $\E_1,\ldots,\E_{2n-1}$ be any local orthonormal frame of $\hhh TS$. If $\varphi\in C^\infty(S\setminus S_0)$ and $\A\in\Gamma(\hhh TS)$ is supported in $S\setminus S_0$, the \emph{horizontal tangential gradient} of $\varphi$ and the  \emph{horizontal tangential divergence} of $\A$ are defined by 
\begin{equation*}
    \nabla ^{\hhh,S}\varphi=\sum_{i=1}^{2n-1}\left(\E_i\varphi\right)\E_i,\qquad \divv^{\hhh,S}\A\coloneqq\sum_{i=1}^{2n-1}\left\langle \nabla^S_{\E_i}\A,\E_i\right\rangle.
\end{equation*}
The \emph{horizontal tangential Laplacian} and the \emph{modified horizontal tangential Laplacian} are then defined by
\begin{equation*}
    \Delta^{\hhh,S} \varphi=\divv^{\hhh,S}\nabla^{\hhh,S}\varphi,\qquad \hat\Delta^{\hhh,S}\varphi\coloneqq \Delta^{\hhh,S}\varphi +2\alpha J(\vh)\varphi.
\end{equation*}
   Finally, we denote by $\jacobi^\hhh$ the \emph{horizontal Jacobi operator}
    \begin{equation*}\label{horizontaljacopdefmain}
    \jacobi^\hhh \varphi=-\hat\Delta^{\hhh,S}\varphi-\varphi\left(|\tilde h^\hhh|^2+4J(\vh)\alpha+(2n+2)\alpha^2\right),\qquad\varphi\in C^\infty(S\setminus S_0).
    \end{equation*}
    Unlike $\Delta^{\hhh,S}$  (cf. \cite{MR2354992}), both $\hat\Delta^{\hhh,S}$ and $\jacobi^\hhh$ are self-adjoint on $C^\infty_c(S\setminus S_0)$.

\section{Rotationally invariant surfaces}\label{sec_rotinv}
\subsection{Definition and properties}\label{secdefproprotinv} We specialize \Cref{subsechypersurf} to rotationally invariant surfaces in $\hh^1.$ A rotationally invariant surface $S$ is, by definition (cf. \cite{MR2271950}), a surface which is invariant under rotation around the $t$-axis. In particular, $S\setminus \{(0,0,t)\}$ can be smoothly parametrized, up to removing a meridian, by the map 
\begin{equation*}
    P(s,\theta)= (x(s)\cos \theta,x(s)\sin\theta,t(s)),\qquad s\in I,\,\theta\in (0,2\pi),
\end{equation*}
where $I\subseteq\rr^n$ is an open, possibly unbounded, interval, and $\gamma(s)=(x(s),t(s))$ is a smooth, regular, embedded curve in $\rr^2$, parametrized counterclockwise and such that $x>0$. The latter is known as the \emph{profile} of $S$. The parametrization $P$ induces local coordinates $(s,\theta)$ on $S$.
The tangent space of $S$ at $P(s,\theta)$ is generated by 
\begin{equation}\label{localcoordsrotation}
\begin{split}
   \frac{\partial}{\partial s}&=\left. \frac{\partial P}{\partial s}\right|_{(s,\theta)}=\dot x(s)\cos\theta\frac{\partial}{\partial x}+\dot x(s)\sin\theta\frac{\partial}{\partial y}+\dot t(s)\frac{\partial}{\partial t}=\dot x(s)\cos\theta X+\dot x(s)\sin\theta Y+\dot t(s)T,\\  
    \frac{\partial}{\partial\theta} &=\left.\frac{\partial P}{\partial \theta}\right|_{(s,\theta)}=- x(s)\sin\theta\frac{\partial}{\partial x}+x(s)\cos\theta\frac{\partial}{\partial y}=- x(s)\sin\theta X+x(s)\cos\theta Y+x(s)^2 T.
    \end{split}
\end{equation}
In particular,
\begin{equation*}
    \n=\frac{1}{\sqrt{(1+x(s)^2)\dot x(s)^2+\dot t(s)^2}}\left(\left(\dot t(s)\cos\theta-x(s)\dot x(s)\sin\theta \right)X+\left(\dot t(s)\sin\theta+x(s)\dot x(s)\cos\theta \right)Y-\dot x(s)T\right),
\end{equation*}
so that
\begin{equation}\label{espressionedinh}
    \left|\n^\hhh\right|=\frac{\sqrt{\dot t(s)^2+x(s)^2\dot x(s)^2}}{\sqrt{(1+x(s)^2)\dot x(s)^2+\dot t(s)^2}}.
\end{equation}
By \eqref{espressionedinh}, we conclude that $S\setminus \{(0,0,t)\}$ is non-characteristic. On the other hand, every possible intersection between $S$ and the vertical axis is a characteristic point of $S$. In conclusion, $S_0=S\cap  \{(0,0,t)\}$.
Therefore, by our previous computations, we deduce that, on $S\setminus S_0$, 
\begin{align}
    \alpha&=-\frac{\dot x(s)}{\sqrt{\Dot t(s)^2+x(s)^2\Dot x(s)^2}}\label{alfasurotinvggggg},\\
    \vh&=\left(\frac{\Dot t(s)\cos\theta-x(s)\Dot x(s)\sin\theta}{\sqrt{\Dot t(s)^2+x(s)^2\Dot x(s)^2}}\right)X+\left(\frac{\Dot t(s)\sin\theta+x(s)\Dot x(s)\cos\theta}{\sqrt{\Dot t(s)^2+x(s)^2\Dot x(s)^2}}\right)Y\label{normalerotinv},\\
        J(\vh)&=-\left(\frac{\Dot t(s)\sin\theta+x(s)\Dot x(s)\cos\theta}{\sqrt{\Dot t(s)^2+x(s)^2\Dot x(s)^2}}\right)X+ \left(\frac{\Dot t(s)\cos\theta-x(s)\Dot x(s)\sin\theta}{\sqrt{\Dot t(s)^2+x(s)^2\Dot x(s)^2}}\right)Y,\label{jvleftinvfields}\\
       \s&=\left(\frac{\dot x(s)\Dot t(s)\cos\theta-x(s)\Dot x(s)^2\sin\theta}{\Dot t(s)^2+x(s)^2\Dot x(s)^2}\right)X+\left(\frac{\dot x(s)\Dot t(s)\sin\theta+x(s)\Dot x(s)^2\cos\theta}{\Dot t(s)^2+x(s)^2\Dot x(s)^2}\right)Y+T\label{exprsrotinv}.
\end{align}
For further convenience, it is useful to express $J(\vh)$ and $\s$ in local coordinates. 
\begin{lemma}\label{jvwithresptolocalchart}
    It holds that, on $S\setminus S_0$,
    \begin{align}
        J(\vh)&=-\left(\frac{x(s)}{\sqrt{\Dot t(s)^2+x(s)^2\Dot x(s)^2}}\right)\frac{\partial}{\partial s}+\left(\frac{\dot t(s)}{x(s)\sqrt{\Dot t(s)^2+x(s)^2\Dot x(s)^2}}\right)\frac{\partial}{\partial \theta}\label{jvinlocalcoords},\\
        \s&=\left(\frac{\dot t (s)}{\Dot t(s)^2+x(s)^2\Dot x(s)^2}\right)\frac{\partial}{\partial s}+\left(\frac{\dot x(s)^2}{\Dot t(s)^2+x(s)^2\Dot x(s)^2}\right)\frac{\partial}{\partial \theta}\label{sinloccords}.
    \end{align}
    \end{lemma}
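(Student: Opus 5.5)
Since $J(\vh)$ and $\s$ are both tangent to $S$ on $S\setminus S_0$ (the first because it spans $\hhh TS$, the second by definition of $\alpha$), each of them is a linear combination $a\,\frac{\partial}{\partial s}+b\,\frac{\partial}{\partial\theta}$ with smooth coefficients $a,b$. The two coordinate fields are linearly independent there, because $x>0$ and $\gamma$ is regular. So the plan is to take the candidate coefficients in \eqref{jvinlocalcoords} and \eqref{sinloccords} and check that the combination reproduces \eqref{jvleftinvfields} and \eqref{exprsrotinv}.

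Set $D=\sqrt{\dot t^2+x^2\dot x^2}$. By \eqref{localcoordsrotation}, a general combination is
\begin{equation*}
a\frac{\partial}{\partial s}+b\frac{\partial}{\partial \theta}=\left(a\dot x\cos\theta-bx\sin\theta\right)X+\left(a\dot x\sin\theta+bx\cos\theta\right)Y+\left(a\dot t+bx^2\right)T.
\end{equation*}
For $J(\vh)$, substitute $a=-x/D$ and $b=\dot t/(xD)$:
\begin{itemize}
\item the $X$-coefficient is $-(x\dot x\cos\theta+\dot t\sin\theta)/D$;
\item the $Y$-coefficient is $(\dot t\cos\theta-x\dot x\sin\theta)/D$;
\item the $T$-coefficient is $-x\dot t/D+x\dot t/D=0$.
\end{itemize}
This is exactly \eqref{jvleftinvfields}.

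For $\s$, substitute $a=\dot t/D^2$ and $b=\dot x^2/D^2$:
\begin{itemize}
\item the $X$-coefficient is $(\dot x\dot t\cos\theta-x\dot x^2\sin\theta)/D^2$;
\item the $Y$-coefficient is $(\dot x\dot t\sin\theta+x\dot x^2\cos\theta)/D^2$;
\item the $T$-coefficient is $(\dot t^2+x^2\dot x^2)/D^2=1$.
\end{itemize}
This matches \eqref{exprsrotinv}.

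Since $X,Y,T$ is a global frame and the representation in $\frac{\partial}{\partial s},\frac{\partial}{\partial\theta}$ is unique, the lemma follows.

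There is no real obstacle here; the computation is direct. The only points to watch are keeping the $x^2T$ term in $\frac{\partial}{\partial\theta}$ from \eqref{localcoordsrotation}, which is what makes the $T$-components come out to $0$ and $1$, and dividing by $x$, which is legitimate because $x>0$ away from the axis. If one prefers to derive the coefficients rather than verify them, solve the $2\times 2$ system given by the $T$-component together with one horizontal component; its determinant is a nonzero multiple of $x D$.
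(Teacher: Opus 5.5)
Your proposal is correct and is essentially the paper's argument: both expand $a\frac{\partial}{\partial s}+b\frac{\partial}{\partial\theta}$ in the frame $X,Y,T$ via \eqref{localcoordsrotation} and compare with \eqref{jvleftinvfields} and \eqref{exprsrotinv}; the paper solves the resulting overdetermined system by using the $T$-equation to eliminate the $\frac{\partial}{\partial\theta}$-coefficient, whereas you substitute the stated coefficients and check them, which is slightly cleaner. One small correction to your closing aside: the determinant of the $T$- and $X$-equations is $-x(\dot t\sin\theta+x\dot x\cos\theta)=xD\langle J(\vh),X\rangle$, which can vanish for some $\theta$, so it is not a nonzero multiple of $xD$; one must choose, at each point, a horizontal component of $J(\vh)$ that is nonzero, exactly as the paper does.
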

    \begin{proof}
        Let $a_1,a_2$ be such that $J(\vh)=a_1\frac{\partial}{\partial s}+a_2\frac{\partial}{\partial\theta}$. Then
        \begin{equation}\label{exprjvinproofloccord}
            J(\vh)=a_1\frac{\partial}{\partial s}+a_2\frac{\partial}{\partial\theta}\overset{\eqref{localcoordsrotation}}{=}(a_1\dot x\cos\theta-a_2 x\sin\theta)X+(a_1\dot x\sin\theta+a_2 x\cos\theta)Y+(a_1\dot t+a_2 x^2) T.
        \end{equation}
        Therefore, comparing \eqref{jvleftinvfields} with \eqref{exprjvinproofloccord},
         \begin{equation}\label{oversisteminproofuno} 
\begin{cases}
a_1\dot x\cos\theta-a_2 x\sin\theta&=\,-\dfrac{\Dot t\sin\theta+x\Dot x\cos\theta}{\sqrt{ \dot t^2+x^2\Dot x^2 }},\\
a_1\dot x\sin\theta+a_2 x\cos\theta&=\,\dfrac{\Dot t\cos\theta-x\Dot x\sin\theta}{\sqrt{ \dot t^2+x^2\Dot x^2 }},\\
a_1\dot t+a_2 x^2&=\, 0.
\end{cases}
\end{equation}
      By the third equation of \eqref{oversisteminproofuno}, and since $x\neq 0$ on $S\setminus S_0$,
      %, then $a_2=-\tfrac{a_1\dot t}{x^2}$,
   %     so that 
        \begin{align*}
            a_1\dot x\cos\theta-a_2 x\sin\theta&=a_1\left(\frac{x\dot x\cos\theta+\dot t\sin\theta}{x}\right)\\
            a_1\dot x\sin\theta+a_2 x\cos\theta&=a_1\left(\frac{x\dot x\sin\theta-\dot t\cos\theta}{x}\right).
        \end{align*}
        Therefore
       \begin{equation*}
\begin{cases}
a_1\left(x\dot x\cos\theta+\dot t\sin\theta\right)&=\,-\dfrac{x}{\sqrt{ \dot t^2+x^2\Dot x^2 }}\left(x\Dot x\cos\theta+\Dot t\sin\theta\right),\\[8pt]
a_1\left(x\dot x\sin\theta-\dot t\cos\theta\right)&=\,-\dfrac{x}{\sqrt{ \dot t^2+x^2\Dot x^2 }}\left(x\dot x\sin\theta-\dot t\cos\theta\right).
\end{cases}
\end{equation*}
        Since either $\langle J(\vh),X\rangle \neq 0$ or $\langle J(\vh),Y\rangle \neq 0$, \eqref{jvinlocalcoords} follows. 
        Let $b_1,b_2$ be such that $\s=b_1\frac{\partial}{\partial s}+b_2\frac{\partial}{\partial \theta}$. Then
        \begin{equation}\label{exprsbunobdue}
            \s=b_1\frac{\partial}{\partial s}+b_2\frac{\partial}{\partial \theta}\overset{\eqref{localcoordsrotation}}{=}(b_1\dot x\cos\theta-b_2 x\sin\theta)X+(b_1\dot x\sin\theta+b_2 x\cos\theta)Y+(b_1\dot t+b_2 x^2) T.
        \end{equation}
         Therefore, comparing \eqref{exprsrotinv} with \eqref{exprsbunobdue},
         \begin{equation}\label{oversisteminproofdueswsssssss} 
\begin{cases}
b_1\dot x\cos\theta-b_2 x\sin\theta&=\,\dfrac{\dot x\Dot t\cos\theta-x\Dot x^2\sin\theta}{ \dot t^2+x^2\Dot x^2 },\\
b_1\dot x\sin\theta+b_2 x\cos\theta&=\,\dfrac{\dot x\Dot t\sin\theta+x\Dot x^2\cos\theta}{ \dot t^2+x^2\Dot x^2 },\\
b_1\dot t+b_2 x^2&=\, 1.
\end{cases}
\end{equation}
By the third equation of \eqref{oversisteminproofdueswsssssss},
%,
%\begin{equation*}
 %   b_2=-\frac{b_1\dot t}{x^2}+\frac{1}{x^2}, 
%\end{equation*}
%whence
\begin{align*}
    b_1\dot x\cos\theta-b_2 x\sin\theta&=b_1\left(\frac{x\dot x\cos\theta+\dot t\sin\theta}{x}\right)-\frac{\sin\theta}{x}\\
    b_1\dot x\sin\theta+b_2 x\cos\theta&=b_1\left(\frac{x\dot x\sin\theta-\dot t\cos\theta}{x}\right)+\frac{\cos\theta}{x}.
\end{align*}
Therefore,
\begin{equation*}
\begin{cases}
b_1\left(x\dot x\cos\theta+\dot t\sin\theta\right)&=\,\dfrac{\dot t}{ \dot t^2+x^2\Dot x^2 }\left(x\dot x\cos\theta+\dot t\sin\theta\right),\\[6pt]
b_1\left(x\dot x\sin\theta-\dot t\cos\theta\right)&=\,\dfrac{\dot t}{ \dot t^2+x^2\Dot x^2 }\left(x\dot x\sin\theta-\dot t\cos\theta\right).
\end{cases}
\end{equation*}
Arguing as above, \eqref{sinloccords} follows.
%we conclude that 
%\begin{equation*}
%    b_1=\frac{\dot t}{W},\qquad b_2=\frac{\dot x^2}{W}.
%\end{equation*}
    \end{proof}
%Assume that $S\subseteq\rr^3$ is an embedded, closed surface of class $C^2$.
%Let us denote by $\Om$ the open set enclosed by $S$. 
%Assume in addition that $S$ is invariant under rotations around the $t$-axis. Assume without loss of generality that $(0,0,0)\in S$.Let $\gamma(s)=(x(s),t(s))$ be such that $\gamma\in C^2([a,b],\rr^2)$, $\gamma$ is an embedding onto $\Gamma=\gamma([a,b])$, $x(s)> 0$ for any $s\in(a,b)$, $\gamma(a)=(0,0)$, $x(b)=0$ and 
%\begin{equation*}
 %   (s,\varphi)\overset{P}{\mapsto} (x(s)\cos \varphi,x(s)\sin\varphi,t(s))
%\end{equation*}
%is a parametrization of $S$ of class $C^2$, where $(s,\theta)\in[a,b]\times[0,2\pi]$.
Next, noticing that
\begin{equation*}
    \left\langle\frac{\partial}{\partial s},\frac{\partial}{\partial s}\right\rangle\left\langle\frac{\partial}{\partial \theta},\frac{\partial}{\partial \theta}\right\rangle-\left\langle\frac{\partial}{\partial s},\frac{\partial}{\partial \theta}\right\rangle^2\overset{\eqref{localcoordsrotation}}{=}\left(\dot x(s)^2+\dot t(s)^2\right)\left(x(s)^2+x(s)^4\right)-x(s)^4\dot t(s)^2
\end{equation*}
 and recalling that $x>0$, we deduce
 \begin{equation*}
     d\sigma=x(s)\sqrt{\left(1+x(s)^2\right)\dot x(s)^2+\dot t(s)^2}\,ds\,d\theta,
 \end{equation*}
 hence \begin{equation}\label{subriemareaelementrotinv}
     d\sigma ^\hhh\overset{\eqref{areaelementorizz},\eqref{alfasurotinvggggg}}{=}x(s)\sqrt{\dot t(s)^2+x(s)^2\dot x(s)^2}\,ds\,d\theta.
 \end{equation}
 If $S$ is closed, the divergence theorem yields 
\begin{equation*}
    |\Om(S)|\overset{\eqref{divergenza4567}}{=}\frac{1}{2}\int_\Om\divv\left(x X+y Y\right)\,d\mathcal L^{3}=\left|\int_S\left(x\left\langle \n,X\right\rangle+y\left\langle \n,Y\right\rangle\right)\,d\sigma\right|=\pi\left|\int_Ix(s)^2\dot t(s)\,ds\right|.
\end{equation*}
In particular, when $\dot t\geq 0$ on $I$, 
\begin{equation}\label{volumewhendottgeqzero}
    |\Om(S)|=\pi\int_Ix(s)^2\dot t(s)\,ds.
\end{equation}
 Finally, the horizontal mean curvature of rotationally invariant surfaces can be expressed as follows (cf. \cite{MR2271950}). 
 \begin{lemma}\label{lemmaespressionecurvaturarotinv}
     It holds that,  on $S\setminus S_0$,
\begin{equation}\label{hotinvinlemma}
H^\hhh=\dfrac{x(s)^3  (\dot{x}(s)\ddot{t}(s)-\ddot{x}(s)\dot{t}(s))+\dot{t}(s)^3  }{x(s)\left(x(s)^2\dot{x}(s)^2 +\dot{t}(s)^2 \right)^{3/2}}.
\end{equation}
    
 \end{lemma}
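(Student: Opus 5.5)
Since $\hhh TS$ is one-dimensional in $\hh^1$ and spanned by the unit vector field $J(\vh)$, the horizontal mean curvature reduces to a single term:
\begin{equation*}
H^\hhh=h^\hhh(J(\vh),J(\vh))=\left\langle \nabla_{J(\vh)}\vh,J(\vh)\right\rangle .
\end{equation*}
This expression does not depend on how $\vh$ is extended off $S$, because $J(\vh)$ is tangent to $S$. The plan is to evaluate it in the left-invariant frame using \eqref{phflat} together with \Cref{jvwithresptolocalchart}.

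Write $\vh=\nu^1X+\nu^2Y$ and $J(\vh)=-\nu^2X+\nu^1Y$. By \eqref{phflat} the connection acts only on the coefficients, so $\nabla_{J(\vh)}\vh=(J(\vh)\nu^1)X+(J(\vh)\nu^2)Y$. Hence
\begin{equation*}
H^\hhh=\nu^1\,J(\vh)\nu^2-\nu^2\,J(\vh)\nu^1 .
\end{equation*}
Set $W=\sqrt{\dot t^2+x^2\dot x^2}$, which is positive on $S\setminus S_0$. Let $\phi=\phi(s)$ be the angle with $\cos\phi=\dot t/W$ and $\sin\phi=x\dot x/W$. Then \eqref{normalerotinv} reads $\nu^1=\cos(\theta+\phi)$ and $\nu^2=\sin(\theta+\phi)$. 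With $\psi=\theta+\phi$ we have $\cos\psi\,d(\sin\psi)-\sin\psi\,d(\cos\psi)=d\psi$, so the formula above collapses to
\begin{equation*}
H^\hhh=J(\vh)(\theta+\phi(s)).
\end{equation*}

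Next insert \eqref{jvinlocalcoords}, which gives $J(\vh)=-\frac{x}{W}\partial_s+\frac{\dot t}{xW}\partial_\theta$. This yields $H^\hhh=-\frac{x}{W}\dot\phi+\frac{\dot t}{xW}$. Differentiating $\phi$ as the angle of the planar vector $(\dot t,x\dot x)$ gives
\begin{equation*}
\dot\phi=\frac{\dot t\,(\dot x^2+x\ddot x)-x\dot x\,\ddot t}{W^2}.
\end{equation*}
Substituting and putting everything over the common denominator $xW^3$, the terms $\dot t x^2\dot x^2$ cancel between $\dot t W^2$ and $-x^2\dot t\dot x^2$. What remains is $\dot t^3+x^3(\dot x\ddot t-\ddot x\dot t)$, which is exactly \eqref{hotinvinlemma}.

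The only delicate point is bookkeeping rather than a real obstacle. One must make sure the orientation of $\vh$ in \eqref{normalerotinv} and the sign of $J(\vh)$ in \eqref{jvinlocalcoords} are used consistently, since flipping either one flips the sign of $H^\hhh$. The function $\phi$ is only locally defined as an angle, but only $\dot\phi$ enters, and $\dot\phi$ is given globally by the formula above because $W>0$.
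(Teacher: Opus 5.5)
Your proof is correct and follows essentially the same route as the paper. Both arguments write $H^\hhh=\langle\nabla_{J(\vh)}\vh,J(\vh)\rangle$, use \eqref{phflat} so that only the $X,Y$-coefficients of $\vh$ are differentiated, and then insert the coordinate expression \eqref{jvinlocalcoords}. The difference is only in bookkeeping: the paper expands $J(\vh)\langle\vh,X\rangle$ and $J(\vh)\langle\vh,Y\rangle$ separately, showing each equals $\langle J(\vh),X\rangle$ resp.\ $\langle J(\vh),Y\rangle$ times the right-hand side of \eqref{hotinvinlemma}, while your substitution $\vh=\cos(\theta+\phi)X+\sin(\theta+\phi)Y$ reduces everything to the single derivative $J(\vh)(\theta+\phi)$ and makes the final cancellation more transparent.
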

 \begin{proof}
     Notice that
     \begin{equation*}
         H^\hhh=\left\langle \nabla_{J(\vh)}\vh,J(\vh)\right\rangle\overset{\eqref{phflat}}{=}\left\langle J(\vh),X\right\rangle J(\vh) \left\langle \vh,X\right\rangle+\left\langle J(\vh),Y\right\rangle J(\vh) \left\langle \vh,Y\right\rangle.
     \end{equation*}
    It suffices to show that
     \begin{align}
         J(\vh) \left\langle \vh,X\right\rangle&= \left(\frac{x(s)^3  (\dot{x}(s)\ddot{t}(s)-\ddot{x}(s)\dot{t}(s))+\dot{t}(s)^3  }{x(s)\left( \dot{t}(s)^2 +x(s)^2 \dot{x}(s)^2 \right)^{3/2}}\right)\left\langle J(\vh),X\right\rangle,\label{dimmeancurv1}\\
           J(\vh) \left\langle \vh,Y\right\rangle&= \left(\frac{x(s)^3  (\dot{x}(s)\ddot{t}(s)-\ddot{x}(s)\dot{t}(s))+\dot{t}(s)^3  }{x(s)\left( \dot{t}(s)^2 +x(s)^2 \dot{x}(s)^2 \right)^{3/2}}\right)\left\langle J(\vh),Y\right\rangle.\label{dimmeancurv2}
     \end{align}
     Indeed,
\begin{equation*}
    \begin{split}
         J&(\vh) \left\langle \vh,X\right\rangle\overset{\eqref{normalerotinv},\eqref{jvinlocalcoords}}{=}\left(\frac{1}{x\sqrt{ \dot t^2+x^2\Dot x^2 }}\right)\left(-x^2\frac{\partial}{\partial s}\left(\frac{\Dot t\cos\theta-x\Dot x\sin\theta}{\sqrt{\Dot t^2+x^2\Dot x^2}}\right)+\dot t\frac{\partial}{\partial \theta}\left(\frac{\Dot t\cos\theta-x\Dot x\sin\theta}{\sqrt{\Dot t^2+x^2\Dot x^2}}\right)\right)\\
         &=\left(\frac{1}{x( \dot t^2+x^2\Dot x^2 )^2}\right)\left(-x^2\left((\ddot t\cos\theta-\dot x^2\sin\theta-x\ddot x\sin\theta)(  \dot t^2+x^2\Dot x^2 )-(\dot t\cos\theta-x\dot x\sin\theta)(\dot t\ddot t+x\dot x^3+x^2\dot x\ddot x)\right)\right)\\
         &\quad+\left(\frac{1}{x( \dot t^2+x^2\Dot x^2 )^2}\right)\left(\dot t(-\dot t\sin\theta-x\dot x\cos\theta)( \dot t^2+x^2\Dot x^2 )\right)\\
 %    &=\left(\frac{1}{x( \dot t^2+x^2\Dot x^2 )^2}\right)\left(-x^2\left(-\dot x^2\dot t^2\sin\theta-x\ddot x\dot t^2\sin\theta+x^2\dot x^2\ddot t\cos\theta-x\dot x^3\dot t\cos\theta-x^2\dot x\ddot x\dot t\cos\theta+x\dot x\dot t\ddot t \sin\theta\right)\right)\\
 %    &\quad+\left(\frac{1}{x( \dot t^2+x^2\Dot x^2 )^2}\right)\left(\dot t \left(-\dot t^3\sin\theta-x^2\dot x^2\dot t\sin\theta-x\dot x\dot t^2\cos\theta-x^3\dot x^3\cos\theta\right)\right)\\
     &=\left(\frac{1}{x( \dot t^2+x^2\Dot x^2 )^2}\right)\left(x^3\ddot x\dot t^2\sin\theta-x^4\dot x^2\ddot t\cos\theta+x^4\dot x\ddot x\dot t\cos\theta-x^3\dot x\dot t\ddot t\sin\theta-\dot t^4\sin\theta-x\dot x\dot t^3\cos\theta\right)\\
      &=\left(\frac{1}{x( \dot t^2+x^2\Dot x^2 )^2}\right)\left(-\dot t\sin\theta-x\dot x\cos\theta\right)\left(x^3(\dot x\ddot t-\ddot x\dot t)+\dot t^3\right)\\
      \overset{\eqref{jvleftinvfields}}&{=}\left\langle J(\vh),X\right\rangle\left(\frac{x^3(\dot x\ddot t-\ddot x\dot t)+\dot t^3}{\left( \dot t^2+x^2\Dot x^2 \right)^\frac{3}{2}}\right),
    \end{split}
\end{equation*}
which is \eqref{dimmeancurv1}. Finally, \eqref{dimmeancurv2} follows by a similar computation.
 \end{proof}
 \subsection{Examples} \label{subsec_exam}
 We collect some relevant instances of rotationally invariant surfaces.
 \begin{example}\label{horizontalplanewxample}(Horizontal plane)
     The horizontal plane is the rotationally invariant surface  $S=\{(x,y,t)\in \mathbb{R}^3\ |\ t=0\}$. Its profile is the curve $\gamma:(0,\infty)\to\rr^2$ given by
     \begin{equation*}
        \gamma(s)=(s,0). 
     \end{equation*}
     The horizontal plane is a simple instance of minimal surface with characteristic points, because $S_0=\{0\}$.
 \end{example}
  \begin{example}[Vertical cylinder]\label{exvertcil}
     The vertical cylinder  $S=\{(x,y,t)\in \mathbb{R}^3\ |\ x^2+y^2=1, t\in \mathbb{R}\}$ has profile $\gamma:\rr\to\rr^2$ given by
     \begin{equation*}
        \gamma(s)=(1,s). 
     \end{equation*}
     It is a non-characteristic surface with constant horizontal mean curvature $H^\hhh\equiv 1$.
 \end{example}
 \begin{example}[Pansu sphere]\label{examplepansu}
     The Pansu sphere is the closed surface with profile $\gamma:(0,2\pi)\to \rr^2$ given by 
     \begin{equation*}
         \gamma(s)=\left(\sin\left(\frac{s}{2}\right),\frac{1}{4}\left(s-\sin(s)-\pi\right)\right).
     \end{equation*}
     The Pansu sphere has two characteristic points, $S_0=\left\{\left(0,0,-\frac{\pi}{4}\right),\left(0,0,\frac{\pi}{4}\right)\right\},$ and it has constant horizontal mean curvature $H^\hhh\equiv 2$. Up to intrinsic dilations, it is the unique rotationally invariant closed surface of constant horizontal mean curvature of class $C^2$ in $\hh^1$ (cf. \cite{MR2271950}). For further convenience, we point out that the profile of the Pansu sphere can be equivalently parametrized by $\gamma:\left(-\frac{\pi}{2},\frac{\pi}{2}\right)\to\rr^2$, where
     \begin{equation*}
         \gamma(s)=\left(\cos (s),\frac{s}{2}+\frac{\sin 2s}{4}\right).
     \end{equation*}
 \end{example}
 \begin{example}[Korányi sphere]\label{ex_koranyisphere}
     The Korányi sphere is the $1$-level set of the \emph{Korányi norm}
     \begin{equation*}
         \|(x,y,t)\|\coloneqq \left(\left(x^2+y^2\right)^2+4t^2\right)^\frac{1}{4}.
     \end{equation*}
     It is a closed, rotationally invariant surface such that $S_0=\left\{\left(0,0,-\frac{1}{2}\right),\left(0,0,\frac{1}{2}\right)\right\}$, and its profile can be parametrized by $\gamma:\left(-\frac{\pi}{2},\frac{\pi}{2}\right)\to\rr^2,$ where 
     \begin{equation*}
         \gamma(s)=\left(\sqrt{\cos s},\frac{\sin s}{2}\right).
     \end{equation*}
 \end{example}
\subsection{Functionals driven by horizontal mean curvature}\label{subsecfundrivenbyhormincurv} By means of \eqref{subriemareaelementrotinv} and \eqref{hotinvinlemma}, functionals driven by the horizontal mean curvature of rotationally invariant surfaces are essentially one-dimensional.
%The easiest instance is the area functional. Namely, if $S$ is as in \Cref{secdefproprotinv} and if $\gamma=(x,t):I\to\rr^2$ is its profile, then 
Precisely, the (sub-Riemannian) area functional reads as
\begin{equation*}
\area^\hhh(S)=2\pi\int_Ix\sqrt{\dot t^2+x^2 \dot x^2}\,ds,
\end{equation*}
while the (horizontal) total mean curvature and total inverse mean curvature have the form
\begin{equation*}
\tmc^\hhh (S)=2\pi\int_I\frac{x^3  (\dot{x}\ddot{t}-\ddot{x}\dot{t})+\dot{t}^3  }{ \dot{t}^2 +x^2 \dot{x}^2 }\,\mathrm{d}s,\qquad   \timc(S)=2\pi\int_I\frac{x^2\left(\dot t^2+x^2 \dot x^2\right)^2}{x^3  (\dot{x}\ddot{t}-\ddot{x}\dot{t})+\dot{t}^3}\,ds.
\end{equation*} 
As already mentioned in the introduction, the above two quantities gain a geometric meaning provided that $S$ is, respectively, mean convex and strictly mean convex. Exploiting the particular shape of the functionals, we begin by showing that Pansu spheres are critical points neither for \eqref{minkquointro} nor for \eqref{hkqouintro}.
\begin{theorem}\label{pansunotmin}
   For any $R>0$, let $S^R$ be the rotationally invariant surface whose profile $\gamma^R:(0,2\pi)\to\rr^2$ is
   \begin{equation*}
       \gamma^R(s)=\left(R\sin\left(\frac{s}{2}\right),\frac{1}{4}\left(s-\sin s-\pi\right)\right).
   \end{equation*}
   Then $S^R$ is a closed, strictly mean convex surface. Moreover,
   \begin{equation}\label{deriunopansu}
      \rifun^\hhh_\mathrm{mink}(S^1)=2\pi^\frac{2}{3},\qquad \left.\frac{d}{dR}\right|_{R=1}\left(\rifun^\hhh_\mathrm{mink}(S^R)\right)=\pi^\frac{2}{3}
   \end{equation}
   and 
   \begin{equation}\label{deriduepansu}
        \rifun^\hhh_\mathrm{hk}(S^1)=\frac{4}{3},\qquad\left.\frac{d}{dR}\right|_{R=1}\left(\rifun^\hhh_\mathrm{hk}(S^R)\right)=-\frac{2}{3}.
   \end{equation}
   In particular, the Pansu sphere is not a minimum of \eqref{minkquointro} and \eqref{hkqouintro}.
\end{theorem}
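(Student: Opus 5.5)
The plan is a direct computation. Every quantity in \eqref{minkquointro} and \eqref{hkqouintro} reduces, through the one-dimensional formulas of \Cref{subsecfundrivenbyhormincurv}, to an explicit integral over the profile $\gamma^R$. We then differentiate these integrals in $R$ under the integral sign.

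\emph{Step 1: profile quantities.} Substituting $u=s/2\in(0,\pi)$, we have $ds=2\,du$ and
\begin{equation*}
x=R\sin u,\quad \dot x=\tfrac{R}{2}\cos u,\quad \ddot x=-\tfrac{R}{4}\sin u,\quad \dot t=\tfrac12\sin^2u,\quad \ddot t=\tfrac12\sin u\cos u.
\end{equation*}
Hence
\begin{equation*}
\dot t^2+x^2\dot x^2=\tfrac14\sin^2u\left(\sin^2u+R^4\cos^2u\right)
\end{equation*}
and
\begin{equation*}
x^3(\dot x\ddot t-\ddot x\dot t)+\dot t^3=\tfrac18\sin^4u\left(R^4(1+\cos^2u)+\sin^2u\right).
\end{equation*}
The second expression is strictly positive for $u\in(0,\pi)$. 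By \Cref{lemmaespressionecurvaturarotinv}, $H^\hhh$ is therefore strictly positive on $S^R\setminus S_0$. Closedness and embeddedness hold because $x(s)\to0$ at both endpoints, $t$ is strictly increasing, and the behaviour near the two axis points matches that of the Pansu sphere, where $R$ only rescales $x$.

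\emph{Step 2: the four functionals.} Using \eqref{subriemareaelementrotinv}, \eqref{volumewhendottgeqzero} (valid since $\dot t\geq0$) and the formulas for $\tmc^\hhh$ and $\timc$, we obtain
\begin{equation*}
\area^\hhh(S^R)=2\pi R\int_0^\pi \sin^2u\sqrt{\sin^2u+R^4\cos^2u}\,du,\qquad |\Om(S^R)|=\pi R^2\int_0^\pi\sin^4u\,du=\tfrac{3\pi^2}{8}R^2,
\end{equation*}
\begin{equation*}
\tmc^\hhh(S^R)=2\pi\int_0^\pi \frac{\sin^2u\left(R^4(1+\cos^2u)+\sin^2u\right)}{\sin^2u+R^4\cos^2u}\,du.
\end{equation*}
The integral $\timc(S^R)$ is written analogously as the integral of the reciprocal-type integrand. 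At $R=1$ all square roots and quotients collapse, and we get
\begin{equation*}
\area^\hhh(S^1)=\pi^2,\qquad \tmc^\hhh(S^1)=2\pi^2,\qquad |\Om(S^1)|=\tfrac{3\pi^2}{8}.
\end{equation*}
Since $H^\hhh\equiv2$ on the Pansu sphere, also $\timc(S^1)=\pi^2/2$. These give $\rifun^\hhh_{\mathrm{mink}}(S^1)=2\pi^{2/3}$ and $\rifun^\hhh_{\mathrm{hk}}(S^1)=\tfrac43$.

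\emph{Step 3: derivatives at $R=1$.} We differentiate under the integral sign and use the logarithmic-derivative identities
\begin{equation*}
\frac{\dot{\rifun}^\hhh_{\mathrm{mink}}}{\rifun^\hhh_{\mathrm{mink}}}=\frac{\dot\tmc^\hhh}{\tmc^\hhh}-\frac23\frac{\dot\area^\hhh}{\area^\hhh},\qquad \frac{\dot{\rifun}^\hhh_{\mathrm{hk}}}{\rifun^\hhh_{\mathrm{hk}}}=\frac{\dot\timc}{\timc}-\frac{d_R|\Om|}{|\Om|},
\end{equation*}
where the dot denotes $\frac{d}{dR}$. Writing $w=R^4$, the $w$-derivative at $w=1$ of the $\tmc^\hhh$ integrand is $\tfrac12\sin^4u$. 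Multiplying by $dw/dR=4$ gives
\begin{equation*}
\dot\tmc^\hhh(S^1)=2\pi\int_0^\pi 2\sin^4u\,du=3\pi^2.
\end{equation*}
Similarly,
\begin{equation*}
\dot\area^\hhh(S^1)=2\pi\left(\tfrac\pi2+2\int_0^\pi\sin^2u\cos^2u\,du\right)=\tfrac{3\pi^2}{2},
\end{equation*}
so that $\dot{\rifun}^\hhh_{\mathrm{mink}}=2\pi^{2/3}\left(\tfrac32-1\right)=\pi^{2/3}$, which is \eqref{deriunopansu}. For \eqref{deriduepansu} we have $d_R|\Om|/|\Om|=2$ at $R=1$. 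It therefore remains to show that $\dot\timc(S^1)/\timc(S^1)=\tfrac32$, equivalently $\dot\timc(S^1)=\tfrac{3\pi^2}{4}$. This is again a trigonometric integral in powers of $\sin u,\cos u$, obtained by differentiating the explicit integrand in $w$.

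\emph{Conclusion.} Both quotients are invariant under intrinsic dilations. Since $R\mapsto S^R$ passes through the Pansu sphere at $R=1$ with a nonzero derivative, decreasing $R$ slightly (for $\rifun^\hhh_{\mathrm{mink}}$) or increasing it (for $\rifun^\hhh_{\mathrm{hk}}$) lowers the quotient. Hence the Pansu sphere is not a minimizer. The main obstacle is the $\timc$ derivative, since its integrand is the messiest. The endpoint behaviour needed to justify differentiation under the integral sign is harmless, because all integrands are bounded smooth functions of $(u,R)$ for $R$ near $1$.
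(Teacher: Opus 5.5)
Your route is the same as the paper's. You compute $\dot x,\dot t,\ddot x,\ddot t$, reduce $\area^\hhh$, $\tmc^\hhh$, $\timc$ and $|\Om|$ to one-dimensional integrals, read off $H^\hhh>0$ from the sign of $x^3(\dot x\ddot t-\ddot x\dot t)+\dot t^3$, and differentiate under the integral sign using a uniform lower bound on the denominators. The substitution $u=s/2$ and the logarithmic derivatives are only cosmetic, and your Step 1--2 formulas and values at $R=1$ agree with the paper's. Two points still need fixing.

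First, your $\tmc^\hhh$-derivative line is internally inconsistent. For the $du$-integrand you wrote in Step 2, the $w$-derivative is $\sin^4u/(\sin^2u+w\cos^2u)^2$, which equals $\sin^4u$ at $w=1$. The factor $\tfrac12$ belongs to the $ds$-integrand. The correct line is therefore $2\pi\int_0^\pi 4\sin^4u\,du=3\pi^2$. As written, $2\pi\int_0^\pi 2\sin^4u\,du=\tfrac{3\pi^2}{2}$, which would even give the wrong value $-\tfrac12\pi^{2/3}$ for the Minkowski derivative.

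Second, the value $-\tfrac23$ for the Heintze--Karcher derivative is only reduced to the claim $\frac{d}{dR}\timc(S^R)\big|_{R=1}=\tfrac{3\pi^2}{4}$, which you never verify. Moreover, that integrand is not a function of $w=R^4$ alone. In the $u$ variable, $\timc(S^R)=2\pi\int_0^\pi R^2\,\frac{\sin^2u\,(\sin^2u+R^4\cos^2u)^2}{R^4(1+\cos^2u)+\sin^2u}\,du$, so the factor $R^2$ coming from $x^2$ must be differentiated as well. The $R$-derivative of the integrand at $R=1$ is $2\cdot\tfrac12\sin^2u+4\cdot\tfrac14\sin^2u\,(3\cos^2u-1)=3\sin^2u\cos^2u$. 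Hence $2\pi\int_0^\pi 3\sin^2u\cos^2u\,du=\tfrac{3\pi^2}{4}$, as required. With these two corrections your argument coincides with the paper's proof.
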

\begin{proof}
By definition,
\begin{equation*}
    \Dot x=\frac{R}{2}\cos \left(\frac{s}{2}\right),\qquad\Dot t=\frac{1}{2}\sin^2\left(\frac{s}{2}\right),\qquad\Ddot x=-\frac{R}{4}\sin \left(\frac{s}{2}\right),\qquad\Ddot t=\frac{1}{2}\sin\left(\frac{s}{2}\right)\cos\left(\frac{s}{2}\right).
\end{equation*}
Notice that
\begin{equation*}
    \dot x\ddot t-\ddot x\dot t
    %=\frac{R}{4}\sin\left(\frac{s}{2}\right)\cos^2\left(\frac{s}{2}\right)+\frac{R}{8}\sin^3\left(\frac{s}{2}\right)
    =\frac{R}{8}\sin\left(\frac{s}{2}\right)\left(1+\cos^2\left(\frac{s}{2}\right)\right),\qquad  \dot t^2+x^2\Dot x^2 =\frac{R^4}{4}\sin^2\left(\frac{s}{2}\right)\cos^2\left(\frac{s}{2}\right)+\frac{1}{4}\sin^4\left(\frac{s}{2}\right).
\end{equation*}
%and
%\begin{equation*}
%     \dot t^2+x^2\Dot x^2 =\frac{R^4}{4}\sin^2\left(\frac{s}{2}\right)\cos^2\left(\frac{s}{2}\right)+\frac{1}{4}\sin^4\left(\frac{s}{2}\right).
%\end{equation*}
Then
\begin{equation*}
    d\sigma^\hhh=\frac{R}{2}\sin^2\left(\frac{s}{2}\right)\sqrt{R^4\cos^2\left(\frac{s}{2}\right)+\sin^2\left(\frac{s}{2}\right)},\qquad  H^\hhh=\frac{ R^4\left(1+\cos^2\left(\frac{s}{2}\right)\right)+\sin^2\left(\frac{s}{2}\right)}{R\left(R^4\cos^2\left(\frac{s}{2}\right)+\sin^2\left(\frac{s}{2}\right)\right)^{\frac{3}{2}}}.
\end{equation*}
By the above computation, $S^R$ is strictly mean convex, and moreover
\begin{align*}
\tmc^\hhh(S^R)=\pi\int_{0}^{2\pi}\frac{R^4 \sin^2\left(\frac{s}{2}\right)\left(1+\cos^2\left(\frac{s}{2}\right)\right)+\sin^4\left(\frac{s}{2}\right)}{R^4\cos^2\left(\frac{s}{2}\right)+\sin^2\left(\frac{s}{2}\right)}\,\mathrm{d}s.
\end{align*}
and 
\begin{equation*}
\area^\hhh(S^R)=\pi\int_0^{2\pi} R\sin^2\left(\frac{s}{2}\right)\sqrt{R^4\cos^2\left(\frac{s}{2}\right)+\sin^2\left(\frac{s}{2}\right)}\,\mathrm{d}s
\end{equation*}
Set 
\begin{equation*}
    g(R,s)=\frac{R^4 \sin^2\left(\frac{s}{2}\right)\left(1+\cos^2\left(\frac{s}{2}\right)\right)+\sin^4\left(\frac{s}{2}\right)}{R^4\cos^2\left(\frac{s}{2}\right)+\sin^2\left(\frac{s}{2}\right)},\qquad h(R,s)=R\sin^2\left(\frac{s}{2}\right)\sqrt{R^4\cos^2\left(\frac{s}{2}\right)+\sin^2\left(\frac{s}{2}\right)}.
\end{equation*}
Since 
\begin{equation}\label{boundbelowpansuproof}
    R^4\cos^2\left(\frac{s}{2}\right)+\sin^2\left(\frac{s}{2}\right)\geq\frac{1}{2}
\end{equation}
for any $R$ sufficiently close to $1$, then
\begin{equation*}
    \left.\frac{d}{dR} \area^\hhh(S^R)\right|_{R=1}=\pi\int_{0}^{2\pi}\left.\frac{\partial h(R,s)}{\partial R}\right|_{R=1}\,ds=\pi\int_{0}^{2\pi}\left(\sin^2\left(\frac{s}{2}\right)+\frac{1}{2}\sin^2 s\right)\,ds=\frac{3}{2}\pi^2
\end{equation*}
and 
\begin{equation*}
    \left.\frac{d}{d R} \tmc^\hhh(S^R)\right|_{R=1}=\pi\int_{0}^{2\pi}\left.\frac{\partial g(R,s)}{\partial R}\right|_{R=1}\,ds=4\pi\int_0^{2\pi}\sin^4\left(\frac{s}{2}\right)\,ds=3\pi^2.
\end{equation*}
Moreover,
\begin{equation*}
    \area^\hhh(S^1)=\pi\int_0^{2\pi}\sin^2\left(\frac{s}{2}\right)\,ds=\pi^2,\qquad\tmc^\hhh(S^1)=2\pi\int_0^{2\pi}\sin^2\left(\frac{s}{2}\right)\,ds=2\pi^2.
\end{equation*}
By the above computations,  \eqref{deriunopansu} follows. We prove \eqref{deriduepansu}. Indeed, 
\begin{equation*}
    |\Om(S^R)|\overset{\eqref{volumewhendottgeqzero}}{=}\frac{\pi R^2}{2}\int_0^{2\pi}\sin^4\left(\frac{s}{2}\right)\,ds=\frac{3\pi^2 R^2}{8},\qquad|\Om(S^1)|=\frac{3}{8}\pi^2\qquad\left.\frac{d}{dR}|\Om(S^R)|\right|_{R=1}=\frac{3}{4}\pi^2
\end{equation*}
and
\begin{equation*}
    \timc(S^R)=\pi\int_0^{2\pi} \frac{\sin^2\left(\frac{s}{2}\right)\left(R^5 \cos^2\left(\frac{s}{2}\right)+R\sin^2\left(\frac{s}{2}\right)\right)^2}{R^4\left(1+\cos^2\left(\frac{s}{2}\right)\right)+\sin^2\left(\frac{s}{2}\right)}\,ds,\qquad \timc(S^1)=
    %\frac{\pi}{2}\int_0^{2\pi}\sin^2\left(\frac{s}{2}\right)\,ds=
    \frac{1}{2}\pi^2.
\end{equation*}
Set 
\begin{equation*}
    g(R,s)=\frac{\sin^2\left(\frac{s}{2}\right)\left(R^5 \cos^2\left(\frac{s}{2}\right)+R\sin^2\left(\frac{s}{2}\right)\right)^2}{R^4\left(1+\cos^2\left(\frac{s}{2}\right)\right)+\sin^2\left(\frac{s}{2}\right)}.
\end{equation*}
By \eqref{boundbelowpansuproof}, we infer that
\begin{equation*}
      \left.\frac{d}{d R} \timc(S^R)\right|_{R=1}=\pi\int_{0}^{2\pi}\left.\frac{\partial g(R,s)}{\partial R}\right|_{R=1}\,ds=\frac{3}{4}\pi\int_0^{2\pi}\sin^2 s\,ds=\frac{3}{4}\pi^2.
\end{equation*}
Combining the above computations, \eqref{deriduepansu} follows.
\end{proof}
The same conclusions of \Cref{pansunotmin} hold as well for the Korányi sphere. 
%To state the next result, we recall that the \emph{gamma function} (cf. \cite{MR167642}) is defined by 
%\begin{equation*}
%    \Gamma(p)\coloneqq\int_0^\infty s^{p-1}e^{-s}\,ds,\qquad p>0.
%\end{equation*}
%
\begin{theorem}\label{Koranyinotmin}
  Fix $R>0$. Let $S^R$ be the rotationally invariant surface whose profile $\gamma_R:\left(-\frac{\pi}{2},\frac{\pi}{2}\right)\to\rr^2$ is
   \begin{equation*}
       \gamma^R(s)=\left(R\sqrt{\cos s},\frac{1}{2}\sin s\right).
   \end{equation*}
   Then $S^R$ is a closed, strictly mean convex surface. Moreover,
   \begin{equation}\label{deriunokor}
      \rifun^\hhh_{\mathrm{mink}}(S^1)=6\left(\frac{4\Gamma\left(\frac{3}{4}\right)}{\Gamma\left(\frac{1}{4}\right)}\right)^{-\frac{2}{3}},\qquad \left.\frac{d}{dR}\right|_{R=1}\rifun^\hhh_{\mathrm{mink}}(S^R)=\frac{52}{15}\left(\frac{4\Gamma\left(\frac{3}{4}\right)}{\Gamma\left(\frac{1}{4}\right)}\right)^{-\frac{2}{3}},
   \end{equation}
   where $\Gamma$ is the well-known \emph{gamma function} (cf. \cite{MR167642}), and 
   \begin{equation}\label{deriduekor}
        \rifun^\hhh_{\mathrm{hk}}(S^1)=\frac{4}{3},\qquad\left.\frac{d}{dR}\right|_{R=1}\rifun^\hhh_{\mathrm{hk}}(S^R)=\frac{8}{9}.
   \end{equation}
   In particular, the Korányi sphere is not a minimum of \eqref{minkquointro} and \eqref{hkqouintro}.
\end{theorem}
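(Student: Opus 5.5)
The plan follows the proof of \Cref{pansunotmin} verbatim: reduce every quantity to a one-dimensional integral in $s$ through \eqref{subriemareaelementrotinv}, \eqref{hotinvinlemma} and \eqref{volumewhendottgeqzero}, differentiate under the integral sign at $R=1$, and evaluate the resulting trigonometric integrals.

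\textbf{Closedness and mean convexity.} First, $S^R$ is the image of the Korányi sphere under the Euclidean linear map $(x,y,t)\mapsto(Rx,Ry,t)$. Hence it is a smooth, closed, embedded, rotationally invariant surface whose only characteristic points are the poles $(0,0,\pm\frac12)$.

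For the profile $x=R\sqrt{\cos s}$, $t=\frac12\sin s$ one computes
\begin{equation*}
\dot x=-\frac{R\sin s}{2\sqrt{\cos s}},\qquad \dot t=\frac{\cos s}{2},\qquad \ddot x=-\frac{R(1+\cos^2 s)}{4\cos^{3/2}s},\qquad \ddot t=-\frac{\sin s}{2}.
\end{equation*}
This gives
\begin{equation*}
\dot x\ddot t-\ddot x\dot t=\frac{R(2+\sin^2 s)}{8\sqrt{\cos s}},\qquad \dot t^2+x^2\dot x^2=\frac{R^4\sin^2 s+\cos^2 s}{4}.
\end{equation*}
Consequently the numerator in \eqref{hotinvinlemma} equals $\frac18\cos s\,\big(R^4(2+\sin^2 s)+\cos^2 s\big)>0$ on $\left(-\frac\pi2,\frac\pi2\right)$, so $S^R$ is strictly mean convex.

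\textbf{The four functionals.} The formulas of \Cref{subsecfundrivenbyhormincurv} and \eqref{volumewhendottgeqzero} (note $\dot t\ge0$) then read
\begin{align*}
\area^\hhh(S^R)&=\pi R\int_{-\pi/2}^{\pi/2}\sqrt{\cos s}\,\sqrt{R^4\sin^2 s+\cos^2 s}\,ds,\\
\tmc^\hhh(S^R)&=\pi\int_{-\pi/2}^{\pi/2}\frac{\cos s\,\big(R^4(2+\sin^2 s)+\cos^2 s\big)}{R^4\sin^2 s+\cos^2 s}\,ds,\\
|\Om(S^R)|&=\frac{\pi R^2}{2}\int_{-\pi/2}^{\pi/2}\cos^2 s\,ds=\frac{\pi^2R^2}{4}.
\end{align*}
The total inverse mean curvature $\timc(S^R)$ is given by the analogous quotient $2\pi\int x^2(\dot t^2+x^2\dot x^2)^2/\big(x^3(\dot x\ddot t-\ddot x\dot t)+\dot t^3\big)\,ds$. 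Since $R^4\sin^2 s+\cos^2 s\ge\frac12$ for $R$ near $1$, all integrands are smooth in $R$ with uniformly bounded $R$-derivatives, so differentiation under the integral sign is justified exactly as in \Cref{pansunotmin}.

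\textbf{Values at $R=1$.} Setting $R=1$ gives
\begin{equation*}
\tmc^\hhh(S^1)=3\pi\int\cos s\,ds=6\pi,\qquad \area^\hhh(S^1)=\pi\int_{-\pi/2}^{\pi/2}\sqrt{\cos s}\,ds,\qquad \timc(S^1)=\frac{\pi}{3}\int_{-\pi/2}^{\pi/2}\cos^2 s\,ds=\frac{\pi^2}{6}.
\end{equation*}
In particular $\rifun^\hhh_{\mathrm{hk}}(S^1)=\frac{\pi^2/6}{\pi^2/4}=\frac23$, and it must be reconciled with the stated value $\frac43$. I will recheck the normalization of $\timc$ and of $d\sigma^\hhh$ here; a factor $2$ discrepancy is the first thing to track down, since \Cref{pansunotmin} gives $\frac43$ for the Pansu sphere under the same conventions.

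The derivatives $\partial_R$ at $R=1$ produce integrands that are polynomials in $\sin s,\cos s$ times powers $\cos^{p}s$ with $p\in\{\frac12,\frac32,\frac52,1,3,\dots\}$, possibly divided by $\sin^2+\cos^2=1$. Each is reduced with $\sin^2=1-\cos^2$ to the Beta-type integrals
\begin{equation*}
\int_{-\pi/2}^{\pi/2}\cos^p s\,ds=\frac{\sqrt\pi\,\Gamma\left(\frac{p+1}{2}\right)}{\Gamma\left(\frac p2+1\right)}.
\end{equation*}
The half-integer exponents all collapse, via $\Gamma(x+1)=x\Gamma(x)$ and the reflection formula $\Gamma\left(\frac14\right)\Gamma\left(\frac34\right)=\pi\sqrt2$, to rational multiples of the single constant $\int\sqrt{\cos s}\,ds$. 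That constant is expressible through $\frac{4\Gamma(3/4)}{\Gamma(1/4)}$, which explains the form of \eqref{deriunokor}.

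\textbf{Assembling the quotient derivatives.} Finally I combine via
\begin{equation*}
\frac{d}{dR}\rifun^\hhh_{\mathrm{mink}}=\rifun^\hhh_{\mathrm{mink}}\left(\frac{\tmc'}{\tmc}-\frac23\frac{\area'}{\area}\right),\qquad \frac{d}{dR}\rifun^\hhh_{\mathrm{hk}}=\rifun^\hhh_{\mathrm{hk}}\left(\frac{\timc'}{\timc}-\frac{|\Om|'}{|\Om|}\right).
\end{equation*}
Both derivatives are nonzero, so $S^1$ is not critical, and the final claim follows.

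The main obstacle is purely computational. It consists in carrying out the $R$-derivative of the $\timc$ and $\area^\hhh$ integrands without error and in reducing the half-integer Gamma values to the single constant appearing in \eqref{deriunokor}. I would organize this by writing every term as $c_p\int\cos^p s\,ds$ and tabulating the ratios $\int\cos^{p+2}/\int\cos^p=\frac{p+1}{p+2}$.
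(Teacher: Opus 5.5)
Your strategy is the paper's: reduce to one-dimensional integrals, differentiate under the integral sign at $R=1$, and evaluate via Beta/Gamma functions. Your formulas for $\dot x$, $\ddot x$, the numerator of $H^\hhh$, $\tmc^\hhh(S^1)=6\pi$, $\area^\hhh(S^1)$ and $|\Om(S^R)|$ are correct. As written, however, the proposal does not prove the statement, for two concrete reasons.

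First, your value $\timc(S^1)=\frac{\pi^2}{6}$ is an arithmetic slip, not a normalization problem, and you leave the resulting contradiction with the statement unresolved. Insert your own correct expressions $\dot t^2+x^2\dot x^2=\frac14(R^4\sin^2 s+\cos^2 s)$ and $x^3(\dot x\ddot t-\ddot x\dot t)+\dot t^3=\frac18\cos s\,(R^4(2+\sin^2 s)+\cos^2 s)$, together with $x^2=R^2\cos s$. The factor $\cos s$ cancels, and
\begin{equation*}
\timc(S^R)=\pi\int_{-\frac{\pi}{2}}^{\frac{\pi}{2}}\frac{\left(R^5\sin^2 s+R\cos^2 s\right)^2}{R^4\left(2+\sin^2 s\right)+\cos^2 s}\,ds .
\end{equation*}
At $R=1$ the integrand is the constant $\frac13$, with no factor $\cos^2 s$. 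Hence $\timc(S^1)=\frac{\pi^2}{3}$ and $\rifun^\hhh_{\mathrm{hk}}(S^1)=\frac43$, as claimed and as your Pansu cross-check suggested.

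Second, and more importantly, you never compute any of the $R$-derivatives. Yet they are precisely the content of \eqref{deriunokor} and \eqref{deriduekor}, and ``both derivatives are nonzero'' is the only thing the non-minimality conclusion rests on. The missing computations are as follows.
\begin{itemize}
\item The $R$-derivative at $R=1$ of $\frac{\cos s\,(R^4(2+\sin^2 s)+\cos^2 s)}{R^4\sin^2 s+\cos^2 s}$ is $8\cos^3 s$, giving $\left.\frac{d}{dR}\right|_{R=1}\tmc^\hhh(S^R)=\frac{32}{3}\pi$.
\item The area integrand gives $\pi\sqrt{\cos s}\,(1+2\sin^2 s)=\pi\sqrt{\cos s}\,(3-2\cos^2 s)$. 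Your ratio $\int\cos^{5/2}s\,ds/\int\cos^{1/2}s\,ds=\frac35$ then yields $\frac95\area^\hhh(S^1)$, where $\area^\hhh(S^1)=\pi B(\frac12,\frac34)=4\pi^{3/2}\Gamma(\frac34)/\Gamma(\frac14)$.
\item The $\timc$ integrand above gives $\frac{\pi}{9}(20\sin^2 s-2)$, hence $\frac89\pi^2$.
\item Finally, $\left.\frac{d}{dR}\right|_{R=1}|\Om(S^R)|=\frac{\pi^2}{2}$.
\end{itemize}
Your logarithmic-derivative formulas then give $6\left(\frac{4\Gamma(3/4)}{\Gamma(1/4)}\right)^{-2/3}\left(\frac{16}{9}-\frac65\right)=\frac{52}{15}\left(\frac{4\Gamma(3/4)}{\Gamma(1/4)}\right)^{-2/3}$ and $\frac43\left(\frac83-2\right)=\frac89$. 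Your Wallis-type recurrence is a perfectly good substitute for the paper's explicit evaluation of $B(\frac32,\frac34)$. Until these integrals are actually carried out, though, the proposal is a plan rather than a proof.
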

\begin{proof}
    By definition,
    \begin{equation*}
        \dot x=-\frac{R^2\sin s}{2x},\qquad\dot t=\frac{1}{2}\cos s,\qquad\ddot x=\frac{-R^2 x\cos s+R^2\dot x\sin s }{2x^2},\qquad\ddot t=-\frac{1}{2}\sin s.
    \end{equation*}
    Then
    \begin{equation*}
        \dot x\ddot t-\ddot x\dot t%=\frac{R^2\sin^2 s}{4x}+\frac{R^2\cos^2 s}{4x}-\frac{R^2\dot x\sin s\cos s}{4x^2}=\frac{R^2}{4x}\left(1+\frac{R^2\sin^2 s\cos s}{2x^2}\right)
        =\frac{R^2}{8x}\left(2+\sin^2 s\right),\qquad    \dot t^2+x^2\Dot x^2 =\frac{1}{4}\left(R^4\sin^2 s+\cos^2s\right).
    \end{equation*}
  %  and 
 %   \begin{equation*}
    %     \dot t^2+x^2\Dot x^2 =\frac{1}{4}\left(R^4\sin^2 s+\cos^2s\right).
  %  \end{equation*}
    Therefore
    \begin{equation*}
       d\sigma^\hhh=\frac{R}{2}\sqrt{\cos s}\sqrt{R^4\sin^2 s+\cos^2s},\qquad H^\hhh=\frac{\sqrt{\cos s}\left(R^4\left(2+\sin^2 s\right)+\cos^2 s\right)}{R\left(R^4\sin^2 s+\cos^2 s\right)^\frac{3}{2}}.
    \end{equation*}
    In particular, $S^R$ is strictly mean convex. Moreover,
    \begin{align*}
\tmc^\hhh(S^R)=\pi\int_{-\frac{\pi}{2}}^{\frac{\pi}{2}}\frac{\cos s\left(R^4\left(2+\sin^2 s\right)+\cos^2 s\right)}{R^4\sin^2 s+\cos^2 s}\,\mathrm{d}s
\end{align*}
and 
\begin{equation*}
\area^\hhh(S^R)=\pi\int_{-\frac{\pi}{2}}^{\frac{\pi}{2}}R\sqrt{\cos s}\sqrt{R^4\sin^2 s+\cos^2s} \,\mathrm{d}s.
\end{equation*}
Set 
\begin{equation*}
    g(R,s)=\frac{\cos s\left(R^4\left(2+\sin^2 s\right)+\cos^2 s\right)}{R^4\sin^2 s+\cos^2 s},\qquad h(R,s)=R\sqrt{\cos s}\sqrt{R^4\sin^2 s+\cos^2s}.
\end{equation*}
Since 
\begin{equation}\label{boundbelowkoranyproof}
    R^4\sin^2s+\cos^2s\geq\frac{1}{2}
\end{equation}
for any $R$ sufficiently close to $1$, then
\begin{equation*}
    \tmc^\hhh(S^1)=3\pi\int_{-\frac{\pi}{2}}^{\frac{\pi}{2}}\cos s\,\mathrm{d}s=6\pi
\end{equation*}
and 
\begin{equation*}
   \left.\frac{d}{d R} \tmc^\hhh(S^R)\right|_{R=1}=\pi\int_{-\frac{\pi}{2}}^{\frac{\pi}{2}}\left.\frac{\partial g(R,s)}{\partial R}\right|_{R=1}\,ds=8\pi\int_{-\frac{\pi}{2}}^{\frac{\pi}{2}}\cos^3 s\,ds=\frac{32}{3}\pi.
\end{equation*}
In order to deal with the area functional, we recall (cf. \cite{MR167642}) that the \emph{beta function} is defined by
\begin{equation*}
    B(p,q)=2\int_0^{\frac{\pi}{2}}\left(\sin s\right)^{2p-1}(\cos s)^{2q-1}\,ds,\qquad p,q\in\rr,\,p,q>0,
\end{equation*}
and is related to the gamma function by the identity
\begin{equation}\label{betagamma}
    B(p,q)=\frac{\Gamma(p)\Gamma(q)}{\Gamma(p+q)}.
\end{equation}
%.....and satisfies the recursive property
Therefore, recalling (cf. \cite{MR167642}) that
\begin{equation}\label{gammafattoriale}
   \Gamma(p+1)=p\Gamma(p),\qquad\Gamma\left(\frac{1}{2}\right)=\sqrt{\pi},
\end{equation}
then
\begin{equation*}
    \begin{split}
        \area^\hhh (S^1)=\pi B\left(\frac{1}{2},\frac{3}{4}\right)\overset{\eqref{betagamma}}{=}\pi\left(\frac{\Gamma\left(\frac{1}{2}\right)\Gamma\left(\frac{3}{4}\right)}{\Gamma\left(\frac{5}{4}\right)}\right)\overset{\eqref{gammafattoriale}}{=}4\pi^\frac{3}{2}\left(\frac{\Gamma\left(\frac{3}{4}\right)}{\Gamma\left(\frac{1}{4}\right)}\right).
    \end{split}
\end{equation*}
Moreover, since
\begin{equation*}
     \left.\frac{d}{dR} \area^\hhh(S^R)\right|_{R=1}=\pi\int_{-\frac{\pi}{2}}^{\frac{\pi}{2}}\left.\frac{\partial h(R,s)}{\partial R}\right|_{R=1}\,ds=\pi\int_{-\frac{\pi}{2}}^{\frac{\pi}{2}}\sqrt{\cos s}\left(1+ 2\sin^2s\right)\,ds,
\end{equation*}
then
\begin{equation*}
      \left.\frac{d}{dR} \area^\hhh(S^R)\right|_{R=1}=4\pi^\frac{3}{2}\left(\frac{\Gamma\left(\frac{3}{4}\right)}{\Gamma\left(\frac{1}{4}\right)}\right)+2\pi B\left(\frac{3}{2},\frac{3}{4}\right)\overset{\eqref{betagamma},\eqref{gammafattoriale}}{=}
      %4\pi^\frac{3}{2}\left(\frac{\Gamma\left(\frac{3}{4}\right)}{\Gamma\left(\frac{1}{4}\right)}\right)+2\pi\left(\frac{\Gamma\left(\frac{3}{2}\right)\Gamma\left(\frac{3}{4}\right)}{\Gamma\left(\frac{9}{4}\right)}\right)\overset{\eqref{gammafattoriale}}{=}
      \frac{36}{5}\pi^\frac{3}{2}\left(\frac{\Gamma\left(\frac{3}{4}\right)}{\Gamma\left(\frac{1}{4}\right)}\right).
\end{equation*}
Combining the above computations, \eqref{deriunokor} follows. We prove \eqref{deriduekor}. Indeed,
\begin{equation*}
    |\Om(S^R)|=\frac{\pi R^2}{2}\int_{-\frac{\pi}{2}}^\frac{\pi}{2}\cos^2 s\,ds=\frac{\pi^2 R^2}{4},\qquad  |\Om(S^1)|=\frac{\pi^2}{4},\qquad\left.\frac{d}{dR}|\Om(S^R)|\right|_{R=1}=\frac{\pi^2}{2}.
\end{equation*}
Moreover,
\begin{equation*}
    \timc(S^R)=\pi\int_{-\frac{\pi}{2}}^\frac{\pi}{2}\frac{\left(R^5\sin^2 s+R\cos^2 s\right)^2}{\left(R^4\left(2+\sin^2 s\right)+\cos^2 s\right)}\,ds,\qquad\timc(S^1)=\frac{1}{3}\pi^2.
\end{equation*}
Set 
\begin{equation*}
    g(R,s)=\frac{\left(R^5\sin^2 s+R\cos^2 s\right)^2}{\left(R^4\left(2+\sin^2 s\right)+\cos^2 s\right)}.
\end{equation*}
By \eqref{boundbelowkoranyproof}, we conclude that 
\begin{equation*}
      \left.\frac{d}{d R} \timc(S^R)\right|_{R=1}=\pi\int_{-\frac{\pi}{2}}^\frac{\pi}{2}\left.\frac{\partial g(R,s)}{\partial R}\right|_{R=1}\,ds=\pi\int_{-\frac{\pi}{2}}^\frac{\pi}{2}\frac{20 \sin^2 s-2}{9}\,ds=\frac{8}{9}\pi^2.
\end{equation*}
Combining the above computations, \eqref{deriduekor} follows.
\end{proof}
\begin{remark}\label{remarkpunticriticipertuttiiproblemimacaratteristicipero}
    By \Cref{pansunotmin} and \Cref{Koranyinotmin}, neither the Pansu sphere nor the Korányi sphere are critical points of $\rifun^\hhh_{\mathrm{mink}}$ and $\rifun^\hhh_{\mathrm{hk}}$. It is simple to exploit the previous constructions to show that, in addition, they are not critical points neither for \eqref{optiprobuno} nor for \eqref{optiprobdue}. We prove this for the Pansu sphere, the Korányi case being analogous. To this end, we construct two families of competitors preserving respectively the area and the enclosed volume of the Pansu sphere. Let $\left(S^R\right)_{R>0}$ be as in the proof of \Cref{pansunotmin}. 
    Set 
    \begin{equation*}
        \beta(R)=\left(\frac{\sigma^\hhh(S^1)}{\sigma^\hhh(S^R)}\right)^\frac{1}{3},\qquad \gamma(R)=\left(\frac{|\Om(S^1)|}{|\Om(S^R)|}\right)^\frac{1}{4},\qquad R>0.
    \end{equation*}
    Notice that $\beta(1)=\gamma(1)=1$. Moreover,
    \begin{equation*}
        \sigma^\hhh\left(\delta_{\beta(R)}\left(S^R\right)\right)=\beta(R)^3\sigma^\hhh\left(S^R\right)=\sigma^\hhh\left(S^1\right)
    \end{equation*}
    and
    \begin{equation*}
        \left.\frac{\partial}{\partial R}\right|_{R=1}\tmc^\hhh\left(\delta_{\beta(R)}\left(S^R\right)\right)=\left.\frac{\partial}{\partial R}\right|_{R=1}\left(\beta(R)^2\tmc^\hhh\left(S^R\right)\right)=\sigma^\hhh\left(S^1\right)^\frac{2}{3}\left.\frac{\partial}{\partial R}\right|_{R=1}\rifun^\hhh_\mathrm{mink}\left(S^R\right)\neq 0.
    \end{equation*}
    Similarly,
    \begin{equation*}
        \left|\Om\left(\delta_{\beta(R)}\left(S^R\right)\right)\right|=\left|\Om\left(S^1\right)\right|,\qquad \left.\frac{\partial}{\partial R}\right|_{R=1}\timc\left(\delta_{\beta(R)}\left(S^R\right)\right)=\left|\left(S^1\right)\right|\left.\frac{\partial}{\partial R}\right|_{R=1}\rifun^\hhh_\mathrm{hk}\left(S^R\right)\neq 0.
    \end{equation*}
\end{remark}
\section{Variation formulas}\label{sec_var_form}
Motivated by \Cref{pansunotmin} and \Cref{Koranyinotmin}, we seek a general strategy to identify the correct critical configurations for problems such as \eqref{optiprobuno} and \eqref{optiprobdue}. The key tool is provided by the sub-Riemannian first and second variation formulas established in \cite{formuledivariazioneriem}.
First, we recall them in full generality, namely for arbitrary hypersurfaces and arbitrary functionals driven by the horizontal mean curvature. Subsequently, we specialize this framework to the total mean curvature functional in the first Heisenberg group. 
\subsection{Variations}\label{variationssectionssssss}
    A \emph{variation} is a smooth map $\Phi:I\times \hh^n\to \hh^n$, where $I\subseteq\rr$ is any open neighborhood of $0$, such that:
\begin{itemize}
    \item $p\mapsto \Phi(\tau,p)$ is a diffeomorphism for any $\tau\in I$;
    \item $\Phi(0,p)=p$ for any $p\in \hh^n$. 
\end{itemize}
We adopt the notation $\Phi_\tau(p)\coloneqq\Phi(\tau,p)$. A variation is \emph{compactly supported} if $\Phi_\tau(p)=p$ outside a compact set $K(\Phi)$. In the following, we tacitly assume that a variation is compactly supported. This is clearly not restrictive when computing variations of a closed hypersurface. Define the time-dependent vector field $\Y$ by
\begin{equation*}
    \Y(\tau,q)=\left.\frac{\partial}{\partial \sigma}\right|_{\sigma=0}\left(\Phi_{\sigma+\tau}\circ\Phi_\tau^{-1}\right)(q)\qquad\text{for any $\tau\in I$, $q\in \hh^n$.}
\end{equation*}
%Then, the \emph{velocity} $\X$ and the \emph{acceleration} $\X'$ of $\Phi$ are defined respectively by 
Define
\begin{equation*}
    \X(q)=\Y(0,q),\qquad \Z(q)=\left.\frac{\partial }{\partial \tau}\right|_{\tau=0}\Y(\tau,q)+\left(\nabla_\X\X\right)(q)\qquad\text{for any $q\in \hh^n$,}
\end{equation*}
where in the above definition $\nabla$ is the pseudohermitian connection \eqref{pseudotorsion}.
We call the vector fields $\X$ and $\Z$ respectively  \emph{variational velocity field} and \emph{variational acceleration field} of $\Phi$.  Given any couple of vector fields $\X,\Z$ it is always possible to construct a variation having $\X$ and $\Z$ respectively as velocity and acceleration.
\begin{proposition}\label{costruirevariazionesubriem}
    Let $\X,\Z\in\Gamma(T\hh^n)$. Assume that $\X$,$\Z$ are compactly supported in $\hh^n$. Set 
    \begin{equation}\label{costruirevariazioniinhn}
        \Phi_\tau(p)\coloneqq p\cdot \left(\tau \X(p)+\frac{\tau^2}{2}\Z(p)\right),\qquad \tau\in I,\,p\in\hh^n.
    \end{equation}
  %  where we employed the identification \eqref{identification}.
    If $I$ is sufficiently small, then $\Phi$ is a variation. Moreover, $\Phi$ has velocity $\X$ and acceleration $\Z$.
\end{proposition}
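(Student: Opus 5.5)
The plan is to verify the three requirements separately: that $\Phi$ is a variation for small $I$, that its velocity is $\X$, and that its acceleration is $\Z$. Throughout I work in the global coordinates $\rr^{2n+1}$ and use the identification of a point $v$ with $\sum_j v_jZ_j$.

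\textbf{Left translation is affine in the second factor.} By \eqref{eq:Hproduct}, for fixed $q$ the map $v\mapsto q\cdot v$ has the form
\begin{equation*}
q\cdot v=q+v+B(q,v),
\end{equation*}
where $B(q,v)$ is linear in $v$ and has only a $t$-component. Moreover,
\begin{equation*}
v+B(q,v)=\sum_j v_jZ_j(q)
\end{equation*}
as a Euclidean vector, which is exactly the statement that $dL_q$ maps $Z_j(0)$ to $Z_j(q)$.

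\textbf{$\Phi$ is a variation.} Write $\X=\sum_j a^jZ_j$ and $\Z=\sum_j b^jZ_j$, and identify them with the points $(a^j)$ and $(b^j)$. By the affine structure above,
\begin{equation*}
\Phi_\tau(p)=p+\tau F_\tau(p),\qquad F_\tau(p)=\X(p)+\frac{\tau}{2}\Z(p),
\end{equation*}
with $\X(p),\Z(p)$ regarded as Euclidean vectors. This gives three facts:
\begin{itemize}
\item $\Phi$ is smooth and $\Phi_0=\mathrm{id}$.
\item $F_\tau$ is supported in the compact set $K=\supp\X\cup\supp\Z$, so $\Phi_\tau=\mathrm{id}$ off $K$.
\item $\sup_{\tau\in I}\|DF_\tau\|_\infty\le C$ for some constant $C$.
\end{itemize}
Choose $I$ so that $|\tau|C<1$. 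Then $p\mapsto \tau F_\tau(p)$ is a contraction, so $\Phi_\tau$ is injective, since $|\Phi_\tau(p)-\Phi_\tau(q)|\ge(1-|\tau|C)|p-q|$. For surjectivity, the equation $\Phi_\tau(p)=y$ is the fixed-point problem $p=y-\tau F_\tau(p)$, which Banach's theorem solves. The differential $\mathrm{id}+\tau DF_\tau$ is invertible, so the inverse is smooth. Hence each $\Phi_\tau$ is a diffeomorphism.

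\textbf{Velocity.} From $\Y(\tau,q)=(\partial_\tau\Phi_\tau)(\Phi_\tau^{-1}(q))$ I get $\X$-velocity $=\partial_\tau|_0\Phi_\tau(q)=\X(q)$, using the affine form of $\Phi_\tau$.

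\textbf{Acceleration.} Since $\Phi_\tau(p)=p+\tau\X(p)+\frac{\tau^2}{2}\Z(p)$ exactly, we have
\begin{equation*}
\Y(\tau,q)=\X(p_\tau)+\tau\Z(p_\tau),\qquad p_\tau=\Phi_\tau^{-1}(q)=q-\tau\X(q)+O(\tau^2).
\end{equation*}
Differentiating at $\tau=0$ gives
\begin{equation*}
\partial_\tau|_0\Y=\Z-D_\X\X,
\end{equation*}
where $D$ is the Euclidean (flat) derivative. Therefore the acceleration equals $\Z+(\nabla_\X\X-D_\X\X)$, and it remains to show that $\nabla_\X\X=D_\X\X$.

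By \eqref{phflat}, $\nabla_\X\X=\sum_j\X(a^j)Z_j$. Expanding the Euclidean derivative,
\begin{equation*}
D_\X\X=\sum_j\X(a^j)Z_j+\sum_j a^jD_\X Z_j.
\end{equation*}
Using the explicit form of $X_i$ and $Y_i$, the correction terms are
\begin{equation*}
D_\X X_i=\X(y_i)\,\partial_t=a^{n+i}\partial_t,\qquad D_\X Y_i=-\X(x_i)\,\partial_t=-a^i\partial_t,\qquad D_\X T=0.
\end{equation*}
Substituting, $\sum_j a^jD_\X Z_j=\sum_i\left(a^ia^{n+i}-a^{n+i}a^i\right)\partial_t=0$. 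Hence $\nabla_\X\X=D_\X\X$, and the acceleration is $\Z$.

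I expect this last step to be the main point needing care: seeing that the difference between the pseudohermitian connection and the flat one is antisymmetric and therefore vanishes on the diagonal $(\X,\X)$. The remaining steps are routine.
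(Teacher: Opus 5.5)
Your proposal is correct and follows essentially the same route as the paper. Both arguments compute $\Y(\tau,\cdot)$ explicitly from the group law, differentiate at $\tau=0$ using $\partial_\tau|_{\tau=0}\Phi_\tau^{-1}=-\X$, and rely on the same cancellation of the cross terms $a^ia^{n+i}-a^{n+i}a^i$. You package this more cleanly: you write $\Phi_\tau(p)=p+\tau\X(p)+\frac{\tau^2}{2}\Z(p)$ in Euclidean terms, obtain $\partial_\tau|_{\tau=0}\Y=\Z-D_\X\X$, and then observe that $\nabla-D$ is antisymmetric (half the torsion). Your contraction argument also supplies the details of the diffeomorphism claim, which the paper only asserts.
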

\begin{proof}
    First, $\Phi$ is a smooth map such that $\Phi_0(p)=p$. This and the fact that $\X,\Z$ are compactly supported imply that, for $I$ sufficiently small, $p\mapsto\Phi_\tau(p)$ is a diffeomorphism. Hence $\Phi$ is a variation. 
    Set $\Phi_\tau(q)=\left(\Phi_\tau(q)_1,\ldots,\Phi_{\tau}(q)_{2n+1}\right)$, $\Phi^{-1}_\tau(q)=\left(\Phi^{-1}_\tau(q)_1,\ldots,\Phi^{-1}_{\tau}(q)_{2n+1}\right)$, and denote respectively by $(X^i)_{i=1}^{2n+1}$ and $(X^i)_{i=1}^{2n+1}$ the components of $\X$ and $\Z$ with respect to $Z_1,\ldots,Z_{2n+1}.$
 %   \begin{equation*}
  %      \X=\sum_{i=1}^nX^iZ_i,\qquad\Z=\sum_{i=1}^nZ^iZ_i.
 %   \end{equation*}
    Notice that
    \begin{align*}
        \Phi_{\sigma+\tau}(q)_i\overset{\eqref{eq:Hproduct}}&{=}q_i+(\sigma+\tau)X^i(q)+\frac{(\sigma+\tau)^2}{2}Z^i(q),\qquad i=1,\ldots,2n,\\
        \Phi_{\sigma+\tau}(q)_{2n+1}\overset{\eqref{eq:Hproduct}}&{=}q_{2n+1}+(\sigma+\tau)X^{2n+1}(q)+\frac{(\sigma+\tau)^2}{2}Z^{2n+1}(q)\\
        &\quad+\sum_{i=1}^nq_{n+i}\left((\sigma+\tau)X^i(q)+\frac{(\sigma+\tau)^2}{2}Z^i(q)\right)-\sum_{i=1}^nq_{i}\left((\sigma+\tau)X^{n+i}(q)+\frac{(\sigma+\tau)^2}{2}Z^{n+i}(q)\right),\\
    \end{align*}
    whence
\begin{equation*}
    \begin{split}
        \left.\frac{\partial}{\partial \sigma}\right|_{\sigma=0}\Phi_{\sigma+\tau}(q)&=\sum_{i=1}^{2n}\left(X^i(q)+\tau Z^i(q)\right)\frac{\partial}{\partial z_i}+\left(X^{2n+1}(q)+\tau Z^{2n+1}(q)\right)\frac{\partial}{\partial t}\\
        &\quad+\left(\sum_{i=1}^nq_{n+i}\left(X^i(q)+\tau Z^i(q)\right)-\sum_{i=1}^nq_{i}\left(X^{n+i}(q)+\tau Z^{n+i}(q)\right)\right)\frac{\partial}{\partial t}.
    \end{split}
\end{equation*}
Therefore,
\begin{equation*}
    \begin{split}
      &\Y(\tau,p)=\sum_{i=1}^{2n}\left(X^i\left(\Phi^{-1}_\tau(p)\right)+\tau Z^i\left(\Phi^{-1}_\tau(p)\right)\right)\frac{\partial}{\partial z_i}+\left(X^{2n+1}\left(\Phi^{-1}_\tau(p)\right)+\tau Z^{2n+1}\left(\Phi^{-1}_\tau(p)\right)\right)\frac{\partial}{\partial t}\\
        &\qquad+\left(\sum_{i=1}^n\Phi^{-1}_\tau(p)_{n+i}\left(X^i(\Phi^{-1}_\tau(p))+\tau Z^i(\Phi^{-1}_\tau(p))\right)-\sum_{i=1}^n\Phi^{-1}_\tau(p)_{i}\left(X^{n+i}(\Phi^{-1}_\tau(p))+\tau Z^{n+i}(\Phi^{-1}_\tau(p))\right)\right)\frac{\partial}{\partial t}.
    \end{split}
\end{equation*}
In particular, recalling that $\Phi_0^{-1}(p)=p$, we deduce that $\Y(0,p)=\X(p)$. Next, as $   \Phi\left(\tau,\Phi_\tau^{-1}(p)\right)=p$,
\begin{equation}\label{derivatainversofivariaz}
    \begin{split}
        0        %=\left.\frac{\partial}{\partial \tau}\right|_{\tau=0}\left(   \Phi\left(\tau,\Phi_\tau^{-1}(p)\right)\right)\\
=\frac{\partial\Phi}{\partial \tau}(0,p)+\sum_{i,j=1}^{2n+1}\frac{\partial\Phi_i}{\partial z_j}(0,p)\left(\left.\frac{\partial}{\partial \tau}\right|_{\tau=0}\Phi_t^{-1}(p)_j\right)\frac{\partial}{\partial z_i}        =\X(p)+\left.\frac{\partial}{\partial \tau}\right|_{\tau=0}\Phi_\tau^{-1}(p).
    \end{split}
\end{equation}
%Therefore
%\begin{equation}
 %   \left.\frac{\partial}{\partial \tau}\right|_{\tau=0}\Phi_\tau^{-1}(p)=-\frac{\partial\Phi}{\partial \tau}(0,p)=-\X(p).
%\end{equation}
Therefore,
\begin{equation*}
    \begin{split}
        \left.\frac{\partial}{\partial \tau}\right|_{\tau=0}\Y(\tau,p)&=\Z(p)+\sum_{i=1}^{2n}\left(\left.\frac{\partial}{\partial \tau}\right|_{\tau=0}X^i\left(\Phi^{-1}_\tau(p)\right)\right)\frac{\partial}{\partial z_i}\\
        &\quad+\left(\sum_{i=1}^n \left.\frac{\partial}{\partial \tau}\right|_{\tau=0}\left(\Phi^{-1}_\tau(p)_{n+i}\right)X^i(p)-\sum_{i=1}^n \left.\frac{\partial}{\partial \tau}\right|_{\tau=0}\left(\Phi^{-1}_\tau(p)_{i}\right)X^{n+i}(p)\right)\frac{\partial}{\partial t}\\
        &\quad+\left(\sum_{i=1}^np_{n+i} \left.\frac{\partial}{\partial \tau}\right|_{\tau=0}\left(X^i(\Phi^{-1}_\tau(p))\right)-\sum_{i=1}^np_{i} \left.\frac{\partial}{\partial \tau}\right|_{\tau=0}\left(X^{n+i}(\Phi^{-1}_\tau(p))\right)\right)\frac{\partial}{\partial t}\\
        \overset{\eqref{derivatainversofivariaz}}&{=}\Z(p)-\sum_{i=1}^{2n+1}\X\left\langle \X,\Z_i\right\rangle(p)Z_i(p)\\
        \overset{\eqref{phflat}}&{=}\Z(p)-\left(\nabla_\X\X\right)(p).
    \end{split}
\end{equation*}
\end{proof}
If a hypersurface $S$ is fixed, we say that a variation is \emph{non-characteristic} whenever it is compactly supported outside $S_0$. Roughly speaking, non-characteristic variations do not move $S$ close to its characteristic points. Throughout this section we restrict ourselves to consider non-characteristic variations. 
%In this case, there exist
%$\varphi\in C^\infty_c(S\setminus S_0)$ and $\X^{\hhh TS}\in\Gamma(\hhh TS)$ such that $\X|_S$ admits the $\langle\cdot,\cdot\rangle$-orthogonal decomposition
%\begin{equation}\label{decoxinsubriemvarformmain}
 %  \X|_S=\varphi\left(\vh+\alpha T\right)+X^{\hhh TS}.
%\end{equation}
%When $\varphi_T=0$ and $\X^{\hhh TS}=0$, we say that $\Phi$ is \emph{horizontally normal}.
\subsection{First and second variation formulas}\label{subsec_mainvarfotm}
  The following crucial result, proved in  \cite[Theorem 3.14]{formuledivariazioneriem}, establishes the variation formulas for \eqref{tmcfunsubriemderfgt5ryhythyh}. 
 We state it for closed hypersurfaces. If instead one computes variations on non-compact hypersurfaces, it suffices to restrict the relevant functionals to $K(\Phi)$. 
%As it will be clearer in a while, the first variation formula of $\tmc^\hhh_f$ depends only on $\X|_S$, while the second variation depends only on $\X|_S$ and $\Z|_S$, accordingly, we define
%\begin{equation*}
  %%  \delta\tmc^\hhh_f(S)[\X]\coloneqq \left.\frac{d}{d\tau}\right|_{\tau=0}\tmc^\hhh\left(\Phi_\tau(S)\right),\qquad   \delta^2\tmc^\hhh_f(S)[\X,\Z]\coloneqq \left.\frac{d^2}{d\tau^2}\right|_{\tau=0}\tmc\left(\Phi^\hhh_\tau(S)\right).
%\end{equation*}  
\begin{theorem}\label{teoremavariazionigeneralimainsubriem}
        Let $S\subseteq\hh^n$ be an embedded, closed hypersurface. Fix a function $f:\rr\to\rr$ which is smooth in a neighborhood of $\left\{H^\hhh(p)\,:\,p\in S\setminus S_0\right\}$. Let $\Phi$ be a non-characteristic variation.  Denote by $\X$ and $\Z$ its velocity and acceleration respectively. Define $\varphi,\psi\in C^\infty_c(S\setminus S_0)$ by
        \begin{equation}\label{lefunzioniimportanti}
        \begin{split}        \varphi&\coloneqq\left\langle \X,\vh+\alpha T\right\rangle,\\
            \psi&\coloneqq \left\langle\Z,\vh+\alpha T\right\rangle-2\left(\X-\varphi\vh\right)\varphi-\Big\langle\nabla_{\X-\varphi\vh}\left(\X-\varphi\vh\right),\vh+\alpha T\Big\rangle-2\alpha\varphi\left\langle \X,J(\vh)\right\rangle.
        \end{split}
        \end{equation}
        Set 
        \begin{equation*}
 \delta\tmc^\hhh_f(S)[\Phi]\coloneqq \left.\frac{d}{d\tau}\right|_{\tau=0}\tmc^\hhh_f\left(\Phi_\tau(S)\right),\qquad   \delta^2\tmc^\hhh_f(S)[\Phi]\coloneqq \left.\frac{d^2}{d\tau^2}\right|_{\tau=0}\tmc^\hhh_f\left(\Phi_\tau(S)\right).
\end{equation*} 
        Then 
            \begin{align}            \delta\tmc^\hhh_f(S)[\varphi]\coloneqq\delta\tmc^\hhh_f(S)[\Phi]&=  \int_S\varphi\Big(\jacobi^\hhh f'(H^\hhh)+f(H^\hhh) H^\hhh\Big)\,d\sigma^\hhh,\label{variazprimasubriemmain}\\
            \delta^2\tmc_f^\hhh(S)[\Phi]&=\delta\tmc^\hhh_f(S)\left[\psi\right]+\int_S\varphi\mathcal L^\hhh\varphi\,d\sigma^\hhh, \label{variazionesecondasubriemannianamain}
    \end{align}
    where $ \mathcal L^\hhh$ is the self-adjoint operator defined on $S\setminus S_0$ by
    \begin{equation*}
        \begin{split}
            \mathcal L^\hhh\varphi&=\jacobi^\hhh\left(f''(H^\hhh)\jacobi^\hhh\varphi\right)\\
            &\quad+2\divv^{\hhh,S} \left\langle f'(H^\hhh)A^\hhh\left(\nabla^{\hhh,S}\varphi\right)\right)+4 f'(H^\hhh)\s J(\vh)\varphi-\left(f'(H^\hhh)H^\hhh+f(H^\hhh)\right)\hat\Delta^{\hhh,S}\varphi\\
            &\quad+4\alpha f'(H^\hhh)\Big(\tilde h^\hhh\left(\nabla^{\hhh,S}\varphi,J(\vh)\right)-H^\hhh J(\vh)\varphi\Big)- 4 f''(H^\hhh)\Big(\tilde h^\hhh\left(\nabla^{\hhh,S}H^\hhh,\nabla^{\hhh,S}\varphi\right)+ \s \varphi J(\vh)H^\hhh\Big)\\
            &\quad+\left(f''(H^\hhh)H^\hhh-2f'(H^\hhh)\right)\left\langle \nabla^{\hhh,S} H^\hhh,\nabla^{\hhh,S}\varphi\right\rangle\\
            &\quad+\varphi f'(H^\hhh)\Big( 2\trace\left(\left(\tilde h^\hhh\right)^3\right)+12\tilde h^\hhh\left(\nabla^{\hhh,S}\alpha,J(\vh)\right)+8\s\alpha+6\tilde h^\hhh(J(\vh),J(\vh))\alpha^2+2 H^\hhh\alpha^2\Big)\\
            &\quad+\varphi \Big ( f(H^\hhh)\left( H^\hhh\right)^2-\left(2f'(H^\hhh) H^\hhh+f(H^\hhh)\right)\left(|\tilde h^\hhh|^2+4J(\vh)\alpha+(2n+2)\alpha^2\right)\Big).
        \end{split}
    \end{equation*}
   Here $\jacobi^\hhh$ and $A^\hhh$ are the horizontal Jacobi operator and the horizontal shape operator (cf. \Cref{subsechypersurf}).
    \end{theorem}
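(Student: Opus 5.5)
The plan is to derive both formulas from the Riemannian approximation scheme. We regard $\hh^n$ as the limit of the Riemannian manifolds $(\hh^n,g_\varepsilon)$, where $g_\varepsilon$ makes $X_i,Y_i,\varepsilon T$ orthonormal. On $S\setminus S_0$ we then express $\tmc^\hhh_f$ as the limit of suitably rescaled Riemannian functionals $\int_S f_\varepsilon(H^\varepsilon)\,d\sigma^\varepsilon$. Since $\Phi$ is non-characteristic, everything happens on a compact subset $K(\Phi)\cap S$ of $S\setminus S_0$, where $|\n^\hhh|$ is bounded below. On that set:
\begin{itemize}
\item the expansions $\varepsilon\, d\sigma^\varepsilon\to d\sigma^\hhh$ and $H^\varepsilon\to H^\hhh$ hold uniformly in $C^k$, together with their $\tau$-derivatives along $\Phi_\tau(S)$;
\item derivatives in $\tau$ and limits in $\varepsilon$ can therefore be exchanged.
\end{itemize}

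The first step is the Riemannian first and second variations of $\int f(H)\,d\sigma$ on a hypersurface of a Riemannian manifold. Write the velocity as $\X=u\n_\varepsilon+\X^\top$. The tangential part contributes only a boundary or Lie-derivative term, which integrates to zero on a closed hypersurface. This gives
\[
\delta=\int u\bigl(-\Delta f'(H)-f'(H)(|h|^2+\mathrm{Ric}(\n,\n))+f(H)H\bigr).
\]
For the second variation one differentiates again: apply the classical evolution equations for $H$, $h$, $d\sigma$ and the Laplacian, and collect the acceleration term into a first-variation term in the normal component of the acceleration, corrected by the tangential parts.

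The second step is to pass to the limit $\varepsilon\to0$ using the known expansions in terms of the horizontal frame $\{\hhh'TS,J(\vh),\s\}$, namely those of the Levi-Civita connection of $g_\varepsilon$ relative to the pseudohermitian connection \eqref{pseudotorsion}, and of $h^\varepsilon$, $\mathrm{Ric}_\varepsilon$, $\Delta^\varepsilon$ (cf. \cite{simons,MR4193432}). After rescaling, $u$ becomes $\varphi=\langle\X,\vh+\alpha T\rangle$, and the horizontal Jacobi operator $\jacobi^\hhh$ emerges as the limit of $-\Delta-|h|^2-\mathrm{Ric}$. Here the $\varepsilon^{-2}$ blow-up of the vertical second derivatives is exactly compensated, and it produces the terms $4J(\vh)\alpha+(2n+2)\alpha^2$ together with the modified Laplacian $\hat\Delta^{\hhh,S}$. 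This yields \eqref{variazprimasubriemmain}.

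For \eqref{variazionesecondasubriemannianamain}, I would split the second derivative as follows.
\begin{itemize}
\item The first part is $\delta\tmc_f^\hhh(S)[\psi]$, where $\psi$ is the $\tau$-derivative of the normal speed. The corrections in \eqref{lefunzioniimportanti} account for the tangential component $\X-\varphi\vh$, the torsion term $2\alpha\varphi\langle\X,J(\vh)\rangle$, and the relation between $\Z$ and $\partial_\tau\Y$ given by \Cref{costruirevariazionesubriem}.
\item The second part is the quadratic form $\int\varphi\mathcal L^\hhh\varphi$, computed from the linearizations of $H^\hhh$ (namely $\jacobi^\hhh\varphi$), of $d\sigma^\hhh$ (namely $H^\hhh\varphi$) and of $\jacobi^\hhh$ itself.
\end{itemize}
To verify self-adjointness, I would integrate by parts with $\divv^{\hhh,S}$ and use the divergence formula $\int\divv^{\hhh,S}\A\,d\sigma^\hhh=0$, valid for compactly supported $\A$ on $S\setminus S_0$.

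The main obstacle is the linearization of $\jacobi^\hhh$, that is, the variation of $\hat\Delta^{\hhh,S}$ and of $|\tilde h^\hhh|^2+4J(\vh)\alpha+(2n+2)\alpha^2$ along a horizontal normal deformation. This is where the terms in $\s J(\vh)\varphi$, $\tilde h^\hhh(\nabla^{\hhh,S}\alpha,J(\vh))$, $\s\alpha$ and $\trace((\tilde h^\hhh)^3)$ originate, through the commutators $[\varphi\vh,J(\vh)]$ and $[\varphi\vh,\s]$ computed via \eqref{pseudotorsion} and \eqref{phflat}. I would compute these commutators directly in the pseudohermitian frame rather than through $\varepsilon$-limits, and use the Riemannian approximation only as a consistency check on the lower-order coefficients.
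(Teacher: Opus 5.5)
The paper does not prove this statement; it imports it from \cite[Theorem 3.14]{formuledivariazioneriem}, so your outline has to stand on its own. Its first-variation half essentially does.

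With $g_\varepsilon(T,T)=\varepsilon^{-2}$ one has $\n_\varepsilon=(\vh+\varepsilon^2\alpha T)/\sqrt{1+\varepsilon^2\alpha^2}$. Hence $u=\varphi/\sqrt{1+\varepsilon^2\alpha^2}$ and $\varepsilon\,d\sigma^\varepsilon=\sqrt{1+\varepsilon^2\alpha^2}\,d\sigma^\hhh$. In $\hh^1$ one checks $\Delta^\varepsilon\to\hat\Delta^{\hhh,S}$ and $|h^\varepsilon|^2+\mathrm{Ric}_\varepsilon(\n_\varepsilon,\n_\varepsilon)\to(H^\hhh)^2+4J(\vh)\alpha+4\alpha^2$. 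Your bookkeeping of where the cancellation happens is off, though. The Laplacian does not blow up. The $\varepsilon^{-2}$ cancellation is between $\mathrm{Ric}_\varepsilon(\n_\varepsilon,\n_\varepsilon)\approx-2\varepsilon^{-2}$ and $2h^\varepsilon(J(\vh),\s^\varepsilon)^2\approx 2\varepsilon^{-2}$, where $\s^\varepsilon=\varepsilon\s/\sqrt{1+\varepsilon^2\alpha^2}$. The extra $2\alpha J(\vh)$ in $\hat\Delta^{\hhh,S}$ comes from $\nabla^\varepsilon_{\s^\varepsilon}\s^\varepsilon\to-2\alpha J(\vh)$, with $\nabla^\varepsilon$ the Levi-Civita connection of $g_\varepsilon$.

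\textbf{The gap.} The identity you plan to use for integrating by parts and for checking self-adjointness is false. That identity is $\int_S\divv^{\hhh,S}\A\,d\sigma^\hhh=0$ for $\A\in\Gamma(\hhh TS)$ compactly supported in $S\setminus S_0$. Apply the Riemannian divergence theorem on $S$ to $(1+\alpha^2)^{-1/2}\A$ and to $(1+\alpha^2)^{-1/2}g\,\s$. This gives
\[
\int_S\divv^{\hhh,S}\A\,d\sigma^\hhh=-2\int_S\alpha\langle J(\vh),\A\rangle\,d\sigma^\hhh,\qquad \int_S\s g\,d\sigma^\hhh=\int_S\alpha H^\hhh g\,d\sigma^\hhh .
\]
You can see the first directly on a rotationally invariant surface. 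By \eqref{jvinlocalcoords} and \eqref{subriemareaelementrotinv}, $\int_S J(\vh)g\,d\sigma^\hhh=\int(-x^2\partial_s g+\dot t\,\partial_\theta g)\,ds\,d\theta=\int 2x\dot x\,g\,ds\,d\theta=-2\int_S\alpha g\,d\sigma^\hhh$. This is nonzero on the Pansu sphere parametrized by $(\cos s,\frac s2+\frac{\sin 2s}{4})$, where $\alpha=\tan s$, for any $g\geq 0$, $g\not\equiv0$, supported in $\{s>0\}$.

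These torsion terms are exactly why the statement uses $\hat\Delta^{\hhh,S}$ rather than $\Delta^{\hhh,S}$, which is not self-adjoint. They are also why, in the proof of \Cref{varfortmccnrvrjvnefrjcne}, $\int\varphi\,(4\s J(\vh)\varphi-4\alpha H^\hhh J(\vh)\varphi)\,d\sigma^\hhh$ integrates to $-4\int\s\varphi\,J(\vh)\varphi\,d\sigma^\hhh$. With your identity, $\Delta^{\hhh,S}$ would come out symmetric and $\hat\Delta^{\hhh,S}$, $\jacobi^\hhh$, $\mathcal L^\hhh$ would not. So your self-adjointness check fails. Every integration by parts in the direct pseudohermitian computation you propose for the second variation would also produce wrong lower-order coefficients.

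Beyond this, the actual content of the theorem is never derived. That content is the precise $\psi$ and the coefficients of $\mathcal L^\hhh$, such as $12\tilde h^\hhh(\nabla^{\hhh,S}\alpha,J(\vh))$, $8\s\alpha$, $6\tilde h^\hhh(J(\vh),J(\vh))\alpha^2$ and the $f''$-terms. You name the commutators $[\varphi\vh,J(\vh)]$, $[\varphi\vh,\s]$ as their source but do not compute them. As it stands, the second half is a strategy, not a proof.

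If you keep the $\varepsilon$-route as a check, be aware that the split does not pass to the limit term by term. The Riemannian correction $g_\varepsilon(\partial_\tau\Y+\nabla^\varepsilon_\X\X,\n_\varepsilon)-2\X^\top u+h^\varepsilon(\X^\top,\X^\top)$ does converge to the $\psi$ of \eqref{lefunzioniimportanti}, but only through a cancellation. The term $g_\varepsilon(\nabla^\varepsilon_\X\X-\nabla_\X\X,\n_\varepsilon)=-2\varepsilon^{-2}\langle\X,T\rangle\langle\X,J(\vh)\rangle/\sqrt{1+\varepsilon^2\alpha^2}$ cancels against the mixed part $2h^\varepsilon(J(\vh),\s^\varepsilon)\langle\X,J(\vh)\rangle\, g_\varepsilon(\X,\s^\varepsilon)$ of $h^\varepsilon(\X^\top,\X^\top)$. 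The finite remainders then reassemble into $-\langle\nabla_{\X-\varphi\vh}(\X-\varphi\vh),\vh+\alpha T\rangle-2\alpha\varphi\langle\X,J(\vh)\rangle$. The same kind of tracking is needed in the quadratic part, where curvature terms of order $\varepsilon^{-2}$ appear.
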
 
    \subsection{Total mean curvature: stationarity and stability}\label{sezionetmcstatstab}
    In this section we focus on the total mean curvature functional
    \begin{equation*}
    \tmc^\hhh(S)=\int_S H^\hhh\,d\sigma^\hhh
\end{equation*}
in the first Heisenberg group, describing the correct notions of stationarity and stability when area constraints are prescribed. First, we specialize \Cref{teoremavariazionigeneralimainsubriem} to area and total mean curvature. In both cases, \eqref{variazprimasubriemmain} and \eqref{variazionesecondasubriemannianamain} simplify drastically. 
   \begin{proposition}  [Variation formulas - Area]\label{varformulaaererfvjecnvorevjnprop}
            Let $S\subseteq\hh^1$ be an embedded, closed surface. Let $\Phi$ be a non-characteristic variation. 
            %Denote by $\X$ and $\Z$ its velocity and acceleration respectively. Decompose $\X|_S$ as in \eqref{decoxinsubriemvarformmain}.
        Then
    \begin{align}
            \delta\area^\hhh(S)[\Phi]&=  \int_S\varphi H^\hhh\,d\sigma^\hhh,\label{variazioneprimaareasubriemmain}\\     
            \delta^2\area^\hhh(S)[\Phi]&=\delta\area^\hhh(S)\left[\psi\right]+\int_S\Big(\left(J(\vh)\varphi\right)^2-4\varphi^2\left(J(\vh)\alpha+\alpha^2\right)\Big)\,d\sigma^\hhh.\label{variazsecondaareastatementsubriem}
    \end{align}   
    \end{proposition}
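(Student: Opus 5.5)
This is a direct specialization of \Cref{teoremavariazionigeneralimainsubriem} with $n=1$ and $f\equiv 1$, so that $f'=f''=0$.

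\emph{First variation.} With $f'=0$ the term $\jacobi^\hhh f'(H^\hhh)$ in \eqref{variazprimasubriemmain} vanishes. Only $f(H^\hhh)H^\hhh=H^\hhh$ survives, which gives \eqref{variazioneprimaareasubriemmain} at once. In particular $\delta\area^\hhh(S)[\psi]=\int_S\psi H^\hhh\,d\sigma^\hhh$.

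\emph{Second variation.} In the operator $\mathcal L^\hhh$, every term carrying a factor $f'$ or $f''$ drops out. This removes the term $\jacobi^\hhh(f''\jacobi^\hhh\varphi)$, the divergence term, the $\s J(\vh)\varphi$ term, the $\alpha\tilde h^\hhh$ terms, the gradient-of-$H^\hhh$ terms, and the cubic trace term. What remains is
\begin{equation*}
\mathcal L^\hhh\varphi=-\hat\Delta^{\hhh,S}\varphi+\varphi\Big((H^\hhh)^2-|\tilde h^\hhh|^2-4J(\vh)\alpha-4\alpha^2\Big),
\end{equation*}
where we used $2n+2=4$.

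In $\hh^1$ the bundle $\hhh TS$ is one-dimensional and spanned by the unit vector $J(\vh)$. Hence $\tilde h^\hhh$ is $1\times1$, and $|\tilde h^\hhh|^2=(\trace\tilde h^\hhh)^2=(H^\hhh)^2$. The curvature terms therefore cancel, leaving $-\hat\Delta^{\hhh,S}\varphi-4\varphi(J(\vh)\alpha+\alpha^2)$.

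\emph{Integration by parts.} It remains to show that
\begin{equation*}
-\int_S\varphi\,\hat\Delta^{\hhh,S}\varphi\,d\sigma^\hhh=\int_S\left(J(\vh)\varphi\right)^2\,d\sigma^\hhh
\qquad\text{for }\varphi\in C^\infty_c(S\setminus S_0).
\end{equation*}
This is the step needing care, because $\Delta^{\hhh,S}$ alone is not symmetric with respect to $\sigma^\hhh$, and the correction $2\alpha J(\vh)\varphi$ is exactly what compensates. I would invoke the known horizontal divergence theorem on $S\setminus S_0$ (cf. \cite{MR2354992}):
\begin{equation*}
\int_S\left(\divv^{\hhh,S}\A+2\alpha\langle J(\vh),\A\rangle\right)d\sigma^\hhh=0
\qquad\text{for }\A\in\Gamma(\hhh TS)\text{ compactly supported}.
\end{equation*}
Take $\A=\varphi\nabla^{\hhh,S}\varphi=\varphi\,(J(\vh)\varphi)\,J(\vh)$. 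Then
\begin{equation*}
\divv^{\hhh,S}\A=(J(\vh)\varphi)^2+\varphi\,\Delta^{\hhh,S}\varphi,
\qquad 2\alpha\langle J(\vh),\A\rangle=2\alpha\varphi\,J(\vh)\varphi.
\end{equation*}
The identity above then yields $\int_S\varphi\,\hat\Delta^{\hhh,S}\varphi\,d\sigma^\hhh=-\int_S(J(\vh)\varphi)^2\,d\sigma^\hhh$, as required.

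This divergence identity is consistent with the paper's remark that $\hat\Delta^{\hhh,S}$ is self-adjoint. If it needed to be checked directly, I would verify it in local coordinates for rotationally invariant surfaces using \Cref{jvwithresptolocalchart}, but I expect it can simply be cited.

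Inserting this into \eqref{variazionesecondasubriemannianamain} gives \eqref{variazsecondaareastatementsubriem}.
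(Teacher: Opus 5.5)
Your proposal is correct and follows the paper's route. The paper gives no separate proof of this proposition, but the intended argument, mirrored in its proof of the total mean curvature formulas, is to specialize \Cref{teoremavariazionigeneralimainsubriem} to $n=1$ and $f\equiv 1$, cancel $(H^\hhh)^2$ against $|\tilde h^\hhh|^2$, and integrate $\varphi\,\hat\Delta^{\hhh,S}\varphi$ by parts via the horizontal divergence identity (the paper cites \cite[Proposition 3.10]{formuledivariazioneriem} for this step), and your sign convention $\int_S\big(J(\vh)f+2\alpha f\big)\,d\sigma^\hhh=0$ for that identity is the correct one.
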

    Similar versions of \Cref{varformulaaererfvjecnvorevjnprop} can be found e.g. in \cite{MR2354992,MR2435652}.
   \begin{proposition}[Variation formulas - Total mean curvature]\label{varfortmccnrvrjvnefrjcne}
            Let $S\subseteq\hh^1$ be an embedded, closed surface. Let $\Phi$ be a non-characteristic variation. 
            %Denote by $\X$ and $\Z$ its velocity and acceleration respectively. Decompose $\X|_S$ as in \eqref{decoxinsubriemvarformmain}.
        Then
    \begin{align}
            \delta\tmc^\hhh(S)[\Phi]&=  -4\int_S\varphi\left(J(\vh)\alpha+\alpha^2\right)\,d\sigma^\hhh,\label{primavariaztmcdtatementmain}\\
            \delta^2\tmc^\hhh(S)[\Phi]&=\delta\tmc^\hhh(S)\left[\psi\right]+4\int_S\Big(-\s\varphi\,J(\vh)\varphi+\varphi^2\left(2\s\alpha-H^\hhh\alpha^2\right)\Big)\,d\sigma^\hhh.\label{variazsecondatmcsubriemmainstatement}
    \end{align}   
    \end{proposition}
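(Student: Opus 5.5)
We specialize \Cref{teoremavariazionigeneralimainsubriem} to $n=1$ and $f(H)=H$, so that $f'\equiv 1$ and $f''\equiv 0$. Throughout we use $\E\coloneqq J(\vh)$, which spans $\hhh TS$ on $S\setminus S_0$. Two structural facts about $\hh^1$ drive all the simplifications.

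\emph{Fact 1.} $\hhh TS$ is one-dimensional. Hence $\tilde h^\hhh$ is the $1\times1$ form $\tilde h^\hhh(\E,\E)=H^\hhh$, which gives $|\tilde h^\hhh|^2=(H^\hhh)^2$ and $\trace((\tilde h^\hhh)^3)=(H^\hhh)^3$.

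\emph{Fact 2.} $A^\hhh(\E)=H^\hhh\E$. Indeed, $\nabla$ is metric and preserves $\hhh$ by \eqref{phflat}, and $|\vh|=1$, so $\nabla_\E\vh$ is horizontal and orthogonal to $\vh$.

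With these, $\nabla^{\hhh,S}\varphi=(\E\varphi)\E$ and $\divv^{\hhh,S}\E=\langle\nabla_\E\E,\E\rangle=0$. Consequently $\divv^{\hhh,S}(g\E)=\E g$ and $\hat\Delta^{\hhh,S}\varphi=\E\E\varphi+2\alpha\E\varphi$.

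\emph{First variation.} In \eqref{variazprimasubriemmain} we have $\jacobi^\hhh f'(H^\hhh)=\jacobi^\hhh 1=-\big((H^\hhh)^2+4\E\alpha+4\alpha^2\big)$. Adding $f(H^\hhh)H^\hhh=(H^\hhh)^2$ cancels the curvature term and leaves $-4(\E\alpha+\alpha^2)$, which is \eqref{primavariaztmcdtatementmain}.

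\emph{Second variation, simplifying $\mathcal L^\hhh$.} All $f''$-terms drop. Term by term:
\begin{itemize}
\item $2\divv^{\hhh,S}(A^\hhh\nabla^{\hhh,S}\varphi)=2\E(H^\hhh\E\varphi)$.
\item $-2H^\hhh\hat\Delta^{\hhh,S}\varphi=-2H^\hhh\E\E\varphi-4\alpha H^\hhh\E\varphi$.
\item The bracket $4\alpha\big(\tilde h^\hhh(\nabla^{\hhh,S}\varphi,\E)-H^\hhh\E\varphi\big)$ vanishes.
\item $-2\langle\nabla^{\hhh,S}H^\hhh,\nabla^{\hhh,S}\varphi\rangle=-2\E H^\hhh\,\E\varphi$ cancels the $\E H^\hhh$ part of the first item.
\end{itemize}
The first-order part of $\mathcal L^\hhh\varphi$ is therefore $4\s\E\varphi-4\alpha H^\hhh\E\varphi$.

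For the zeroth-order part,
\begin{multline*}
2(H^\hhh)^3+12H^\hhh\E\alpha+8\s\alpha+6H^\hhh\alpha^2+2H^\hhh\alpha^2\\
+(H^\hhh)^3-3H^\hhh\big((H^\hhh)^2+4\E\alpha+4\alpha^2\big)=8\s\alpha-4H^\hhh\alpha^2 .
\end{multline*}
Thus
\begin{equation*}
\int_S\varphi\mathcal L^\hhh\varphi\,d\sigma^\hhh=4\int_S\Big(\varphi\,\s\E\varphi-\alpha H^\hhh\varphi\,\E\varphi+\varphi^2\big(2\s\alpha-H^\hhh\alpha^2\big)\Big)\,d\sigma^\hhh .
\end{equation*}

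\emph{Integration by parts.} To reach \eqref{variazsecondatmcsubriemmainstatement} it remains to show
\begin{equation*}
\int_S\varphi\,\s\E\varphi\,d\sigma^\hhh=-\int_S\s\varphi\,\E\varphi\,d\sigma^\hhh+\int_S\alpha H^\hhh\varphi\,\E\varphi\,d\sigma^\hhh
\end{equation*}
for $\varphi\in C^\infty_c(S\setminus S_0)$. This is equivalent to the divergence identity
\begin{equation*}
\int_S\s g\,d\sigma^\hhh=\int_S\alpha H^\hhh g\,d\sigma^\hhh,\qquad g\in C^\infty_c(S\setminus S_0),
\end{equation*}
applied with $g=\varphi\,\E\varphi$. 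In other words, the divergence of $\s$ with respect to $\sigma^\hhh$ equals $-\alpha H^\hhh$.

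This divergence identity is the main obstacle. My first attempt will be to write $g\s=g(T-\alpha\vh)$ as the tangential part of the ambient field $gT$. The Riemannian divergence theorem on $S$ (whose Riemannian mean curvature and normal are expressed through $\vh,\alpha$ via $\n=(\vh+\alpha T)/\sqrt{1+\alpha^2}$) should then be combined with the density $(1+\alpha^2)^{-1/2}$ from \eqref{areaelementorizz}. The key inputs are that $T$ is $\nabla$-parallel, that $\nabla_\E\vh=H^\hhh\E$, and the torsion relation \eqref{pseudotorsion}.

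Failing a clean general derivation, I will test the identity against the first variation of area. Take the vertical variation with $\X=gT$ extended off $S$, so that $\varphi=\alpha g$. The vertical translation flow preserves $\sigma^\hhh$ in the divergence sense, i.e.\ the first variation of area along $gT$ equals $\int_S\s g\,d\sigma^\hhh$ up to the tangential contribution. Comparing this with \eqref{variazioneprimaareasubriemmain}, which gives $\int_S\alpha gH^\hhh\,d\sigma^\hhh$, would yield exactly the needed formula.

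As a sanity check, I will also verify the identity in the rotational coordinates of \Cref{jvwithresptolocalchart} using \eqref{subriemareaelementrotinv}.
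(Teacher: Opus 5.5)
Your proposal is correct and follows the paper's proof. You specialize the general formulas with $f'\equiv 1$, $f''\equiv 0$, $n=1$, reduce $\mathcal L^\hhh\varphi$ to $4\s J(\vh)\varphi-4\alpha H^\hhh J(\vh)\varphi+4\varphi(2\s\alpha-H^\hhh\alpha^2)$ exactly as the paper does, and finish with one integration by parts of the first term. The identity you isolate, $\int_S\s g\,d\sigma^\hhh=\int_S\alpha H^\hhh g\,d\sigma^\hhh$, is precisely what the paper takes from \cite[Proposition 3.10]{formuledivariazioneriem}, and it does hold. One short proof, which is the precise form of your vertical-isometry heuristic, uses $d\sigma^\hhh=\iota_{\vh}dV$ on $S$ and $T=\s+\alpha\vh$: Stokes gives $\int_S\s g\,d\sigma^\hhh=-\int_S g\,d(\iota_T\iota_{\vh}dV)$, and on $S$ one has $d(\iota_T\iota_{\vh}dV)=\iota_{[T,\vh]}dV-H^\hhh\,\iota_T dV=-\alpha H^\hhh\,d\sigma^\hhh$, because $[T,\vh]=\nabla_T\vh$ is a multiple of the tangent field $J(\vh)$.
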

    \begin{proof}
        Since $f(H^\hhh)\equiv H^\hhh$, $f'(H^\hhh)\equiv 1$ and $f''(H^\hhh)\equiv 0$. Moreover, as $n=1$, $\hhh TS=\spann J(\vh)$, whence 
        \begin{equation*}
            \nabla^{\hhh,S}g=\left(J(\vh) g\right)J(\vh),\qquad\tilde h^\hhh(\A,\B)=H^\hhh\left\langle J(\vh),\A\right\rangle \left\langle J(\vh),\B\right\rangle,\qquad |\tilde h^\hhh|^2=\left(H^\hhh\right)^2
        \end{equation*}
        for $g\in C^\infty(S\setminus S_0)$ and $\A,\B\in\Gamma(\hhh T S)$. Then \eqref{primavariaztmcdtatementmain} directly follows. Finally,
   $\mathcal L^\hhh$ simplifies as 
    \begin{equation*}
        \begin{split}
            \mathcal L^\hhh\varphi&=2\divv^{\hhh,S} \left( H^\hhh J(\vh)\varphi J(\vh)\right)+4 \s J(\vh)\varphi-2H^\hhh\hat\Delta^{\hhh,S}\varphi-2\left\langle \nabla^{\hhh,S} H^\hhh,\nabla^{\hhh,S}\varphi\right\rangle+4\varphi \Big( 2\s\alpha- H^\hhh\alpha^2\Big)\\
            &=2\divv^{\hhh,S} \left( H^\hhh J(\vh)\varphi J(\vh)\right)+4 \s J(\vh)\varphi-2\divv^{\hhh,S}\left(H^\hhh \nabla^{\hhh,S}\varphi\right)-4\alpha H^\hhh J(\vh)\varphi+4\varphi \Big( 2\s\alpha- H^\hhh\alpha^2\Big)\\
            &=4 \s J(\vh)\varphi-4\alpha H^\hhh J(\vh)\varphi+4\varphi \Big( 2\s\alpha- H^\hhh\alpha^2\Big).
        \end{split}
    \end{equation*}
    The thesis follows integrating by parts the first term of the above formula (cf. \cite[Proposition 3.10]{formuledivariazioneriem}).
 %   \begin{equation*}
   %     \int_S\varphi\mathcal L^\hhh\varphi\,d\sigma^\hhh=4\int_S\Big(-\s\varphi\,J(\vh)\varphi+\varphi^2\left(2\s\alpha-H^\hhh\alpha^2\right)\Big)\,d\sigma^\hhh.
%    \end{equation*}
    \end{proof}
    We are interested in variations which preserve the area. Accordingly, we say that a variation $\Phi$ is:
    \begin{itemize}
        \item [(i)] \emph{area-preserving} if $\area^\hhh(\Phi_\tau(S))\equiv \area^\hhh (S)$ for every $\tau\in I$.
        \item [(ii)] \emph{first-order area-preserving} if $\delta\area^\hhh(S)[\Phi]=0$.
    \end{itemize}
    Area-preserving variations are first-order area-preserving, while the latter condition is the first-order approximation of the area-preserving property. Nevertheless, owing to a classical argument (cf. \cite[Lemma 2.1]{MR0731682}), every first-order area-preserving variation can be modified to produce an area-preserving variation enjoying its same first-order behavior. 
    Before doing this, we need to exclude the existence of closed minimal surfaces.
    \begin{lemma}\label{nominimalandclosed}
        Let $S\subseteq\hh^1$ be an embedded, closed surface. There exists $p\in S\setminus S_0$ such that $H^\hhh(p)\neq 0$.
    \end{lemma}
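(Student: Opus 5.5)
The plan is a comparison (maximum principle) argument, using vertical cylinders as barriers. By \Cref{exvertcil} and intrinsic dilations, a vertical cylinder $C_R=\{x^2+y^2=R^2\}$ is non-characteristic with $H^\hhh\equiv 1/R\neq0$ (with respect to the normal pointing toward the axis).

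Since $S$ is compact, the function $r^2=x^2+y^2$ attains its maximum $R^2$ on $S$ at some point $p$. A closed embedded surface cannot lie in the $t$-axis, so $R>0$. At $p$, the surface $S$ lies in the closed solid cylinder $\{x^2+y^2\le R^2\}$ and touches $C_R$ there. Hence $T_pS=T_pC_R$. This plane contains the vertical direction $T$, so it cannot coincide with the horizontal plane $\hhh_p$. Therefore $p\notin S_0$, which is the crucial point of choosing vertical cylinders as barriers.

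Near $p$, the horizontal unit normals of $S$ and $C_R$ agree up to sign, and we orient $S$ so that they coincide. We then compare $H^\hhh$ at $p$. Write locally $S=\{u=0\}$ and $C_R=\{v=0\}$ with $u\ge v$ near $p$ (equivalently $\{u\le 0\}\subseteq\{v\le0\}$ locally), $u(p)=v(p)=0$ and $\nabla u(p)=\nabla v(p)$. Since $n=1$, the proof of \Cref{lemmaespressionecurvaturarotinv} and \Cref{varfortmccnrvrjvnefrjcne} show that $H^\hhh=\langle\nabla_{J(\vh)}\vh,J(\vh)\rangle$ with $\vh=\nabla^\hhh u/|\nabla^\hhh u|$. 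Expanding via \eqref{phflat}, this becomes
\begin{equation*}
H^\hhh=\frac{J(\vh)J(\vh)u}{|\nabla^\hhh u|}
\end{equation*}
plus terms that depend only on first derivatives of $u$. Those lower-order terms coincide for $u$ and $v$ at $p$.

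Since $J(\vh)$ is tangent to both surfaces at $p$, the function $u-v\ge0$ attains a minimum at $p$. Consequently $J(\vh)J(\vh)(u-v)(p)\ge0$: take the integral curve of $J(\vh)$ through $p$, which is a curve along which $u-v\ge0$ with value $0$ and vanishing first derivative at $p$. This yields $|H^\hhh_S(p)|\ge H^\hhh_{C_R}(p)=1/R>0$, with the inequality direction fixed by orientation. In particular $H^\hhh(p)\neq0$.

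The main obstacle I expect is bookkeeping in this last step: checking carefully that the first-order terms really coincide, and getting the signs and orientation right. As a fallback, I would directly use the explicit expression \eqref{hotinvinlemma} after noting that only second derivatives along $J(\vh)$ enter. Smoothness of $S$ near $p$ is guaranteed since $p\notin S_0$.
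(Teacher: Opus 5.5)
Your proof is correct, and it takes a genuinely different route from the paper's. The paper argues by contradiction and relies on the structure theory of $C^2$ surfaces with $H^\hhh\equiv 0$ from \cite{MR2435652}. There, characteristic curves are horizontal straight segments that can only end at points of $S_0$, and Theorem 4.15 of that reference lets such a segment be continued through an isolated characteristic point or through a $C^1$ curve of characteristic points. Hence a maximal segment cannot exist on a compact surface. You instead pick the point $p$ farthest from the $t$-axis. There $T_pS$ is vertical, so $p\notin S_0$ for free. This sidesteps the characteristic set entirely, which is precisely where the paper needs the heavy machinery. A second-order comparison with the cylinder $C_R$ then gives $|H^\hhh(p)|\geq 1/R$. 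Your argument is elementary and uses only $C^2$ regularity near one non-characteristic point. It is also quantitative: left translations preserve $H^\hhh$ and map $C_R$ onto any Euclidean vertical cylinder of radius $R$, so $\sup_{S\setminus S_0}|H^\hhh|\geq 1/R$ whenever $S$ is enclosed in a vertical cylinder of radius $R$. The paper's route is shorter on the page, but it yields only the qualitative statement and depends on the full description of $S_0$.

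When you write it up, fix the bookkeeping you flagged as follows.

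\emph{The vector field.} Do not let $J(\vh)$ in $J(\vh)J(\vh)u$ be $J(\nabla^\hhh u/|\nabla^\hhh u|)$. That field is tangent to every level set of $u$, so $J(\vh)J(\vh)u\equiv 0$, and all the second derivatives would hide in your ``lower-order'' terms. Use a single field for both $u$ and $v$, most conveniently the left-invariant horizontal field $W$ with $W(p)=J(\vh)(p)$. From $H^\hhh=\langle\nabla_{J(\vh)}\vh,J(\vh)\rangle$ and \eqref{phflat}, the term containing the derivative of $|\nabla^\hhh u|$ is paired against $\langle\nabla^\hhh u,J(\vh)\rangle=0$. One therefore gets exactly $H^\hhh(p)=WWu(p)/|\nabla^\hhh u(p)|$, with no lower-order terms. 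Moreover $WW(u-v)(p)\geq 0$ because $u-v$ has a local minimum at $p$; this holds along any curve through $p$, not only tangent ones.

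\emph{The defining functions.} Admissible choices are $v=r-R$ and $u=r-g(\theta,t)$, where $S$ is locally the radial graph $r=g(\theta,t)$ with $g\leq R$, $g(p)=R$ and $\nabla g(p)=0$.

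\emph{The sign.} With this choice both horizontal normals point away from the axis. In the paper's convention the cylinder has $H^\hhh=+1/R$ for that normal (\Cref{exvertcil}), so you get $H^\hhh_S(p)\geq 1/R$. Your parenthetical about the normal pointing toward the axis has the sign reversed, which is harmless for the conclusion $H^\hhh(p)\neq 0$.
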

    \begin{proof}
        Assume not by contradiction. Then $H^\hhh\equiv 0$ on $S\setminus S_0$. Let $\tilde p\in S\setminus S_0$. By \cite[Theorem 4.8]{MR2435652}, and since $S$ is closed, there exists $s_1\in(0,\infty)$ maximal with the property that the straight line segment $\{\tilde p\cdot\delta_s\left(J(\vh_{\tilde p})\right)\,:\,0\leq s\leq s_1\}$ is contained in $S$. Set $q=\tilde p\cdot\delta_{s_1}\left(J(\vh_{\tilde p})\right)$. Again by \cite[Theorem 4.8]{MR2435652}, $q\in S_0$. Therefore, by \cite[Theorem 4.15]{MR2435652}, $q$ is either an isolated characteristic point or it is part of a $C^1$-curve of characteristic points. In both cases, \cite[Theorem 4.15]{MR2435652} ensures the non-maximality of $s_1$, a contradiction.
    \end{proof}
    \begin{proposition}\label{docarmo}
        Let $S\subseteq\hh^1$ be an embedded, closed surface. Let $\Phi$ be a  non-characteristic variation of $S$. Denote by $\X$ its velocity. If $\Phi$ is first-order area-preserving, then there exists a non-characteristic, area-preserving variation with velocity $\X$.
    \end{proposition}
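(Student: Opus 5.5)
We follow the classical argument of \cite[Lemma 2.1]{MR0731682}, adapted to the non-characteristic sub-Riemannian setting. By \Cref{nominimalandclosed} there is $p_0\in S\setminus S_0$ with $H^\hhh(p_0)\neq 0$. By continuity we pick a smooth function $\eta\in C^\infty_c(S\setminus S_0)$, supported in a small neighbourhood of $p_0$ where $H^\hhh$ has constant sign, with
\begin{equation*}
\int_S\eta H^\hhh\,d\sigma^\hhh\neq 0 .
\end{equation*}
We extend $\eta(\vh+\alpha T)$ to a compactly supported vector field $\Y_0$ on $\hh^n$ vanishing near $S_0$ (using the extension \eqref{normcondist} of $\vh$ near non-characteristic points and a cutoff). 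Then $\langle \Y_0,\vh+\alpha T\rangle=\eta(1+\alpha^2)$ on $S$. Choosing $\eta$ of one sign and $H^\hhh$ of one sign on its support, we still get $\int_S\eta(1+\alpha^2)H^\hhh\,d\sigma^\hhh\neq0$. This is the only place where \Cref{nominimalandclosed} enters.

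Next we set up a two-parameter family. Let $\Phi$ be the given first-order area-preserving non-characteristic variation with velocity $\X$, and let $\Psi_r$ be the flow of $\Y_0$. Define
\begin{equation*}
F(\tau,r)\coloneqq\area^\hhh\big(\Psi_r(\Phi_\tau(S))\big).
\end{equation*}
Since both maps are supported in a fixed compact set away from $S_0$, all surfaces $\Psi_r(\Phi_\tau(S))$ for small $(\tau,r)$ coincide with $S$ near $S_0$. They stay non-characteristic on the moving part, so $F$ is smooth near $(0,0)$. By \eqref{variazioneprimaareasubriemmain},
\begin{equation*}
\partial_r F(0,0)=\int_S\eta(1+\alpha^2)H^\hhh\,d\sigma^\hhh\neq 0,\qquad \partial_\tau F(0,0)=\delta\area^\hhh(S)[\Phi]=0 .
\end{equation*}
The implicit function theorem then gives a smooth $r(\tau)$ with $r(0)=0$ and $F(\tau,r(\tau))=F(0,0)=\area^\hhh(S)$. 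Differentiating this identity yields $r'(0)=-\partial_\tau F(0,0)/\partial_r F(0,0)=0$.

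Finally we define $\tilde\Phi_\tau\coloneqq\Psi_{r(\tau)}\circ\Phi_\tau$. This is a smooth family of diffeomorphisms with $\tilde\Phi_0=\mathrm{id}$, compactly supported away from $S_0$, hence a non-characteristic variation. It is area-preserving by construction, and its velocity is
\begin{equation*}
\X+r'(0)\Y_0=\X .
\end{equation*}

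The main obstacle is the smoothness of $F$ in $(\tau,r)$, that is, the validity of the first variation formula \eqref{variazioneprimaareasubriemmain} at $(0,0)$ for both one-parameter directions. This is exactly where the non-characteristic support is used: near the moving region the horizontal normal, $\alpha$ and $d\sigma^\hhh$ are smooth functions of the embedding, so differentiation under the integral is legitimate.
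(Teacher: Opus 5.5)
Your argument is correct and is essentially the paper's: a two-parameter family with a transversal area-changing direction supplied by \Cref{nominimalandclosed}, followed by the implicit function theorem, which gives $r(0)=r'(0)=0$. The only difference is cosmetic. You compose the given $\Phi_\tau$ with the flow of an auxiliary field, whereas the paper builds the two-parameter family directly as $p\cdot(\tau\X(p)+\sigma\A(p))$. In both cases the resulting variation has velocity $\X$.
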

    \begin{proof}
        Let $\A$ be a smooth vector field with compact support outside $S_0$. Define the smooth map $\tilde\Phi$ by
        \begin{equation*}
            \tilde\Phi(\tau,\sigma,p)\coloneqq p\cdot\left(\tau\X(p)+\sigma\A(p)\right),\qquad\tau,\sigma\in I,\,p\in\hh^1.
        \end{equation*}
        Notice that $\area^\hhh\left(\tilde\Phi(0,0,S)\right)=\area^\hhh (S)$. Moreover, arguing as in the proof of \Cref{costruirevariazionesubriem} and by \eqref{variazioneprimaareasubriemmain}, 
        \begin{equation*}
            \left.\frac{\partial}{\partial \tau}\right|_{\tau=0}\area^\hhh\left(\tilde\Phi(\tau,0,S)\right)=\delta\area^\hhh(S)[\X]=0
        \end{equation*}
        and
        \begin{equation*}
            \left.\frac{\partial}{\partial \sigma}\right|_{\sigma=0}\area^\hhh\left(\tilde\Phi(0,\sigma,S)\right)=\delta\area^\hhh(S)[\A]=\int_S\left(\langle\A,\vh\rangle+\alpha\langle\A,T\rangle\right)H^\hhh\,d\sigma^\hhh.
        \end{equation*}
       By \Cref{nominimalandclosed}, $H^\hhh$ cannot vanish identically on $S\setminus S_0$. 
       %Otherwise, \cite[Theorem 4.17]{MR2435652} and \cite[Theorem 4.12]{MR2435652} would imply that $\area^\hhh(S)=0$, a contradiction. 
       Therefore, we can choose $\A$ such that 
        \begin{equation*}
            \left.\frac{\partial}{\partial \sigma}\right|_{\sigma=0}\area^\hhh\left(\tilde\Phi(0,\sigma,S)\right)\neq 0.
        \end{equation*}
        Therefore, the implicit function theorem yields the existence of a function $\sigma=\sigma(\tau)$, smooth in a neighborhood of $0$, such that $\sigma(0)=0$, $\sigma'(0)=0$ and $\area^\hhh\left(\tilde\Phi(\tau,\sigma(\tau),S)\right)\equiv\area^\hhh(S)$. Then $\tilde\Phi(\tau,p)\coloneqq\tilde\Phi(\tau,\sigma(\tau),p)$ satisfies the desired requirements.
    \end{proof}
    
    According to the above definitions, we say that $S$ is an \emph{area-preserving critical point along non-characteristic variations} (for the total mean curvature) if $\delta\tmc^\hhh(S)[\Phi]=0$ for any area-preserving, non-characteristic variation. As customary, stationarity is equivalent to the validity of an appropriate Euler-Lagrange equation, as well as to the stationarity of a suitable penalized functional. We just recall the following, elementary, linear algebra result.
    \begin{lemma}\label{lemmalinearalgebramoltiplicatore}
    Let $V$ be a vector space. Let $L_1,L_2:V\to\rr$ be linear functionals. Then, $\ker L_2\subseteq\ker L_1$ if and only if there exists $\lambda\in\rr$ such that $L_1=\lambda L_2$.
\end{lemma}
%\begin{proof}
 %   If $L_2=0$, then $\ker L_2=V$. Then, by assumption, $L_1=0$, and the thesis follows for any $\lambda\in\rr$. Assume $L_2\neq 0$. Then there exists $v_0$ such that $L_2(v_0)=1$. Set $\lambda=L_1(v_0)$. Let $v\in V$. Notice that $v=\left(v-L_2(v)v_0\right)+L_2(v)v_0$. Since $L_2(v_0)=1$, then $L_2(v-L_2(v)v_0)=0$. Therefore, by assumption, $L_1(v-L_2(v)v_0)=0$. We conclude that $L_1(v)=L_1\left(L_2(v)v_0\right)=\lambda L_2(v)$.
%\end{proof}
    \begin{proposition}[Characterization of stationarity]\label{characterizationcriticalpointsproposition}
         Let $S\subseteq\hh^1$ be an embedded, closed surface. The following are equivalent: 
         \begin{itemize}
             \item [(i)] $S$ is an area-preserving critical point along non-characteristic variations, i.e.
             \begin{equation*}
                 \delta\tmc^\hhh(S)[\Phi]=0
             \end{equation*}
             for any area-preserving, non-characteristic variation $\Phi$;
             \item [(ii)] there exist $\lagrange\in\rr$ such that
             \begin{equation}\label{variazioneprimazerotmcstatement}
                 \delta\tmc^\hhh(S)[\Phi]-\lagrange \delta\area^\hhh(S)[\Phi]=0
             \end{equation}
          for any non-characteristic variation $\Phi$.   
         \end{itemize}
         In these cases, $\lagrange$ is unique, and 
         \begin{equation}\label{derivationeulerlagrangeminkgeneral}
                 -4\left(J(\vh)\alpha+\alpha^2\right)=\lagrange H^\hhh\qquad\text{on }S\setminus S_0.
             \end{equation}
    \end{proposition}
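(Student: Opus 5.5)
The plan is to reduce everything to the two linear functionals
\[
L_1(\varphi)\coloneqq-4\int_S\varphi\left(J(\vh)\alpha+\alpha^2\right)\,d\sigma^\hhh,\qquad L_2(\varphi)\coloneqq\int_S\varphi H^\hhh\,d\sigma^\hhh,
\]
defined on $V\coloneqq C^\infty_c(S\setminus S_0)$. By \eqref{primavariaztmcdtatementmain} and \eqref{variazioneprimaareasubriemmain}, for any non-characteristic variation $\Phi$ we have $\delta\tmc^\hhh(S)[\Phi]=L_1(\varphi)$ and $\delta\area^\hhh(S)[\Phi]=L_2(\varphi)$, where $\varphi=\langle\X,\vh+\alpha T\rangle$. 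Conversely, every $\varphi\in V$ is realized by some non-characteristic variation. To see this, take a compactly supported smooth field $\X$ with $\X|_S=\varphi\,\frac{\vh+\alpha T}{1+\alpha^2}$, which extends away from $S_0$ since $\varphi$ vanishes near $S_0$. Then apply \Cref{costruirevariazionesubriem} with $\Z=0$.

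\emph{(ii)$\Rightarrow$(i).} If $\Phi$ is area-preserving, then $\delta\area^\hhh(S)[\Phi]=0$, so \eqref{variazioneprimazerotmcstatement} gives $\delta\tmc^\hhh(S)[\Phi]=0$ directly.

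\emph{(i)$\Rightarrow$(ii).} We show $\ker L_2\subseteq\ker L_1$ and then invoke \Cref{lemmalinearalgebramoltiplicatore}. Take $\varphi\in\ker L_2$ and realize it as above by a non-characteristic variation $\Phi$ with velocity $\X$. This $\Phi$ is first-order area-preserving, so \Cref{docarmo} yields a non-characteristic, area-preserving variation $\tilde\Phi$ with the same velocity $\X$. The first variation depends only on $\varphi$, which is computed from $\X$. Hence (i) gives $L_1(\varphi)=\delta\tmc^\hhh(S)[\tilde\Phi]=0$. The lemma then provides $\lagrange$ with $L_1=\lagrange L_2$ on $V$, which is \eqref{variazioneprimazerotmcstatement}.

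\emph{Uniqueness and Euler--Lagrange.} By \Cref{nominimalandclosed}, $L_2\neq0$: choose $\varphi$ supported near a point where $H^\hhh\neq0$. Therefore $\lagrange$ is unique. Finally, $L_1=\lagrange L_2$ reads
\[
\int_S\varphi\Big(-4\big(J(\vh)\alpha+\alpha^2\big)-\lagrange H^\hhh\Big)\,d\sigma^\hhh=0\qquad\text{for all }\varphi\in C^\infty_c(S\setminus S_0).
\]
The integrand factor is smooth on $S\setminus S_0$ and $d\sigma^\hhh$ has a positive density there, so the fundamental lemma of the calculus of variations gives \eqref{derivationeulerlagrangeminkgeneral} pointwise.

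The only delicate step is realizing an arbitrary $\varphi$ by a compactly supported non-characteristic variation. This needs an extension of the normal-type field off $S$ with support away from $S_0$, which is routine via a tubular neighborhood of the non-characteristic part.
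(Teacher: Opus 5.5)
Your proof is correct and follows the paper's argument. (ii)$\Rightarrow$(i) is immediate. For (i)$\Rightarrow$(ii), \Cref{docarmo} upgrades a first-order area-preserving variation to an area-preserving one with the same velocity, and then \Cref{lemmalinearalgebramoltiplicatore} gives $\lagrange$. Uniqueness of $\lagrange$ and \eqref{derivationeulerlagrangeminkgeneral} come from $H^\hhh\not\equiv 0$ (\Cref{nominimalandclosed}) and the first variation formulas.

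One addition of yours is worth keeping: you explicitly realize every $\varphi\in C^\infty_c(S\setminus S_0)$ by a variation, via $\X|_S=\varphi(\vh+\alpha T)/(1+\alpha^2)$ and \Cref{costruirevariazionesubriem}. This pins down the vector space on which the linear-algebra lemma is applied, which the paper leaves implicit.
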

     \begin{proof}
         The proof of (ii)$\implies$(i) follows because area-preserving variations are first-order area-preserving. We prove (i)$\implies$(ii). Let $\Phi$ be any non-characteristic variation. Denote by $\X$ its velocity.  Assume that $\delta\area^\hhh(S)[\Phi]$=0. By \Cref{docarmo}, there exists an area-preserving, non-characteristic variation $\tilde\Phi$ with velocity $\X$. By (i), $\delta\tmc^\hhh(S)[\Phi]=\delta\tmc^\hhh(S)[\tilde\Phi]=0$. Therefore, (ii) follows by \Cref{lemmalinearalgebramoltiplicatore}. In addition, \eqref{derivationeulerlagrangeminkgeneral} easily follows by (ii), \eqref{variazioneprimaareasubriemmain} and \eqref{primavariaztmcdtatementmain}. Finally, uniqueness of $\lagrange$ follows by \eqref{derivationeulerlagrangeminkgeneral} and the already known fact that $H^\hhh$ cannot vanish identically on $S\setminus S_0$.
     \end{proof}
     One may wonder whether the above-mentioned notions of stationarity are equivalent to stationarity for the Minkowski quotient $\rifun^\hhh_{\mathrm{mink}}$. One implication is trivial. 
\begin{proposition}\label{propimplicazionenonequaivmeovtrbvnrtvonrgvo}
    If $S$ is a critical point for $\rifun^\hhh_{\mathrm{mink}}$ along non-characteristic variations, then $S$ is an area-preserving critical point for $\tmc^\hhh$ along non-characteristic variations.
\end{proposition}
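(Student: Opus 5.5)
The argument is a direct first-variation computation for the quotient, followed by an application of \Cref{characterizationcriticalpointsproposition}. Let $S$ be critical for $\rifun^\hhh_{\mathrm{mink}}$ along non-characteristic variations, and fix an arbitrary non-characteristic variation $\Phi$. Since $\tau\mapsto\area^\hhh(\Phi_\tau(S))$ and $\tau\mapsto\tmc^\hhh(\Phi_\tau(S))$ are differentiable at $\tau=0$ by \Cref{varformulaaererfvjecnvorevjnprop} and \Cref{varfortmccnrvrjvnefrjcne}, and $\area^\hhh(S)>0$, the chain rule gives
\begin{equation*}
0=\left.\frac{d}{d\tau}\right|_{\tau=0}\rifun^\hhh_{\mathrm{mink}}(\Phi_\tau(S))=\area^\hhh(S)^{-\frac{2}{3}}\left(\delta\tmc^\hhh(S)[\Phi]-\frac{2}{3}\,\frac{\tmc^\hhh(S)}{\area^\hhh(S)}\,\delta\area^\hhh(S)[\Phi]\right).
\end{equation*}

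Set
\begin{equation*}
\lagrange\coloneqq\frac{2\tmc^\hhh(S)}{3\area^\hhh(S)},
\end{equation*}
which depends only on $S$ and not on $\Phi$. The identity above then reads $\delta\tmc^\hhh(S)[\Phi]-\lagrange\,\delta\area^\hhh(S)[\Phi]=0$ for every non-characteristic variation $\Phi$. This is exactly condition (ii) of \Cref{characterizationcriticalpointsproposition}, so by (ii)$\implies$(i) there $S$ is an area-preserving critical point for $\tmc^\hhh$ along non-characteristic variations. One could also conclude directly: an area-preserving $\Phi$ has $\delta\area^\hhh(S)[\Phi]=0$, hence $\delta\tmc^\hhh(S)[\Phi]=0$.

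No step is a genuine obstacle. The only point to check is that differentiating the quotient is legitimate, which needs $\area^\hhh(S)>0$; this holds for a closed surface, and \Cref{nominimalandclosed} in any case guarantees a nontrivial non-characteristic part. As a byproduct, the multiplier is identified as $\lagrange=\frac{2}{3}\tmc^\hhh(S)/\area^\hhh(S)$. This explains why the converse fails: a constrained critical point is critical for the quotient only if its Lagrange multiplier satisfies this extra relation with the ratio $\tmc^\hhh/\area^\hhh$.
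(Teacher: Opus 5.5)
Your proof is correct and is essentially the paper's argument. The paper restricts directly to area-preserving non-characteristic variations, for which the derivative of the quotient reduces to $\area^\hhh(S)^{-\frac{2}{3}}\delta\tmc^\hhh(S)[\Phi]$; this is exactly your closing one-line alternative. Your computation for arbitrary variations, which identifies the multiplier $\lagrange=\frac{2\tmc^\hhh(S)}{3\area^\hhh(S)}$, is the same one the paper carries out later when it singles out $S_{\frac{1}{4}}$.
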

\begin{proof}
    Let $\Phi$ be an area-preserving non-characteristic variation. Then
    \begin{equation*}
        0=  \left.\frac{d}{dt}\right|_{t=0}\rifun^\hhh_\mathrm{mink}\left(\Phi_t(S)\right)=\sigma^\hhh(S)^{-\frac{2}{3}}\left(\delta^\hhh(S)[\Phi]\right),
    \end{equation*}
    whence $\delta^\hhh(S)[\Phi]=0$, and the thesis follows.
\end{proof}
Nevertheless, as we will discuss in \Cref{sec_rotinvcritpointstmc}, the converse implication is surprisingly false. 
     Finally, we discuss some equivalent notions of stability of critical points. As in many classical settings (cf. e.g. \cite{MR0731682,hkleon}), while the first order behavior of $\tmc^\hhh$ agrees with that of $\tmc^\hhh-\lagrange\area^\hhh$, these two formulations are no longer equivalent at second-order.
     \begin{proposition}[Characterization of stability]\label{carstabprop}
          Let $S\subseteq\hh^1$ be an embedded closed surface. Assume that $S$ is an area-preserving critical point along non-characteristic variations. The following are equivalent:
          \begin{itemize}
              \item [(i)] $S$ is \emph{area-preserving stable along non-characteristic variations}, i.e., by definition,
              \begin{equation*}
                  \delta^2\tmc^\hhh(S)[\Phi]\geq 0
              \end{equation*}
              for any area-preserving non-characteristic variation $\Phi$;
              \item [(ii)] if $\lagrange$ is as in \eqref{derivationeulerlagrangeminkgeneral}, then
              \begin{equation*}
                  \delta^2\tmc^\hhh(S)[\Phi]-\lagrange\delta^2\area^\hhh(S)[\Phi]\geq 0
              \end{equation*}
              for any first-order area-preserving non-characteristic variation $\Phi$. 
          \end{itemize}
     \end{proposition}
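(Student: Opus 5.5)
The plan is to compare the second variations of $\tmc^\hhh$ along an area-preserving variation and along a merely first-order area-preserving one, using \Cref{varformulaaererfvjecnvorevjnprop} and \Cref{varfortmccnrvrjvnefrjcne}. The key structural point is how the $\psi$-terms combine. By \eqref{variazprimasubriemmain} and the stationarity of $S$ (\Cref{characterizationcriticalpointsproposition}(ii), which holds for any admissible compactly supported function in place of $\varphi$), we have
\begin{equation*}
\delta\tmc^\hhh(S)[\psi]-\lagrange\,\delta\area^\hhh(S)[\psi]=0.
\end{equation*}
Hence, for any non-characteristic $\Phi$,
\begin{equation*}
\delta^2\tmc^\hhh(S)[\Phi]-\lagrange\,\delta^2\area^\hhh(S)[\Phi]=Q(\varphi),
\end{equation*}
where $Q$ is the quadratic form
\begin{equation*}
Q(\varphi)=4\int_S\Big(-\s\varphi\,J(\vh)\varphi+\varphi^2\left(2\s\alpha-H^\hhh\alpha^2\right)\Big)d\sigma^\hhh-\lagrange\int_S\Big(\left(J(\vh)\varphi\right)^2-4\varphi^2\left(J(\vh)\alpha+\alpha^2\right)\Big)d\sigma^\hhh .
\end{equation*}
In particular, $Q$ depends only on $\varphi$, that is, only on the velocity $\X$.

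For (i)$\implies$(ii), let $\Phi$ be a first-order area-preserving non-characteristic variation with velocity $\X$. By \Cref{docarmo} there is an area-preserving non-characteristic $\tilde\Phi$ with the same velocity, hence the same $\varphi$. Since $\area^\hhh(\tilde\Phi_\tau(S))$ is constant, $\delta^2\area^\hhh(S)[\tilde\Phi]=0$. Therefore
\begin{equation*}
Q(\varphi)=\delta^2\tmc^\hhh(S)[\tilde\Phi]-\lagrange\cdot 0\geq 0
\end{equation*}
by (i). Because the left-hand side of (ii) for $\Phi$ also equals $Q(\varphi)$, (ii) follows.

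For (ii)$\implies$(i), let $\Phi$ be area-preserving and non-characteristic. Then $\Phi$ is first-order area-preserving and $\delta^2\area^\hhh(S)[\Phi]=0$, so
\begin{equation*}
\delta^2\tmc^\hhh(S)[\Phi]=\delta^2\tmc^\hhh(S)[\Phi]-\lagrange\,\delta^2\area^\hhh(S)[\Phi]\geq 0.
\end{equation*}

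The main obstacle is justifying that $\delta\tmc^\hhh(S)[\psi]-\lagrange\,\delta\area^\hhh(S)[\psi]=0$, that is, that \eqref{variazioneprimazerotmcstatement} holds when evaluated at the function $\psi\in C^\infty_c(S\setminus S_0)$ rather than at a variation. I would handle this via the Euler--Lagrange equation \eqref{derivationeulerlagrangeminkgeneral}. By \eqref{variazioneprimaareasubriemmain} and \eqref{primavariaztmcdtatementmain}, this combination equals
\begin{equation*}
\int_S\psi\left(-4\left(J(\vh)\alpha+\alpha^2\right)-\lagrange H^\hhh\right)d\sigma^\hhh ,
\end{equation*}
which vanishes pointwise on $S\setminus S_0$ by \eqref{derivationeulerlagrangeminkgeneral}. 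This avoids having to realize $\psi$ as the normal velocity of an actual variation, which would otherwise require invoking \Cref{costruirevariazionesubriem}.
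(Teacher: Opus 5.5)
Your proposal is correct and follows essentially the same route as the paper. In both, \Cref{docarmo} replaces a first-order area-preserving variation by an area-preserving one with the same velocity (hence the same $\varphi$), the $\psi$-terms are removed using stationarity, and $\delta^2\area^\hhh(S)[\tilde\Phi]=0$ eliminates the $\lagrange$-terms. The only difference is organizational: you isolate the quadratic form $Q(\varphi)$ first and obtain $\delta\tmc^\hhh(S)[\psi]-\lagrange\,\delta\area^\hhh(S)[\psi]=0$ directly from the pointwise Euler--Lagrange equation \eqref{derivationeulerlagrangeminkgeneral}. The paper instead cites \Cref{characterizationcriticalpointsproposition} for both $\psi$ and $\tilde\psi$ and chains the equalities.
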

     \begin{proof}
         The implication (ii)$\implies$(i) is trivial. We prove (i)$\implies$(ii). Let $\Phi$ be a first-order area-preserving, non-characteristic variation. Denote by $\X$ and $\Z$ its velocity and acceleration respectively, and let $\varphi,\psi$ be as in \eqref{lefunzioniimportanti}. By \Cref{docarmo}, there exists an area-preserving non-characteristic variation $\tilde \Phi$ with velocity $\X$ and a suitable acceleration $\tilde \Z$. Accordingly, if $\tilde\varphi,\tilde \psi$ are as in \eqref{lefunzioniimportanti}, then $\tilde\varphi=\varphi$. Since $\tilde \Phi$ is area-preserving,  
         \begin{equation}\label{auxunocarastab}         
                 0=\delta^2\area^\hhh(S)[\tilde\Phi]
\overset{\eqref{variazsecondaareastatementsubriem}}{=}\delta\area^\hhh(S)[\tilde\psi]+\int_S\Big(\left(J(\vh)\varphi\right)^2-4\varphi^2\left(J(\vh)\alpha+\alpha^2\right)\Big)\,d\sigma^\hhh.
         \end{equation}
         Moreover, by \Cref{characterizationcriticalpointsproposition},
         \begin{equation}\label{auxduecarastab}
             \delta\tmc^\hhh(S)[\psi]-\lagrange\delta\area^\hhh(S)[\psi]=0= \delta\tmc^\hhh(S)[\tilde \psi]-\lagrange\delta\area^\hhh(S)[\tilde \psi].
         \end{equation}
         Therefore
         \begin{equation*}
             \begin{split}
                 \delta^2\tmc^\hhh(S)[\Phi]-\lagrange\delta^2\area^\hhh(S)[\Phi]\overset{\eqref{variazsecondaareastatementsubriem},\eqref{variazsecondatmcsubriemmainstatement}}&{=}\delta\tmc^\hhh(S)\left[\psi\right]+4\int_S\Big(-\s\varphi\,J(\vh)\varphi+\varphi^2\left(2\s\alpha-H^\hhh\alpha^2\right)\Big)\,d\sigma^\hhh\\
                 &\quad                 -\lagrange\delta\area^\hhh(S)\left[\psi\right]-\lagrange\int_S\Big(\left(J(\vh)\varphi\right)^2-4\varphi^2\left(J(\vh)\alpha+\alpha^2\right)\Big)\,d\sigma^\hhh\\
                 \overset{\eqref{auxduecarastab}}&{=}\delta\tmc^\hhh(S)[\tilde \psi]+4\int_S\Big(-\s\varphi\,J(\vh)\varphi+\varphi^2\left(2\s\alpha-H^\hhh\alpha^2\right)\Big)\,d\sigma^\hhh\\
                 &\quad                 -\lagrange\delta\area^\hhh(S)[\tilde \psi]-\lagrange\int_S\Big(\left(J(\vh)\varphi\right)^2-4\varphi^2\left(J(\vh)\alpha+\alpha^2\right)\Big)\,d\sigma^\hhh\\
                 \overset{\eqref{auxunocarastab}}&{=}\delta\tmc^\hhh(S)[\tilde \psi]+4\int_S\Big(-\s\varphi\,J(\vh)\varphi+\varphi^2\left(2\s\alpha-H^\hhh\alpha^2\right)\Big)\,d\sigma^\hhh\\
                 \overset{\eqref{variazsecondatmcsubriemmainstatement}}&{=}\delta^2\tmc^\hhh(S)[\tilde\Phi],
             \end{split}
         \end{equation*}
         and the thesis follows by (i).
     \end{proof}
\section{Total mean curvature: rotationally invariant critical points}\label{sec_rotinvcritpointstmc}
In this section we characterize rotationally invariant surfaces in $\hh^1$ which are mean convex area-preserving critical points of $\tmc^\hhh$ along non-characteristic variations. Precisely, we explicitly solve the Euler-Lagrange equation \eqref{derivationeulerlagrangeminkgeneral}. To this aim, we specialize it to a rotationally invariant surface. Indeed, by \eqref{alfasurotinvggggg} and \eqref{jvinlocalcoords},
\begin{equation}\label{lemmajvalfapiualfaquadro}
         J(\vh)\alpha+\alpha^2=\dot t(s)\left(\frac{\dot x(s)^2\dot t(s)-x(s)\left(\dot x(s)\ddot t(s)-\ddot x(s)\dot t(s)\right)}{\left(\Dot t(s)^2+x(s)^2\Dot x(s)^2\right)^2}\right).
    \end{equation}
%\begin{lemma}
 %   It holds that
 %   \begin{equation}\label{lemmajvalfapiualfaquadro}
  %       J(\vh)\alpha+\alpha^2=\dot t(s)\left(\frac{\dot x(s)^2\dot t(s)-x(s)\left(\dot x(s)\ddot t(s)-\ddot x(s)\dot t(s)\right)}{\left(\Dot t(s)^2+x(s)^2\Dot x(s)^2\right)^2}\right)
  %  \end{equation}
  %  on $S\setminus S_0$.
%\end{lemma}
%\begin{proof}
%   By \eqref{alfasurotinvggggg} and \eqref{jvinlocalcoords},
   % \begin{equation*}
   %     \begin{split}
   %         J(\vh)\alpha
    %        %&            =-\left(\frac{x}{\sqrt{\Dot t^2+x^2\Dot x^2}}\right)\frac{\partial \alpha}{\partial s}\\
     %       =\left(\frac{x}{\sqrt{\Dot t^2+x^2\Dot x^2}}\right)\frac{d}{ds}\left(\frac{\dot x}{\sqrt{\Dot t^2+x^2\Dot x^2}}\right)
 %         %  &=\left(\frac{x}{\sqrt{\Dot t^2+x^2\Dot x^2}}\right)\left(\frac{\ddot x\left(\Dot t^2+x^2\Dot x^2\right)-\dot x\left(\dot t\ddot t+x\dot x^3+x^2\dot x\ddot x\right)}{\left(\Dot t^2+x^2\Dot x^2\right)^{\frac{3}{2}}}\right)\\
     %       =-\frac{x\dot t(\dot x\ddot t-\ddot x\dot t)+x^2\dot x^4}{\left(\Dot t^2+x^2\Dot x^2\right)^2}.
    %    \end{split}
   % \end{equation*}
 %  Recalling again \eqref{alfasurotinvggggg}, the thesis follows.
 %%   \begin{equation*}
   %     \langle J(\vh),\nabla\alpha\rangle+\alpha^2=\frac{\dot x^2\dot t^2-x\dot t(\dot x\ddot t-\ddot x\dot t)}{\left(\Dot t^2+x^2\Dot x^2\right)^2}.
 %   \end{equation*}
  %  \end{proof}
 Therefore, combining \eqref{hotinvinlemma} with \eqref{lemmajvalfapiualfaquadro}, then \eqref{derivationeulerlagrangeminkgeneral} reads as
 \begin{equation*}
     -4\dot t\left(\frac{\dot x^2\dot t-x\left(\dot x\ddot t-\ddot x\dot t\right)}{\left(\Dot t^2+x^2\Dot x^2\right)^2}\right)=\lagrange\left(\dfrac{x^3(\dot{x}\ddot{t}-\ddot{x}\dot{t})+\dot{t}^3}{x\left(x^2\dot{x}^2+\dot{t}^2\right)^{3/2}}\right).
 \end{equation*}
 Equivalently, letting $L\in\rr$ be such that $\lagrange=4L$, we need to solve
 \begin{equation}\label{EUlerLagrange}
     x\dot t\Big(\dot x^2\dot t-x\left(\dot x\ddot t-\ddot x\dot t\right)\Big)+L\sqrt{x^2\dot{x}^2+\dot{t}^2}\left(x^3(\dot{x}\ddot{t}-\ddot{x}\dot{t})+\dot{t}^3\right)=0.
 \end{equation}
 First, we show that there are no rotationally invariant closed surfaces satisfying \eqref{EUlerLagrange} with $L=0$. 
 \begin{proposition}\label{caralugualeazeroprop}
    Let $S$ be complete and without boundary. Assume that  \eqref{EUlerLagrange} holds with $L=0$. Then:
     \begin{itemize}
     \item [(i)] either $S$ is, up to vertical translations, the horizontal plane of \Cref{horizontalplanewxample};
         \item [(ii)] or $S$ is, up to dilations, the vertical cylinder of \Cref{exvertcil};
         \item [(iii)] or the profile of $S$ is $\gamma(s)=(\sqrt{as+b},\pm s)$ for some $a\neq 0$, $b\in\rr$, and  any $s\in\rr$ with $as+b>0$.
     \end{itemize}
 \end{proposition}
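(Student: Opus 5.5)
The plan is to exploit the fact that for $L=0$ the Euler-Lagrange equation \eqref{EUlerLagrange} factorizes, and then to glue the local solutions using the regularity of the profile.

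\textbf{Step 1: factorization.} Since $x>0$ on the profile, for $L=0$ equation \eqref{EUlerLagrange} reduces to
\begin{equation*}
\dot t\,\Big(\dot x^2\dot t-x\left(\dot x\ddot t-\ddot x\dot t\right)\Big)=0 .
\end{equation*}
Let $I$ be the parameter interval and split it into the closed set $C=\{s\in I:\dot t(s)=0\}$ and its open complement $U$.

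\textbf{Step 2: $\dot t$ vanishes on an interval.} On any open interval of $C$ the function $t$ is constant. Since $\gamma$ is regular, $x$ is strictly monotone there, so that piece of $S$ is an annulus of a horizontal plane $\{t=c\}$.

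\textbf{Step 3: the ODE on $U$.} On each component of $U$ we have $\dot t\neq0$, so we may reparametrize the profile by $t$, writing $x=x(t)$. With $\dot t=1$ and $\ddot t=0$ we get $\dot x\ddot t-\ddot x\dot t=-x''$, and the remaining factor becomes
\begin{equation*}
x'^2+xx''=0,\qquad\text{that is}\qquad (xx')'=0 .
\end{equation*}
Hence $x^2=at+b$ on each such component.
\begin{itemize}
\item If $a=0$, then $x$ is constant and the piece is a vertical cylinder, namely \Cref{exvertcil} up to the dilation $\delta_{x}$.
\item If $a\neq0$, the piece is the curve $(\sqrt{at+b},t)$. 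Reparametrizing by $s=\pm t$ gives case (iii).
\end{itemize}

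\textbf{Step 4: gluing.} This is the main obstacle: we must show that the pieces from Steps 2 and 3 cannot be glued together, and that $S$ therefore consists of a single type. The key observation is that on a component of $U$ the slope satisfies
\begin{equation*}
\frac{dt}{dx}=\frac{2x}{a}\quad (a\neq0),\qquad\text{or}\qquad \dot x\equiv0\quad (a=0).
\end{equation*}
Suppose a boundary point $s_0\in\partial U$ has $x(s_0)>0$. Then the unit tangent of $\gamma$ at $s_0$, which is continuous since $S$ is $C^2$, is the limit of tangents from $U$. These limiting tangents are non-horizontal, because $|dt/dx|$ stays bounded away from $0$ as long as $x$ stays away from $0$. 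This contradicts $\dot t(s_0)=0$. Consequently $\partial U$ can only contain points on the vertical axis, where the profile ends.

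By connectedness of the profile, either $U=\emptyset$ or $U=I$.
\begin{itemize}
\item If $U=\emptyset$, the profile is a horizontal ray $t=c$. Completeness forces it to reach the axis and extend to infinity, which gives the horizontal plane of \Cref{horizontalplanewxample} up to vertical translation.
\item If $U=I$, then $\gamma$ is a single solution of the ODE. Its constants $a$ and $b$ are constant along $I$, because $xx'$ is constant on each component of $U$ and $U=I$ is connected.
\end{itemize}

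\textbf{Step 5: maximal domain.} Completeness and the absence of boundary force the profile to be defined on its maximal domain. For the cylinder this domain is $t\in\mathbb{R}$. For $a\neq0$ it is the set $\{as+b>0\}$, a half-line on which the curve touches the axis at its finite end. This yields exactly alternatives (ii) and (iii).
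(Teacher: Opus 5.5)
Your proposal is correct and follows the paper's proof. Both factor the $L=0$ equation, get the horizontal plane when $\dot t\equiv 0$, and otherwise reparametrize by $\pm t$ to obtain $\frac{d}{ds}(x\dot x)=0$, hence $x^2=as+b$, with the cylinder for $a=0$ and case (iii) for $a\neq 0$. Your Step 4 spells out what the paper compresses into ``by uniqueness''. To make it airtight, run the tangent-limit argument at an endpoint lying in $I$ of a single component of $U$, so that $a$ is fixed and $|dx/dt|=|a|/(2x)$ stays bounded; applied over all of $U$ near $s_0$, different components could carry unbounded constants $a$.
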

 \begin{proof}
     Let $\gamma=(x,t):I\to\rr^2$ be the profile of $S$. Since $x>0$ on $I$, then $\dot t\big(\dot x^2\dot t-x\left(\dot x\ddot t-\ddot x\dot t\right)\big)=0$ on $I$. If $\dot t(\hat s)=0$ for some $\hat s\in I$, then, by uniqueness, $\dot t\equiv 0$. In this case, $S$ is, up to vertical translation, the horizontal plane of \Cref{horizontalplanewxample}. Assume instead $\dot t\neq 0$ on I. In this case we can choose either $\gamma(s)=(x(s),s)$ or $\gamma(s)=(x(s),-s)$. In both cases,
     \begin{equation*}
         0\overset{\eqref{EUlerLagrange}}{=}\dot x^2+x\ddot x=\frac{d}{ds}\left(x\dot x\right)=\frac{1}{2}\frac{d^2}{ds^2}\left(x^2\right).
     \end{equation*}
     Therefore, $x(s)^2=as+b$, for some $a,b\in\rr$. 
     If $a=0$, then $S$ is, up to dilations, the vertical cylinder of \Cref{exvertcil}. If $a\neq 0$, then $x(s)=\sqrt{as+b}$, where $I$ is defined by $as+b>0$. The thesis follows.
 \end{proof}
 Next, we provide an \emph{a priori} bound for $L$ in order for \eqref{EUlerLagrange} to be satisfied by a closed surface. Moreover, we show that closed solutions to \eqref{EUlerLagrange} are the union of two vertical graphs.
 \begin{proposition}\label{punticriticisonograficiproposizione}
     Let $S$ be a rotationally invariant closed surface. Since $S$ is closed, there exists $\hat s$ such that
     \begin{equation*}
         x(\hat s)=\max\{x(s),s\in \overline I\}. 
     \end{equation*}
     Up to dilations, we assume that $x(\hat s)=1$. If $S$ solves \eqref{EUlerLagrange}, then $L\in(0,1)$. If in addition $S$ is mean convex, $S$ is the union of two vertical graphs. 
 \end{proposition}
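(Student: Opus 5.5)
Evaluating \eqref{EUlerLagrange} at the point $\hat s$ where $x$ attains its maximum $x(\hat s)=1$ gives most of the claim. At an interior maximum of $x$ along a regular profile we have $\dot x(\hat s)=0$, hence $\dot t(\hat s)\neq 0$, and $\ddot x(\hat s)\le 0$. If $\hat s$ lies at an endpoint of $\overline I$, then $x(\hat s)=0$, which is impossible since $x>0$. Reparametrizing by arclength near $\hat s$ if convenient, \eqref{EUlerLagrange} at $\hat s$ becomes
\[
x\dot t\bigl(x\ddot x\dot t\bigr)+L|\dot t|\bigl(-x^3\ddot x\dot t+\dot t^3\bigr)=0,
\]
that is,
\[
\dot t^2\ddot x+L|\dot t|\dot t\bigl(\dot t^2-\ddot x\bigr)=0\qquad\text{at }\hat s .
\]
Set $\epsilon=\operatorname{sign}\dot t(\hat s)$ and $k=-\ddot x(\hat s)/\dot t(\hat s)^2$, normalizing $|\dot t(\hat s)|=1$, so $k\ge 0$. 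The equation reads $-k+\epsilon L(1+k)=0$, so $L=\epsilon k/(1+k)$. Hence $|L|<1$. The sign of $L$ is fixed by the orientation: since the profile is parametrized counterclockwise and $\hat s$ is the rightmost point, $\dot t(\hat s)>0$, so $\epsilon=1$ and $L\in[0,1)$. By \Cref{caralugualeazeroprop}, $L=0$ is excluded, because none of the surfaces listed there is closed. Hence $L\in(0,1)$. (Equivalently, $L=0$ would force $k=0$, and then the uniqueness argument of \Cref{caralugualeazeroprop} applies.)

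For the graph property, I would show that $\dot t>0$ on the whole open interval $I$. The profile then splits at $\hat s$ into two arcs on which $x$ is monotone with $\dot t>0$. I expect to argue instead that $t$ is a strictly monotone function of $s$ on $I$, so that $x$ becomes a function of $t$ and $S$ is a radial graph over the $t$-axis; the "two vertical graphs" in the statement should then be the arcs where $\dot x\ge0$ and $\dot x\le 0$, each a graph $t=u(x,y)$ over a disk. This matches the Pansu profile $(\cos s,\tfrac s2+\tfrac{\sin 2s}4)$. Concretely, I will prove two things: $\dot t$ never vanishes in $I$, and $\dot x$ vanishes only at $\hat s$.

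Suppose $\dot t(s_0)=0$ at some $s_0\in I$. Then $\dot x(s_0)\neq0$, and the first term of \eqref{EUlerLagrange} vanishes at $s_0$, leaving $L\,x|\dot x|\,x^3\dot x\ddot t=0$, so $\ddot t(s_0)=0$. Mean convexity, through \eqref{hotinvinlemma}, gives that $x^3(\dot x\ddot t-\ddot x\dot t)+\dot t^3$ vanishes to higher order at $s_0$. I would then use the ODE \eqref{EUlerLagrange}, solved for $\ddot t$ when $\dot x\neq0$ (it is linear in the second derivatives), to conclude by uniqueness that $\dot t\equiv0$ near $s_0$. That would make the profile contain a horizontal segment, which is incompatible with closedness and with $L\neq0$. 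This step is where I expect the main difficulty, since the ODE is degenerate exactly where $\dot t=0$. If the uniqueness argument fails, I would fall back on a first-integral approach. Writing the equation in terms of the angle $\theta$ with $\dot x=\cos\theta$, $\dot t=\sin\theta$ (or in terms of $\alpha$), I expect \eqref{EUlerLagrange} to integrate once: by analogy with \eqref{pansuminkparamtheorem}, there should be a conserved quantity such as $x^2\alpha$-type combinations equal to constants. From such a first integral, $\dot t>0$ and the uniqueness of the critical point of $x$ follow directly. Mean convexity $H^\hhh\ge0$ then rules out the branch of the first integral on which $\dot t$ changes sign.

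Once $\dot t>0$ on $I$ is established, the points where $\dot x=0$ are critical points of $x$ along a curve $t\mapsto x(t)$. Using the relation $L=k/(1+k)$ derived above at every such point, together with the first integral or a convexity argument, each such point must be a strict maximum. So there is only one, namely $\hat s$, and $S$ is the union of the two vertical graphs over $\{x^2+y^2\le1\}$.
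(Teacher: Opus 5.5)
Your derivation of $L\in(0,1)$ is correct and coincides with the paper's Step 1. At $\hat s$ the equation reduces to $(1-L)\ddot x=-L\dot t^2$, the counterclockwise orientation gives $\dot t(\hat s)>0$, and $L=0$ is excluded by \Cref{caralugualeazeroprop}.

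The two-graph part, however, is not proved; there is a genuine gap. At another critical point $\tilde s$ of $x$ you cannot reuse $L=k/(1+k)$, because that relation relied on $x(\hat s)=1$. With $\dot x(\tilde s)=0$ and $\dot t(\tilde s)>0$, \eqref{EUlerLagrange} gives $x^2\ddot x\,(1-Lx)=-L\dot t^2$ at $\tilde s$. What makes the argument work is $1-Lx(\tilde s)>0$, which follows from $x(\tilde s)\le x(\hat s)=1$ and $L<1$; this is the reason for normalizing at the maximum. It forces $\ddot x(\tilde s)<0$, so every critical point of $x$ is a strict local maximum, and hence there is only one.

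You also need $\dot t(\tilde s)>0$. The paper obtains it from mean convexity. When a second critical point exists, it chooses one with $\kappa(\tilde s)\le0$. Then $\dot t(\tilde s)<0$ would give $H^\hhh(\tilde s)=(x^3\kappa+\dot t^3)/(x|\dot t|^3)<0$. On the other hand, $\dot t(\tilde s)>0$ gives $\ddot x(\tilde s)=-\kappa/\dot t\ge0$, contradicting the identity above.

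Your substitute, $\dot t>0$ on all of $I$, is left to a uniqueness argument you expect might fail, a conjectural first integral, and an unspecified use of mean convexity. Moreover, a horizontal segment is not by itself incompatible with $L\neq0$, since horizontal lines solve \eqref{EUlerLagrange} for every $L$.

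Your route can nevertheless be completed, because the equation is not degenerate where you fear. The coefficient of $\kappa=\dot x\ddot t-\ddot x\dot t$ in \eqref{EUlerLagrange} is $x^2(Lx\sqrt{x^2\dot x^2+\dot t^2}-\dot t)$. Where $\dot t=0$ this equals $Lx^4|\dot x|\neq0$, and the remaining terms vanish there. Write $\dot x=\cos\vartheta$, $\dot t=\sin\vartheta$, $\dot\vartheta=\kappa$ in arclength. Picard--Lindel\"of then shows that $\{\dot t=0\}$ is open in $I$. Being also closed and not containing $\hat s$, it is empty, so $\dot t>0$ on $I$. Applied at the highest point of a closed profile, the same argument also rules out tori.

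Combined with the identity $x^2\ddot x(1-Lx)=-L\dot t^2$ at critical points, this yields the two-graph structure without using mean convexity at all. As written, though, neither this non-degeneracy nor the inequality $1-Lx(\tilde s)>0$ appears in your proposal, so the second half of the statement remains unproved.
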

 \begin{proof}
    \textbf{Step 1.} Let $\hat s$ be as in the statement. In particular, $\dot x(\hat s)=0$ and $\ddot x(\hat s)\leq 0$. Since the parametrization is counterclockwise, then $\dot t(\hat s)>0$. By evaluating \eqref{EUlerLagrange} at $\hat s$, we infer that
     \begin{equation*}
         \ddot x(\hat s)\dot t(\hat s)^2+L\dot t(\hat s)\left(-\ddot x(\hat s)\dot t(\hat s)+\dot t(\hat s)^3\right)=0.
     \end{equation*}
     Since $\dot t(\hat s)\neq 0$, then $        \ddot x(\hat s)+L\left(-\ddot x(\hat s)+\dot t(\hat s)^2\right)=0$, 
     that is 
     \begin{equation}\label{auxprooftwographs}
         (1-L)\ddot x(\hat s)=-L\dot t(\hat s)^2. 
     \end{equation}
     By \Cref{caralugualeazeroprop}, the right hand side of \eqref{auxprooftwographs} is not zero. Therefore $L\neq 1$, and moreover
     \begin{equation*}
         \ddot x(\hat s)=-\left(\frac{L}{1-L}\right)\dot t(\hat s)^2.
     \end{equation*}
     Since $\ddot x(\hat s)\leq 0$, then $\frac{L}{1-L}\geq 0$. Recalling that $L\neq 0$ and $L\neq 1$, we conclude that $L\in(0,1)$. Assume that $S$ is mean convex. We prove that it is the union of two vertical graphs. Without loss of generality, assume that $\gamma$ is parametrized by arc-length, so that $\kappa=\dot x\ddot t-\ddot x\dot t$ is the curvature of $\gamma$. Notice that a rotationally invariant, closed surface is homeomorphic either to a sphere or to a torus.\\
     \textbf{Step 2.} Assume first that $S$ is homeomorphic to a sphere. We claim that $\hat s$ is the unique point in $I$ such that $\dot x(\hat s)=0$. Assume not by contradiction. Let $s_1\neq\hat s$ be such that $\dot x(s_1)=0$. Then there exists $\tilde s\in I$ such that $\dot x(\tilde s)=0$ and $\kappa(\tilde s)\leq 0$. Indeed, if $\kappa(s_1)\leq 0$, just choose $\tilde s=s_1$. Otherwise, there exists $s_2\in I$ satisfying the desired properties, and in this case we set $\tilde s=s_2$ 
     %\footnote{io metterei un disegnino, in cui si distingue il caso in cui $\dot t(s_1)>0$ e $\dot t(s_1)<0$}. 
     Since $\dot x(\tilde s)=0$, then $\dot t(\tilde s)\neq 0$. We claim that $\dot t(\tilde s)>0$. If not, then 
     \begin{equation*}
         H^\hhh(\tilde s)=\dfrac{x^3(\tilde s)\kappa(\tilde s)+\dot{t}^3(\tilde s)}{x(\tilde s)|\dot{t}(\tilde s)|^{3}}<0,
     \end{equation*}
   a contradiction with the fact that $S$ is mean convex. In particular, $
       0\geq \kappa(\tilde s)=-\ddot x(\tilde s)\dot t(\tilde s)$, 
   whence $\ddot x(\tilde s)\geq 0$. Evaluating \eqref{EUlerLagrange} at $\tilde s$, and since $\dot x(\tilde s)=0$ and $\dot t(\tilde s)>0$, we get
   \begin{equation*}
       x(\tilde s)^2\ddot x(\tilde s)\dot t(\tilde s)^2+L\dot t(\tilde s)\left(-x(\tilde s)^3\ddot x(\tilde s)\dot t(\tilde s)+\dot t(\tilde s)^3\right)=0,
   \end{equation*}
   whence
   \begin{equation}\label{contradictionsphere}
      x(\tilde s)^2 \ddot x(\tilde s)\left(1-L x(\tilde s)\right)=-L\dot t(\tilde s)^2.
   \end{equation}
By definition of $\hat s$, then $x(\tilde s)\leq 1$. Since $L\in(0,1)$, then $1-Lx(\tilde s)>0$. Therefore, \eqref{contradictionsphere} implies that $\ddot x(\tilde s)<0$, a contradiction. Therefore, the profile of $S$ is the union of two vertical graphs over the interval $(0,\hat s)$.\\
\textbf{Step 3.} Finally, assume that $S$ is homeomorphic to a torus. Let $\check s\in I$ be such that $x(\check s)=\min\{x(s)\,:\,s\in \overline{I}\}$. 
We claim that $\check s$ and $\hat s$ are the unique points where $\dot x=0$. If not, one can argue as above to infer the existence of $\tilde s\in I$ such that $\dot x(\tilde s)=0$ and $\kappa(\tilde s)\leq 0$. Then, since $S$ is mean convex, we deduce as above that $\dot t(\tilde s)>0$. In particular, $\ddot x(\tilde s)\geq 0$. Arguing \emph{verbatim} as above, we conclude that $\ddot x(\tilde s)<0$, reaching a contradiction. Again, the profile of $S$ is the union of two vertical graphs over the interval $(\check s,\hat s)$. The thesis follows.  
 \end{proof}
 By \Cref{punticriticisonograficiproposizione}, mean convex solutions to \eqref{EUlerLagrange} are the union of an upper vertical graph and a lower vertical graph. Denote respectively by $\gamma^+=(s,t^+(s))$ and $\gamma^-=(s,t^-(s))$ their profile. Notice that $\gamma^+$ is parametrized clockwise, while $\gamma^-$ is parametrized counterclockwise. When $S$ is homeomorphic to a sphere, $\gamma^+,\gamma^-:(0,1)\to\rr^2$. Moreover, since $S$ is at least of class $C^1$, then
 \begin{equation*}
     \lim_{s\to1^-}\dot t^-(s)=+\infty,\qquad   \lim_{s\to 1^-}\dot t^+(s)=-\infty.
 \end{equation*}
 Instead, when $S$ is homeomorphic to a torus, then $\gamma^+,\gamma^-:(\check s,1)\to\rr^2$ for some $\check s\in(0,1)$, and moreover
  \begin{equation*}
     \lim_{s\to1^-}\dot t^-(s)=+\infty,\qquad \lim_{s\to \check s^+}\dot t^-(s)=-\infty,\qquad  \lim_{s\to 1^-}\dot t^+(s)=-\infty,\qquad  \lim_{s\to \check s^+}\dot t^+(s)=+\infty.
 \end{equation*}
 Notice that, when a profile $\gamma$ admits a graphical parametrization $\gamma(s)=(s,t(s))$, \eqref{EUlerLagrange} simplifies as 
\begin{equation}\label{EUlerLagrangetgraf}
            s\dot t(s)^2-s^2\dot t(s)\ddot t(s)+L \sqrt{\Dot t(s)^2+s^2}\left(s^3\ddot{t}(s)+\dot{t}(s)^3\right)=0.
        \end{equation}
        In particular, setting $w(s)=\dot t(s)$, \eqref{EUlerLagrangetgraf} reads as \begin{equation}\label{EUlerLagrangetgrafder}
            s w(s)^2-s^2w(s)\dot w(s)+L \sqrt{w(s)^2+s^2}\left(s^3\dot{w}(s)+w(s)^3\right)=0.
        \end{equation}
Our final step consists in characterizing all possible solutions to \eqref{EUlerLagrangetgrafder} under the above-mentioned boundary verticality conditions. We focus on $\gamma_-$, being the characterization of $\gamma_+$ completely analogous. First, we consider the case in which $S$ is homeomorphic to a sphere.

        \begin{lemma}\label{lemmacaratterizzaionegraficacp}
          Let $L\in(0,1)$. Then the system  
\begin{equation}\label{sistemacauchy}
\left\{
\begin{aligned}
& s\,w(s)^2 - s^2 w(s)\,\dot w(s)
+ L\sqrt{w(s)^2+s^2}\,\left(s^3 \dot w(s)+w(s)^3\right)=0, \\[4pt]
& \lim_{s\to 1^-} w(s)=+\infty
\end{aligned}
\right.
\end{equation}
is solvable in $C^1(0,1)$ if and only if $L\in\left(0,\frac{1}{2}\right]$. The solution is unique, and it is given explicitly by           \begin{equation*}
    w^-_L(s) = \frac{s}{\sqrt{2L}}\sqrt{
\frac{L s^2-2L+1+s \sqrt{L^2 s^2 -2L+1  }}{1-s^2 }
}.
\end{equation*}
        \end{lemma}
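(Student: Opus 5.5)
The plan is to use a substitution that reduces \eqref{sistemacauchy} to a linear first-order equation with an explicit first integral. The left-hand side behaves well under $w=s\,u$. Indeed, $s w^2=s^3u^2$, $s^2w\dot w=s^3u(u+s\dot u)$, $\sqrt{w^2+s^2}=s\sqrt{1+u^2}$ and $s^3\dot w+w^3=s^3\big(u(1+u^2)+s\dot u\big)$. After dividing by $s^4$, the equation becomes
\begin{equation*}
\dot u\left(L s\sqrt{1+u^2}-u\right)=-L\,u\,(1+u^2)^{\frac{3}{2}} .
\end{equation*}
Since $\gamma^-$ is counterclockwise, $w>0$ on $(0,1)$, so $u>0$. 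I then write $u=\tan\phi$ with $\phi\in(0,\frac{\pi}{2})$; the boundary condition $w\to+\infty$ becomes $\phi\to\frac{\pi}{2}^-$ as $s\to1^-$. Dividing by $\sec^2\phi$ and multiplying by $\cos\phi$ gives $\dot\phi\,(Ls-\sin\phi)=-L\tan\phi$. Since the right-hand side never vanishes, $\dot\phi\neq0$, so $\phi$ is strictly monotone and I can invert to $s=s(\phi)$. This gives the linear equation
\begin{equation*}
\frac{ds}{d\phi}=-s\cot\phi+\frac{\cos\phi}{L},\qquad\text{i.e.}\qquad \frac{d}{d\phi}\left(s\sin\phi\right)=\frac{\sin\phi\cos\phi}{L}.
\end{equation*}
Hence every $C^1$ solution satisfies the first integral $s\sin\phi=\frac{\sin^2\phi}{2L}+C$. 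The boundary condition ($\phi\to\frac{\pi}{2}$ as $s\to1$) forces $C=1-\frac{1}{2L}$.

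With $q=\sin\phi\in(0,1)$, the first integral reads $q^2-2Lsq+(2L-1)=0$, so $q=Ls\pm\sqrt{L^2s^2-2L+1}$. Existence on all of $(0,1)$ requires the discriminant to be nonnegative for every $s\in(0,1)$. Letting $s\to0^+$ gives $1-2L\geq0$. So for $L\in(\frac{1}{2},1)$ there is no real $q$ near $s=0$, hence no solution on $(0,1)$; this proves the necessity of $L\leq\frac{1}{2}$. For $L\leq\frac{1}{2}$, the condition $q(1)=1$ reads $L\pm(1-L)=1$, which selects the plus sign since $L<1$. Continuity of $q$ and the fact that the minus branch never reaches $1$ then exclude the other root on the whole interval, which gives uniqueness.

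Finally, I undo the substitutions via $u^2=\frac{q^2}{1-q^2}$ and $w=su$. Using $q^2=2Lsq-(2L-1)$, I expect
\begin{equation*}
1-q^2=2L(1-sq),
\end{equation*}
and then a direct simplification should yield
\begin{equation*}
\frac{q^2}{1-q^2}=\frac{Ls^2-2L+1+s\sqrt{L^2s^2-2L+1}}{2L(1-s^2)},
\end{equation*}
which is the stated formula for $w^-_L$. I will then verify directly that this $w^-_L$ is $C^1$ on $(0,1)$, is positive, and blows up as $s\to1^-$; this gives sufficiency for $L\in(0,\frac{1}{2}]$.

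I expect the main obstacle to be justifying the change of variables rigorously. This means checking that $\phi$ is strictly monotone, so that $s(\phi)$ is well defined and the hodograph step loses no solutions, and handling the degenerate locus $Ls=\sin\phi$. I plan to argue that on this locus the equation would force $\tan\phi=0$, which is impossible since $u>0$. I also need to confirm that $q\in(0,1)$ throughout $(0,1)$ for the chosen root, so that $\phi$ stays in $(0,\frac{\pi}{2})$. For $L=\frac{1}{2}$, the discriminant at $s=0$ is zero and $q\to0$ there; this is the borderline case and must be checked separately.
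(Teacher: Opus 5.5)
\textbf{Gap: the sign of $w$.} Your computations are correct, and in substance this is the paper's argument. Your $q=\sin\phi=u/\sqrt{1+u^2}$ is exactly the paper's $z=w/\sqrt{w^2+s^2}$, and your first integral $q^2-2Lsq+2L-1=0$ is the paper's relation; the discriminant and branch-selection reasoning then match. The one genuine gap is that you assume $w>0$ on all of $(0,1)$, and your hodograph step depends on it. You need $\tan\phi\neq0$ to get $\dot\phi\neq0$ and invert $s=s(\phi)$; at a zero of $w$ the equation forces $\dot\phi=0$, so the inversion breaks down there. The lemma makes no sign assumption: uniqueness, and nonexistence for $L\in(\frac12,1)$, are claimed among all $C^1(0,1)$ solutions. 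The justification ``$\gamma^-$ is counterclockwise'' does not supply positivity either, since orientation fixes the direction in which the lower graph is traversed, not the sign of its slope $\dot t^-$. As written, you prove uniqueness and nonexistence only among positive solutions.

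\textbf{How to repair it.} The fix is easy and makes the inversion unnecessary. Multiply your equation by $\cos\phi$ to get $\dot\phi\cos\phi\,(Ls-\sin\phi)=-L\sin\phi$. Then directly
\[
\frac{d}{ds}\left(\sin^2\phi-2Ls\sin\phi\right)=-2\cos\phi\,\dot\phi\,(Ls-\sin\phi)-2L\sin\phi=0
\]
on all of $(0,1)$, with $\phi=\arctan(w/s)\in(-\frac{\pi}{2},\frac{\pi}{2})$ and no sign condition. This is precisely the paper's identity $-\frac12(w^2+s^2)^2\frac{d}{ds}(z^2-2Lsz)=0$. The argument then closes as follows:
\begin{itemize}
\item the relation holds on the whole interval, so the discriminant argument excludes $L>\frac12$;
\item for $L\le\frac12$ the roots $Ls\pm\sqrt{L^2s^2-2L+1}$ are distinct on $(0,1)$, so continuity and connectedness pin $q$ to one of them;
\item the limit $q\to1$ selects the plus sign, and $w>0$ follows as a consequence rather than a hypothesis.
\end{itemize}
Alternatively, keep the hodograph but run it on the maximal interval $(s_0,1)$ where $w>0$, and show $s_0=0$. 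If $s_0>0$, then $q(s_0)=0$, and letting $s\to s_0^+$ in the relation forces $2L-1=0$. For $L=\frac12$ the relation with $q>0$ gives $q=s$ on $(s_0,1)$, hence $q(s_0)=s_0>0$, a contradiction.
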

\begin{proof}
    When $L\in\left(0,\frac{1}{2}\right]$, simple computation shows that $w^-_L$ is well-defined and solves \eqref{sistemacauchy}. Conversely, fix $L\in(0,1)$ and let $w$ be a solution to \eqref{sistemacauchy}. Set 
    \begin{equation*}
        z(s)=\frac{w(s)}{\sqrt{w(s)^2 +s^2}},\qquad s\in(0,1).
    \end{equation*}
    Then $z\in C^1(0,1)$, and  
    \begin{equation*}
        \dot z(s)=\dfrac{1}{w(s)^2 +s^2}\left(\dot w(s)\sqrt{w(s)^2 +s^2}-w(s)\left(\dfrac{w(s)\dot w(s)+s}{\sqrt{w(s)^2 +s^2}}\right)\right)=s\left(\dfrac{\dot w(s)s-w(s)}{\left(w(s)^2 +s^2\right)^\frac{3}{2}}\right).
    \end{equation*}
    In particular,
    \begin{equation}\label{auxinprufouniejfns}
        w(s)=z(s)\sqrt{w(s)^2 +s^2},\qquad \dot w(s)s-w(s)=\frac{\dot z(s)\left(w(s)^2 +s^2\right)^\frac{3}{2}}{s}.
    \end{equation}
    Therefore,
    \begin{equation*}
        \begin{split}
            0\overset{\eqref{sistemacauchy}}&{=} sw(s)^2 - s^2 w(s)\dot w(s)
+ L\sqrt{w(s)^2+s^2}\,\left(s^3 \dot w(s)+w(s)^3\right)\\
&= sw(s)\left(w(s)- s\dot w(s)\right)
+ L\sqrt{w(s)^2+s^2}\,\left(s^2\left(s \dot w(s)-w(s)\right)+w(s)\left(w(s)^2 +s^2\right)\right)\\
\overset{\eqref{auxinprufouniejfns}}&{=}-z(s)\dot z(s)\left( w(s)^2 +s^2\right)^2+L\sqrt{w(s)^2 +s^2}\left(s\dot z(s)\left( w(s)^2 +s^2\right)^\frac{3}{2}+z(s)\left( w(s)^2 +s^2\right)^\frac{3}{2}\right)\\
&=\left( w(s)^2 +s^2\right)^2\Big(-z(s)\dot z(s)+L\left(s\dot z(s)+z(s)\right)\Big)\\
&=-\frac{1}{2}\left( w(s)^2 +s^2\right)^2\frac{d}{ds}\Big(z(s)^2 -2L sz(s)\Big).
        \end{split}
    \end{equation*}
    Since $\left( w(s)^2 +s^2\right)^2\neq 0$ on $(0,1)$, then there exists $a\in\rr$ such that 
    \begin{equation}\label{parabolainproof}
        z(s)^2 -2Ls z(s)+a=0,\qquad s\in(0,1).
    \end{equation}
    By the verticality condition of \eqref{sistemacauchy}, we deduce that $\lim_{s\to 1^-}z(s)=1$. Combining this information with \eqref{parabolainproof}, we deduce that $a=2L-1$, whence
    \begin{equation}\label{parabolainproofdue}
        z(s)^2 -2Ls z(s)+2L-1=0,\qquad s\in(0,1).
    \end{equation}
    Notice that \eqref{parabolainproofdue} is a quadratic equation in $z(s)$, whose discriminant is given by $4L^2 s^2-4(2L-1)$. By \eqref{parabolainproofdue}, then, $4L^2 s^2-4(2L-1)\geq 0$ for any $s\in(0,1)$, whence we deduce that $L\leq\frac{1}{2}$. We conclude that in this case $w=w_L$. Indeed, since $L\leq\frac{1}{2}$, the discriminant $4L^2 s^2-4(2L-1)$ is positive for any $s\in(0,1)$, whence
    \begin{equation*}
        z(s)\equiv Ls+\sqrt{L^2s^2-2L+1}\qquad\text{or}\qquad z(s)\equiv Ls-\sqrt{L^2s^2-2L+1},\qquad s\in(0,1).
    \end{equation*}
    The second possibility can be discarded, because in that case $\lim_{s\to 1^-}z(s)=2L-1\neq 1$. We conclude that
    \begin{equation*}
        z(s)=Ls+\sqrt{L^2s^2-2L+1},\qquad s\in(0,1).
    \end{equation*}
    Since $0<z(s)<1$ for any $s\in(0,1)$, then $w(s)>0$ for any $s\in(0,1)$, and, by simple computations,
    \begin{equation*}
        w(s)=\frac{s z(s)}{\sqrt{1-z(s)^2 }}=w^-_L(s).
    \end{equation*}
\end{proof}
Finally, we show that $S$ cannot be homeomorphic to a torus.
   \begin{lemma}
          Let $L\in(0,1)$ and  $\check s\in(0,1)$. The system  
\begin{equation}\label{sistemacauchytorus}
\left\{
\begin{aligned}
& s\,w(s)^2 - s^2 w(s)\,\dot w(s)
+ L\sqrt{w(s)^2+s^2}\,\left(s^3 \dot w(s)+w(s)^3\right)=0, \\[4pt]
& \lim_{s\to \check s^+} w(s)=-\infty
 \\[4pt]
& \lim_{s\to 1^-} w(s)=+\infty
\end{aligned}
\right.
\end{equation}
is not solvable in $C^1(\check s,1)$.
        \end{lemma}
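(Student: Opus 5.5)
We argue by contradiction, reusing verbatim the substitution from the proof of \Cref{lemmacaratterizzaionegraficacp}. Suppose $w\in C^1(\check s,1)$ solves \eqref{sistemacauchytorus}, and set
\begin{equation*}
z(s)=\frac{w(s)}{\sqrt{w(s)^2+s^2}},\qquad s\in(\check s,1).
\end{equation*}
Since $s\geq\check s>0$ on the interval, $z$ is well defined and of class $C^1$. Exactly the same algebra as in that proof gives
\begin{equation*}
0=-\tfrac12\left(w^2+s^2\right)^2\frac{d}{ds}\left(z^2-2Lsz\right).
\end{equation*}
Hence there is $a\in\rr$ with
\begin{equation*}
z(s)^2-2Lsz(s)+a=0\qquad\text{for every } s\in(\check s,1).
\end{equation*}

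Next we read off the two boundary conditions in terms of $z$. As $w\to+\infty$ at $s\to1^-$ while $s$ stays bounded, $z\to1$. As $w\to-\infty$ at $s\to\check s^+$, with $s\to\check s>0$, $z\to-1$. Passing to the limit in the quadratic relation at each endpoint gives two equations:
\begin{equation*}
1-2L+a=0\quad (s\to1^-),\qquad 1+2L\check s+a=0\quad (s\to\check s^+).
\end{equation*}
Subtracting them yields $2L(1+\check s)=0$. Since $L>0$ and $\check s>0$, this is impossible, which gives the contradiction and proves the lemma.

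The only point needing care is justifying that the first integral holds on the whole interval and that the limits may be inserted into it. Both are immediate, because $(w^2+s^2)^2>0$ and $z$ extends continuously to $[\check s,1]$ with values $-1$ and $1$ at the endpoints. No delicate step is expected, since the argument is the sphere case of \Cref{lemmacaratterizzaionegraficacp} with a second boundary condition added, and it does not even use the restriction $L<1$.
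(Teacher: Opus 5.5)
Your proposal is correct and follows the paper's own proof. Both use the substitution $z=w/\sqrt{w^2+s^2}$ to get the first integral $z^2-2Lsz+a=0$, then insert the endpoint limits $z\to 1$ as $s\to 1^-$ and $z\to -1$ as $s\to\check s^+$ to reach $2L(1+\check s)=0$, a contradiction. The only cosmetic difference is that the paper fixes $a=2L-1$ first, where you keep $a$ general and subtract the two endpoint equations.
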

        \begin{proof}
           Assume by contradiction that $w$ solves \eqref{sistemacauchytorus} for some $L,\check s\in(0,1)$. Let $z$ be as in the proof of \Cref{lemmacaratterizzaionegraficacp}. Arguing \emph{verbatim} as above, the equation and the verticality condition at $1$ imply   \begin{equation}\label{auxunoesxluditoro}
        z(s)^2 -2Ls z(s)+2L-1=0,\qquad s\in(\check s,1).
    \end{equation}
    Moreover, the verticality condition at $\check s$ and the definition of $z$ imply
    \begin{equation}\label{auxdueescluditoro}
        \lim_{s\to\check s^+}z(s)=-1.
    \end{equation}
    By \eqref{auxunoesxluditoro} and \eqref{auxdueescluditoro} it follows that $ 2L(\check s+1)=0,$
    whence either $L=0$ or $\check s=-1$, a contradiction.
        \end{proof}
        In the same way, the unique possible profiles of vertical upper graphs are of the form $\gamma^+_L(s)=(s,-t^-_L(s))$ for $L\in\left(0,\frac{1}{2}\right]$. Therefore, $S$ is the union of two vertical graphs with profiles $\gamma^+_L$ and $\gamma^-_{L}$ for some $L\in\left(0,\frac{1}{2}\right]$.  We then proved the following rigidity statement. 
       \begin{proposition}\label{rigiditymapergraicidmrvojfrn4fijerneitvjn}
           Let $S$ be rotationally invariant, closed, mean convex. The following are equivalent:
           \begin{itemize}
               \item [(i)] $S$ is an area-preserving critical point for $\tmc^\hhh$ along non-characteristic variations;
               \item [(ii)] $S$ is the union of two vertical graphs with profiles $\gamma^+_L$ and $\gamma^-_L$ for some $L\in\left(0,\frac{1}{2}\right]$.
           \end{itemize}          
       \end{proposition}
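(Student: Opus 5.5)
The plan is to assemble the results of this section, together with \Cref{characterizationcriticalpointsproposition}, which reduces stationarity to the Euler-Lagrange equation \eqref{derivationeulerlagrangeminkgeneral}. On a rotationally invariant surface, with $\lagrange=4L$, that equation is exactly \eqref{EUlerLagrange}. By \Cref{characterizationcriticalpointsproposition}, (i) is equivalent to the existence of some $L\in\rr$ for which the profile of $S$ solves \eqref{EUlerLagrange} on $S\setminus S_0$. Here one uses that $S_0$ lies on the vertical axis, so the local coordinates $(s,\theta)$ cover $S\setminus S_0$ up to a meridian, and \eqref{EUlerLagrange} holds there by continuity.

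For (i)$\implies$(ii), I would first normalize by a dilation so that $\max x=1$; I assume the dilations are taken implicitly, as in \Cref{punticriticisonograficiproposizione}. That proposition then gives $L\in(0,1)$ and shows that a mean convex solution is the union of two vertical graphs, of sphere type or torus type. For the lower graph, rewriting \eqref{EUlerLagrange} in the graphical form \eqref{EUlerLagrangetgrafder} with $w=\dot t^-$ yields the system \eqref{sistemacauchy} in the sphere case and \eqref{sistemacauchytorus} in the torus case, because the verticality conditions come from the $C^1$ regularity of $S$. The torus case is excluded by the non-solvability lemma. \Cref{lemmacaratterizzaionegraficacp} then forces $L\in(0,\frac12]$ and $\dot t^-=w^-_L$, so $t^-$ is determined up to an additive constant, i.e. up to a vertical translation. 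The upper graph is handled symmetrically: replacing $t$ by $-t$ reverses the orientation, and the same argument gives $\dot t^+=-w^-_L$.

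The main obstacle is a compatibility issue. A priori the two graphs carry the same $L$, since $\lagrange$ is global and unique by \Cref{characterizationcriticalpointsproposition}, but their additive constants could differ. I would fix the constants by requiring that the graphs meet at $s=1$ (closedness, with $t^-_L$ finite there; I would check integrability of $w^-_L\sim (1-s)^{-1/2}$) and at $s=0$. Given this, the profile is exactly $\gamma^-_L\cup\gamma^+_L$ after a vertical translation.

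For (ii)$\implies$(i), I would check directly that $w^-_L$ solves \eqref{EUlerLagrangetgrafder}, which is asserted in \Cref{lemmacaratterizzaionegraficacp}, and likewise for the reflected upper graph. Hence \eqref{EUlerLagrange} holds away from the meeting circle $s=1$. To extend it across the points where the profile is vertical, I would note that \eqref{EUlerLagrange} is invariant under reparametrization, so it holds in any regular parametrization, and then argue by continuity. This gives \eqref{derivationeulerlagrangeminkgeneral} on all of $S\setminus S_0$, and \Cref{characterizationcriticalpointsproposition} yields (i).

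Finally, I would remark that the closed surface produced this way is of class $C^2$ and mean convex, which is needed for it to fit the standing assumptions. This can be checked via the explicit parametrization \eqref{pansuminkparamtheorem}.
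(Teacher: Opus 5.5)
Your proposal is correct and takes essentially the same route as the paper. You reduce to \eqref{EUlerLagrange} via \Cref{characterizationcriticalpointsproposition}, use \Cref{punticriticisonograficiproposizione} for the two-graph structure, apply \Cref{lemmacaratterizzaionegraficacp} and the torus non-solvability lemma to the lower graph, and handle the upper graph through the sign-reflected equation. Your extra care with the additive constants and with extending the equation across the vertical circle fills in details the paper leaves implicit. One small slip: the graphs should be matched only at $s=1$. At $s=0$ they reach the axis at the two distinct characteristic points, so no condition is needed there, and requiring them to meet would be impossible. The matching at $s=1$ already fixes the profile up to a single vertical translation.
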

       For $L\in\left(0,\frac{1}{2}\right]$, denote the corresponding critical point by $S_L$. We first show that it is possible to describe them by means of a global parameterization. 
      % To this aim, fix  $L\in\left(0,\frac{1}{2}\right].$ 
      Let $\gamma_L=(x_L,t_L):[-\arccos\sqrt{1-2L},\arccos\sqrt{1-2L}]\to\rr^2$ be the regular parametrization given by 
\begin{equation*}
\begin{cases}
x_L(s)=\dfrac{1}{2L}\left(\cos s-\dfrac{1-2L}{\cos s}\right), \\[1em]
t_L(s)=\dfrac{1}{4L^2}\left(\dfrac{s}{2}+\dfrac{\sin 2s}{4}-(1-2L)^2\tan s\right).
\end{cases}
\end{equation*}
We show that $\gamma_L$ is indeed a parametrization of $S_L$.
We write $x_L=x$ and $t_L=t$, and we set $k=1-2L$. 
Set 
\begin{equation}\label{cosefinalidefofhfoga}
    h(s)=\frac{1}{2L}\left(\cos s+\frac{k}{\cos s}\right).
\end{equation}
Notice that both $x$ and $h$ are positive on $(-\arccos \sqrt{k},\arccos \sqrt{k})$. Moreover, 
\begin{equation*}
    \dot h(s)=\frac{1}{2L}\left(-\sin s+\frac{k\sin s}{\cos^2 s}\right)=-\tan s\, x(s).
\end{equation*}
Therefore
\begin{equation*}\label{xdottdoteastereggs}
\begin{cases}
\dot x(s)=\dfrac{1}{2L}\left(-\sin s-\dfrac{k\sin s}{\cos^2 s}\right)=-\tan s \,h(s), \\[1em]
\dot t(s)=\dfrac{1}{4L^2}\left(\cos^2 s-\dfrac{k^2}{\cos^2 s}\right)=x(s)\, h(s),
\end{cases}
\qquad 
\begin{cases}
\ddot x(s)=-\dfrac{h(s)}{\cos^2 s}+\tan^2 s\,x(s), \\[1em]
\ddot t(s)=-\tan s\,h(s)^2 -\tan s\, x(s)^2 .
\end{cases}
\end{equation*}
In particular
\begin{equation}\label{auxpredhsigma}
    x(s)^2 \dot x(s)^2 +\dot t(s)^2 =\tan^2 s\,x(s)^2 \,h(s)^2 +x(s)^2 \,h(s)^2 =\frac{x(s)^2 \,h(s)^2 }{\cos^2 s}
\end{equation}
and 
\begin{equation}\label{auxcurvaturaeuclideapansumink}
    \dot x(s)\ddot t(s)-\ddot x(s)\dot t(s)=\tan(s)^2 h(s)^3+\frac{x(s)h(s)^2}{\cos^2s}.
\end{equation}
A simple check reveals that $\gamma_L$ solves \eqref{EUlerLagrange}. By \Cref{rigiditymapergraicidmrvojfrn4fijerneitvjn}, we conclude that $\gamma_L$ is the profile of $S_L$.
\begin{remark}\label{pansuiscriticalpoint}
    Notice that 
    $$\gamma_\frac{1}{2}(s)=\left(\cos s,\frac{s}{2}+\frac{\sin 2s}{4}\right),\qquad s\in\left(-\frac{\pi}{2},\frac{\pi}{2}\right)$$
    is precisely the Pansu sphere as in \Cref{examplepansu}. In particular, \Cref{rigiditymapergraicidmrvojfrn4fijerneitvjn} shows that the Pansu sphere is an area-preserving critical point for $\tmc^\hhh$ under non-characteristic variations. However, \Cref{pansunotmin} demonstrates that the non-characteristic condition is essential: it constructs a variation of the Pansu sphere that fixes the characteristic points but is not non-characteristic, and for which the first variation does not vanish.
\end{remark}
\begin{remark}\label{remark_hkelforpansuinremarkfrefv}
    It is interesting to observe that the Pansu sphere is also a critical point for the volume-constrained Heintze-Karcher problem \eqref{optiprobdue}. Indeed, by \Cref{teoremavariazionigeneralimainsubriem} and \cite[Proposition 1.19]{MR4676392}, the Euler-Lagrange equation associated with the minimization of $\timc$ under volume constraint reads as 
    \begin{equation}\label{hkelforpansuinremark}
     \hat\Delta^{\hhh,S}\left(\frac{1}{\left(H^\hhh\right)^2}\right)+\frac{4\left(J(\vh)\alpha+\alpha^2\right)}{\left(H^\hhh\right)^2}+2
      =\lagrange,
    \end{equation}
    where $\lagrange\in\rr$ is the Lagrange multiplier arising from the volume constraint. Since the Pansu sphere has constant mean curvature and satisfies  \eqref{derivationeulerlagrangeminkgeneral}, then $J(\vh)\alpha+\alpha^2$ is constant: the Pansu sphere satisfies \eqref{hkelforpansuinremark}.
\end{remark}
Motivated by \Cref{pansuiscriticalpoint}, we refer to our critical points as \emph{Pansu-Minkowski spheres}. It is then natural to understand, among all Pansu-Minkowski spheres, which is a critical configuration for \eqref{minkquointro}, as well as which is the optimal shape for the same problem. To this end, notice that
\begin{equation}\label{desigmahforpmsphereserijenrviervjnc}
    \begin{split}
        d\sigma^\hhh        \overset{\eqref{auxpredhsigma}}&{=}\frac{x(s)^2 h(s)}{\cos s}\,ds\,d\theta
   %     &=\frac{1}{8L^3\cos s}\left(\cos^2-\frac{k^2}{\cos^2}\right)\left(\cos s-\frac{k}{\cos s}\right)\,ds\,d\theta\\
        =\frac{1}{8L^3}\left(\cos^2 s-k-\frac{k^2}{\cos^2s}+\frac{k^3}{\cos^4 s}\right)\,ds\,d\theta,
    \end{split}
\end{equation}
and moreover
\begin{equation}\label{hmeanperpansumink}
    \begin{split}
        H^\hhh\overset{\eqref{auxcurvaturaeuclideapansumink}}{=}\frac{\cos(s)^3  }{x(s)h(s)^3  }\left(\tan ^2s\,h(s)^3  +\frac{x(s)h(s)^2 }{\cos(s)^2 }+h(s)^3  \right)=\frac{x(s)+h(s)}{x(s)h(s)}\cos(s)=\frac{4L\cos^4 s}{\cos^4 s-k^2}
    \end{split}
\end{equation}
and 
\begin{equation}\label{hdesigmah}
    H^\hhh\,d\sigma^\hhh=x(s)\left(x(s)+h(s)\right)=\frac{1}{2L^2}\left(cos^2 s-k\right)\,ds\,d\theta.
\end{equation}
Therefore
\begin{equation}\label{calculoareaeasteregggggggg}
    \begin{split}
        \area^\hhh(S_L)&=\frac{\pi}{4L^3}\int_{-\arccos{\sqrt{k}}}^{\arccos\sqrt{k}}\left(\cos^2 s-k-\frac{k^2}{\cos^2s}+\frac{k^3}{\cos^4 s}\right)\,ds\\
        &=\frac{\pi}{4L^3}\int_{-\arccos{\sqrt{k}}}^{\arccos\sqrt{k}}\left(\frac{1+\cos 2s}{2}-k-k^2\frac{d}{ds}(\tan s)+k^3\frac{d}{ds}(\tan s)\left(1+\tan^2 s\right)\right)\,ds\\
        &=\frac{\pi}{2L^3}\left[\frac{s}{2}+\frac{\sin s\cos s}{2}-ks-k^2\tan s+k^3\tan s+\frac{k^3}{3}\tan^3 s\right]_0^{\arccos\sqrt{k}}\\
        &=\frac{\pi}{2L^3}\left(\frac{(1-2k)\arccos\sqrt{k}}{2}+\frac{\sqrt{k(1-k)}}{2}-k^2(1-k)\sqrt{\frac{1-k}{k}}+\frac{k^3}{3}\left(\frac{1-k}{k}\right)^\frac{3}{2}\right)\\
        &=\frac{\pi}{4L^3}\left(\left(4L-1\right)\arccos\sqrt{1-2L}+\left(1-\frac{8L(1-2L)}{3}\right)\sqrt{2L(1-2L)}\right).
    \end{split}
\end{equation}
Moreover,
\begin{equation}\label{calculotmceastereggggggg}        \tmc^\hhh(S_L)=\frac{\pi}{L^2}\int_{-\arccos{\sqrt{k}}}^{\arccos\sqrt{k}}\left(\cos^2 s-k\right)\,ds=\frac{\pi}{L^2}\left((4L-1)\arccos\sqrt{1-2L}+\sqrt{2L(1-2L)}\right)
\end{equation}
Therefore, recalling that $  \rifun_\mathrm{mink}^\hhh\coloneqq\left(\area^\hhh\right)^{-2/3}\tmc^\hhh$, we conclude that
\begin{equation*}
    \rifun^\hhh(S_L)=2(2\pi)^{\frac{1}{3}}\frac{(4L-1)\arccos\sqrt{1-2L}+\sqrt{2L(1-2L)}}{\left((4L-1)\arccos\sqrt{1-2L}+\frac{1}{3}(3-8L+16L^2)\sqrt{2L(1-2L)}\right)^\frac{2}{3}}.
\end{equation*}
The next result highlights the distinguished role played by the \emph{optimal Pansu-Minkowski sphere $S_\frac{1}{4}$.} 
\begin{proposition}\label{lemmaincuisievinceellunquarto}
  The following holds.
    \begin{equation*}
        \rifun_\mathrm{mink}^\hhh(S_L)\geq (18\pi)^\frac{1}{3},\qquad   \rifun_\mathrm{mink}^\hhh(S_L)= (18\pi)^\frac{1}{3}\text{ if and only if }L=\frac{1}{4}.
    \end{equation*}
    Moreover, 
    \begin{equation}\label{espressionerigiditalagrange}
        \lagrange=\frac{2\tmc^\hhh(S_L)}{3\sigma^\hhh(S_L)}\text{ if and only if }L=\frac{1}{4},
    \end{equation}
    and $S_\frac{1}{4}$ is the unique rotationally invariant critical point of $ \rifun_\mathrm{mink}^\hhh$ under non-characteristic variations.
\end{proposition}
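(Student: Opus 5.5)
The plan is to parametrize the family by the angle $\theta\coloneqq\arccos\sqrt{1-2L}\in\left(0,\frac{\pi}{2}\right]$. Then $\sin\theta=\sqrt{2L}$ and $\cos\theta=\sqrt{1-2L}$, so that
\[
4L-1=-\cos 2\theta,\qquad \sqrt{2L(1-2L)}=\tfrac12\sin 2\theta,\qquad 8L(1-2L)=\sin^2 2\theta,
\]
and $L=\frac14$ corresponds to $\theta=\frac{\pi}{4}$. Substituting into \eqref{calculoareaeasteregggggggg} and \eqref{calculotmceastereggggggg} gives
\[
\tmc^\hhh(S_L)=\frac{4\pi}{\sin^4\theta}F(\theta),\qquad \area^\hhh(S_L)=\frac{2\pi}{\sin^6\theta}G(\theta),
\]
where
\[
F=\tfrac12\sin2\theta-\theta\cos2\theta,\qquad G=-\theta\cos2\theta+\tfrac12\sin2\theta-\tfrac16\sin^32\theta.
\]
The powers of $\sin\theta$ cancel in the quotient, since $\tmc^\hhh$ scales like length and $\area^\hhh$ like length cubed. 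Hence
\[
\left(\rifun^\hhh_{\mathrm{mink}}(S_L)\right)^3=16\pi\,\frac{F^3}{G^2}.
\]
At $\theta=\frac{\pi}{4}$ we have $F=\frac12$ and $G=\frac13$, so $F^3/G^2=\frac98$ and the value is $(18\pi)^{1/3}$.

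For the minimization claim I would study $g=3\log F-2\log G$ on $\left(0,\frac{\pi}{2}\right]$. A direct differentiation gives
\[
F'=2\theta\sin2\theta,\qquad G'=2\theta\sin 2\theta-\cos2\theta\sin^22\theta,
\]
so the sign of $g'$ is the sign of $3F'G-2G'F$ (both $F$ and $G$ are positive, being multiples of $\tmc^\hhh$ and $\area^\hhh$). The aim is to show that $3F'G-2G'F$ is negative for $\theta<\frac{\pi}{4}$ and positive for $\theta>\frac{\pi}{4}$. At $\theta=\frac{\pi}{4}$ it vanishes, since $\cos 2\theta=0$ there. I expect this sign analysis to be the main obstacle. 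My first attempt will be to expand the expression, factor out $\cos2\theta$ (or $\theta-\frac{\pi}{4}$) where possible, and bound the remaining factor using elementary inequalities such as $\sin x<x$ and $\tan x>x$. Failing a clean factorization, I would split into the intervals $\left(0,\frac{\pi}{4}\right]$ and $\left[\frac{\pi}{4},\frac{\pi}{2}\right]$ and use monotonicity of the auxiliary terms on each interval.

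For \eqref{espressionerigiditalagrange}, recall $\lagrange=4L$ and use \eqref{calculoareaeasteregggggggg}, \eqref{calculotmceastereggggggg}. Write $A=(4L-1)\arccos\sqrt{1-2L}$ and $B=\sqrt{2L(1-2L)}$. The identity $4L=\frac{2\tmc^\hhh}{3\area^\hhh}$ becomes $3\left(A+(1-\tfrac83 L(1-2L))B\right)=2(A+B)$, which simplifies to $A=-(4L-1)^2B$. This factors as
\[
(4L-1)\left(\arccos\sqrt{1-2L}+(4L-1)B\right)=0.
\]
In the angle variable the bracket equals $\theta-\frac14\sin4\theta$, which is strictly positive for $\theta>0$ because $\sin x<x$. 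Hence the identity holds if and only if $L=\frac14$.

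Finally, I would show that being critical for $\rifun^\hhh_{\mathrm{mink}}$ is equivalent to $\lagrange=\frac{2\tmc^\hhh}{3\area^\hhh}$. For any non-characteristic variation $\Phi$,
\[
\delta\rifun^\hhh_{\mathrm{mink}}(S)[\Phi]=\area^\hhh(S)^{-2/3}\Big(\delta\tmc^\hhh(S)[\Phi]-\tfrac{2\tmc^\hhh(S)}{3\area^\hhh(S)}\,\delta\area^\hhh(S)[\Phi]\Big).
\]
If $S$ is critical for the quotient, then by \Cref{propimplicazionenonequaivmeovtrbvnrtvonrgvo} it is an area-preserving critical point. So it satisfies \Cref{characterizationcriticalpointsproposition}(ii), and the uniqueness of $\lagrange$ there forces $\lagrange=\frac{2\tmc^\hhh}{3\area^\hhh}$. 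Conversely, at $L=\frac14$ this value of $\lagrange$ works in \Cref{characterizationcriticalpointsproposition}(ii), so the bracket above vanishes for every non-characteristic $\Phi$. For uniqueness among rotationally invariant (closed, mean convex) surfaces, \Cref{rigiditymapergraicidmrvojfrn4fijerneitvjn} shows that a critical point of the quotient is $S_L$ up to dilations and vertical translations; the quotient and the criticality condition are invariant under these, so \eqref{espressionerigiditalagrange} gives $L=\frac14$.
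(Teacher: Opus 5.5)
Your route is the paper's. Your $\theta=\arccos\sqrt{1-2L}$ is half of the paper's $\ell$. Your formulas for $F$, $G$, $F'$, $G'$ are correct, as are the value $(18\pi)^{1/3}$ at $\theta=\frac{\pi}{4}$, the factorization $(4L-1)\bigl(\theta-\frac14\sin4\theta\bigr)$ behind \eqref{espressionerigiditalagrange}, and the criticality/uniqueness argument via uniqueness of $\lagrange$ and \Cref{rigiditymapergraicidmrvojfrn4fijerneitvjn}. The gap is the step you flag yourself: you never establish the sign of $3F'G-2G'F$. That sign is the entire content of $\rifun^\hhh_{\mathrm{mink}}(S_L)\geq(18\pi)^{1/3}$ with equality only at $L=\frac14$. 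As it stands, this part is a plan rather than a proof.

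Your first idea, factoring out $\cos2\theta$, does go through. Put $u=2\theta\in(0,\pi]$, so that $F=\frac12(\sin u-u\cos u)$ and $F'=u\sin u$. Using $1-\sin^2u=\cos^2u$ one finds
\[
3F'G-2G'F=-\tfrac12\sin u\cos u\,k(u),\qquad k(u)=u^2+u\sin u\cos u-2\sin^2u,
\]
and $2k$ is exactly the function $f$ the paper reduces to.

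Your proposed finish, termwise bounds such as $\sin x<x$ and $\tan x>x$, will not show $k>0$. Write $k=(u^2-\sin^2u)-\sin u\,(\sin u-u\cos u)$. Both brackets equal $\frac{u^4}{3}+O(u^6)$, so $k=\frac{2}{45}u^6+O(u^8)$, and any such first-order inequality is too crude.

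What works is convexity: $k(0)=k'(0)=0$ and $k''(u)=4\sin u\,(\sin u-u\cos u)>0$ on $(0,\pi)$. This is where $\tan u>u$ enters, on $(0,\frac{\pi}{2})$; on $[\frac{\pi}{2},\pi)$ one uses $\cos u\leq 0$. Hence $k>0$ on $(0,\pi]$.

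Since $\sin u>0$ on $(0,\pi)$, $3F'G-2G'F$ has the sign of $-\cos2\theta$. It is negative for $\theta<\frac{\pi}{4}$ and positive for $\frac{\pi}{4}<\theta<\frac{\pi}{2}$, which yields the strict minimum at $L=\frac14$. With this inserted, your proof coincides with the paper's.
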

\begin{proof}
    We show that $S_\frac{1}{4}$ is the unique minimum point of $\rifun^\hhh_\mathrm{mink}$ within Pansu-Minkowski spheres. Consider the re-parametrization 
    \begin{equation*}
        \ell=2\arccos\sqrt{1-2L}\in(0,\pi].
    \end{equation*}
    A simple computation shows that
    \begin{equation*}
        (4L-1)\arccos{\sqrt{1-2L}}=-\frac{1}{2}\ell\cos\ell,\qquad
        \sqrt{2L(1-2L)}=\frac{1}{2}\sin\ell,\qquad
        \frac{1}{3}(3-8L+16L^2)=1-\frac{1}{3}\sin^2\ell.
    \end{equation*}
    Therefore
    \begin{equation*}
        \begin{split}
             \rifun_\mathrm{mink}^\hhh\left(S_{L(\ell)}\right)=(2\pi)^\frac{1}{3}\frac{\sin\ell-\ell\cos\ell}{\left(\frac{1}{2}\left(1-\frac{1}{3}\sin^2\ell\right)\sin\ell-\frac{1}{2}\ell\cos\ell\right)^\frac{2}{3}}
         %   &=4(2\pi)^\frac{1}{3}\frac{\sin\ell-\ell\cos\ell}{\left(4\sin\ell-\frac{4}{3}\sin^3\ell-4\ell\cos\ell\right)^\frac{2}{3}}\\
            =4(2\pi)^\frac{1}{3}\frac{\sin\ell-\ell\cos\ell}{\left(3\sin\ell+\frac{1}{3}\sin 3\ell-4\ell\cos\ell\right)^\frac{2}{3}},
        \end{split}
    \end{equation*}
    where in the last equality we exploited the identity $-4\sin^3\ell=\sin 3\ell-3\sin\ell$. Notice that $L\left(\frac{\pi}{2}\right)=\frac{1}{4}$. To conclude, it suffices to check that 
    \begin{equation}\label{segnoderivata}
        \frac{d \rifun_\mathrm{mink}^\hhh\left(S_{L(\ell)}\right)}{d\ell}<0\text{ on }\left(0,\frac{\pi}{2}\right),\qquad \left.\frac{d \rifun_\mathrm{mink}^\hhh\left(S_{L(\ell)}\right)}{d\ell}\right|_{\ell=\frac{\pi}{2}}=0,\qquad \frac{d \rifun_\mathrm{mink}^\hhh\left(S_{L(\ell)}\right)}{d\ell}>0\text{ on }\left(\frac{\pi}{2},\pi\right).
    \end{equation}
    To this aim,
    \begin{equation*}
        \begin{split}
            &\frac{d \rifun_\mathrm{mink}^\hhh\left(S_{L(\ell)}\right)}{d\ell}=4(2\pi)^\frac{1}{3}\dfrac{\ell\sin\ell\left(3\sin\ell+\frac{1}{3}\sin 3\ell -4\ell\cos\ell\right)-\frac{2}{3}\left(\sin\ell-\ell\cos\ell\right)\left(\cos 3\ell-\cos\ell+4\ell\sin\ell\right)}{\left(3\sin\ell+\frac{1}{3}\sin 3\ell-4\ell\cos\ell\right)^\frac{5}{3}}\\
            &\quad=\frac{4(2\pi)^\frac{1}{3}}{3}\dfrac{\sin 2\ell\left(-\frac{1}{2}\ell\sin 2\ell-2\ell^2-\cos 2\ell+2\sin^2\ell+1\right)+\ell\left(2\sin^2\ell\left(\frac{1+\cos 2\ell}{2}\right)-4\cos^2\ell\left(\frac{1-\cos 2\ell}{2}\right)\right)}{\left(3\sin\ell+\frac{1}{3}\sin 3\ell-4\ell\cos\ell\right)^\frac{5}{3}}\\
            &\quad=-\frac{4(2\pi)^\frac{1}{3}}{3}\sin 2\ell\left(\dfrac{\ell\sin 2\ell+2\ell^2+\cos 2\ell-2\sin^2\ell-1}{\left(3\sin\ell+\frac{1}{3}\sin 3\ell-4\ell\cos\ell\right)^\frac{5}{3}}\right).
        \end{split}
    \end{equation*}
    Then, \eqref{segnoderivata} follows if  $f(\ell)=\ell\sin 2\ell+2\ell^2+\cos 2\ell-2\sin^2\ell-1$ is positive on $(0,\pi)$. Observe that
    \begin{equation*}
        \dot f(\ell)=2\ell\cos 2\ell+4\ell-3\sin2\ell.
    \end{equation*}
    In particular, $f(0)=\dot f(0)=0.$ Then $f>0$ on $(0,\pi)$ provided that $\ddot f>0$ on $(0,\pi)$. To this aim,
    \begin{equation*}
        \ddot f(\ell)=4\left(1-\cos 2\ell-\ell\sin2\ell\right)=8\sin\ell\left(\sin\ell-\ell\cos\ell\right).
    \end{equation*}
   Therefore $\ddot f$ is positive. Then \eqref{segnoderivata} follows. We prove \eqref{espressionerigiditalagrange}. Combining \eqref{calculoareaeasteregggggggg} and \eqref{calculotmceastereggggggg},
   \begin{equation*}
       \lagrange-\frac{2\tmc^\hhh(S_L)}{3\sigma^\hhh(S_L)}=4L(4L-1)\left(\frac{\arccos\sqrt{1-2L}+(4L-1)\sqrt{2L(1-2L)}}{3\left(4L-1\right)\arccos\sqrt{1-2L}+(3-8L+16L^2)\sqrt{2L(1-2L)}}\right).
   \end{equation*}
   Then \eqref{espressionerigiditalagrange} follows provided that $\arccos\sqrt{1-2L}+(4L-1)\sqrt{2L(1-2L)}\neq 0$ for any $L\neq\frac{1}{4}$. Changing variables as above, 
   \begin{equation*}
       \arccos\sqrt{1-2L}+(4L-1)\sqrt{2L(1-2L)}=\frac{1}{4}\left(2\ell-\sin 2\ell\right), \qquad\ell\in(0,\pi],
   \end{equation*}
   whence \eqref{espressionerigiditalagrange} easily follows. Finally, fix an arbitrary non-characteristic variation. Then, recalling \Cref{propimplicazionenonequaivmeovtrbvnrtvonrgvo},
   \begin{equation}\label{bohrevisionifinalifoga}
   \begin{split}
        \left.\frac{d}{dt}\right|_{t=0} \rifun_\mathrm{mink}^\hhh\left(\Phi_t(S_L)\right)&=\sigma^\hhh(S_L)^{-\frac{2}{3}}\left(-\frac{2\tmc^\hhh(S_L)}{3\sigma^\hhh(S_L)}\left(\delta\sigma^\hhh(S_L)[\Phi]\right)+\left(\delta\tmc^\hhh(S_L)[\Phi]\right)\right)\\
       \overset{\eqref{variazioneprimazerotmcstatement}}&{=}\sigma^\hhh(S_L)^{-\frac{2}{3}}\left(\delta\sigma^\hhh(S_L)[\Phi]\right)\left(-\frac{2\tmc^\hhh(S_L)}{3\sigma^\hhh(S_L)}+\lagrange\right).
   \end{split}
   \end{equation}
By \eqref{espressionerigiditalagrange}, $S_\frac{1}{4}$ is a critical point. We claim that it is the unique rotationally invariant critical point. If not, let $S$ be another rotationally invariant critical point. By \Cref{propimplicazionenonequaivmeovtrbvnrtvonrgvo} and \Cref{rigiditymapergraicidmrvojfrn4fijerneitvjn}, $S=S_L$ for some $L\in\left(0,\frac{1}{2}\right]$ and $L\neq\frac{1}{4}$. Let $\Phi$ be a non-characteristic variation for which $\delta\sigma^\hhh(S_L)[\Phi]\neq 0$. Then \eqref{bohrevisionifinalifoga} and \eqref{espressionerigiditalagrange} grant that 
\begin{equation*}
    \left.\frac{d}{dt}\right|_{t=0} \rifun_\mathrm{mink}^\hhh\left(\Phi_t(S)\right)= \left.\frac{d}{dt}\right|_{t=0} \rifun_\mathrm{mink}^\hhh\left(\Phi_t(S_L)\right)\neq 0,
\end{equation*}
from which uniqueness follows.
\end{proof}
\begin{proof}[Proof of \Cref{intro_teo2}]
   It follows by \Cref{rigiditymapergraicidmrvojfrn4fijerneitvjn} (and subsequent remarks) and \Cref{lemmaincuisievinceellunquarto}.
\end{proof}
%By \Cref{lemmaincuisievinceellunquarto}, $S_\frac{1}{4}$ plays a distinguished role in the class of Pansu-Minkowski spheres. We refer to it as \emph{optimal Pansu-Minkowski sphere}. The results of this section can be summarized in the following statement.
%\color{purple} Lo ho spostato in intro, toglilo da qui, ma commenta il fatto che la sua proof segue dai risultati precedenti.
%\begin{theorem}
  % For any   $L\in\left(0,\frac{1}{2}\right]$, define $\gamma_L=(x_L,t_L):[-\arccos\sqrt{1-2L},\arccos\sqrt{1-2L}]\to\rr^2$ by 
%\begin{equation*}
%\begin{cases}
%x_L(s)=\dfrac{1}{2L}\left(\cos s-\dfrac{1-2L}{\cos s}\right), \\[1em]
%t_L(s)=\dfrac{1}{4L^2}\left(\dfrac{s}{2}+\dfrac{\sin 2s}{4}-(1-2L)^2\tan s\right).
%\end{cases}
%\end{equation*}
  %  Denote by $S_L$ its associated rotationally invariant surface. We call them \emph{Pansu-Minkowski spheres}.
 %   Let $S\subseteq\hh^1$ be a rotationally invariant, closed, mean convex surface. The following are equivalent:
 %   \begin{itemize}
     %      \item [(i)] $S$ is an-area-preserving critical point along non-characteristic variations;
      %         \item [(ii)] $S$ is, up to dilations and vertical translation,  a Pansu-Minkowski sphere $S_L$ for some $\left(0,\frac{1}{2}\right]$.
  %  \end{itemize}
 %   In addition, the optimal Pansu-Minkowski sphere $S_{\frac{1}{4}}$ minimizes $\mathcal \rifun^\hhh $ in the class of Pansu-Minkowski spheres:
     %   \begin{equation*}
     %   \rifun ^\hhh(S_L)\geq (18\pi)^\frac{1}{3},\qquad  \rifun ^\hhh(S_L)= (18\pi)^\frac{1}{3}\text{ if and only if }L=\frac{1}{4}.
  %  \end{equation*}
%\end{theorem}
%\color{black}
\section{Total mean curvature: stability of Pansu-Minkowski spheres}\label{sec_stab}
Next, we discuss the stability of Pansu-Minkowski spheres. By \Cref{carstabprop}, we focus on the penalized functional $\penafun^\hhh_L$ as introduced in \eqref{intro_penalfun}.
%\begin{equation*}
  %  \penafun_L^\hhh(S)\coloneqq\tmc^\hhh(S)-4L\area^\hhh(S),\qquad L\in\left(0,\frac{1}{2}\right]
%\end{equation*}
Fix a non-characteristic variation $\Phi$. Let $\varphi$ and $\psi$ be as in \eqref{lefunzioniimportanti}. %With some abuse of notation, we may write $\varphi=\varphi(s,\theta)$ meaning the composition of $\varphi $ with the parametrization of $S_L$ provided by \eqref{pansuminkparamtheorem}.
First, we evaluate the second variation of $\penafun^\hhh_L$ at the Pansu-Minkowski sphere $S_L$. By \eqref{variazsecondaareastatementsubriem} and \eqref{variazsecondatmcsubriemmainstatement},
\begin{equation*}
    \begin{split}
        \delta^2\penafun^\hhh_L&(S_L)[\Phi]=\delta\tmc^\hhh(S_L)\left[\psi\right]-4L\delta\area^\hhh(S_L)\left[\psi\right]\\
        &\quad+4\int_{S_L}\Big(-\s\varphi\,J(\vh)\varphi+\varphi^2\left(2\s\alpha-H^\hhh\alpha^2\right)\Big)\,d\sigma^\hhh-4L\int_{S_L}\Big(\left(J(\vh)\varphi\right)^2-4\varphi^2\left(J(\vh)\alpha+\alpha^2\right)\Big)\,d\sigma^\hhh\\
        \overset{\eqref{variazioneprimazerotmcstatement},\eqref{derivationeulerlagrangeminkgeneral}}&{=}4\int_{S_L}\Big(\big(-\s\varphi\,J(\vh)\varphi-L\left(J(\vh)\varphi\right)^2\big)+\big(2\s\alpha-H^\hhh\left(\alpha^2+4L^2\right)\big)\varphi^2\Big)\,d\sigma^\hhh.
    \end{split}
\end{equation*}
We make the zero-order term explicit, and we provide a sharp lower bound. 
\begin{lemma}\label{zeroorderstabilitilemma}
    It holds that 
    \begin{equation*}
        2\s\alpha-H^\hhh\left(\alpha^2+4L^2\right)=\frac{2(1-2L)}{x(s)^3h(s)},
    \end{equation*}
   where $h$ is as in \eqref{cosefinalidefofhfoga}. In particular, 
    \begin{equation*}
        2\s\alpha-H^\hhh\left(\alpha^2+4L^2\right)\geq  \left.\Big[2\s\alpha-H^\hhh\left(\alpha^2+4L^2\right)\Big]\right|_{s=0}=\frac{2L(1-2L)}{1-L}.
    \end{equation*}
\end{lemma}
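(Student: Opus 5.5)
The plan is to express every quantity appearing in $2\s\alpha-H^\hhh(\alpha^2+4L^2)$ through the auxiliary functions $x=x_L$ and $h$ of \eqref{cosefinalidefofhfoga}. We will use the identities already recorded: $\dot x=-\tan s\,h$, $\dot t=xh$, $\dot h=-\tan s\,x$, $x^2\dot x^2+\dot t^2=x^2h^2/\cos^2 s$, and $H^\hhh=\cos s\,(x+h)/(xh)$ from \eqref{hmeanperpansumink}. We will also use the two elementary relations that follow directly from the definitions,
\begin{equation*}
x+h=\frac{\cos s}{L},\qquad h-x=\frac{k}{L\cos s},\qquad k=1-2L.
\end{equation*}

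\textbf{Step 1: $\alpha$ and $\s\alpha$.} By \eqref{alfasurotinvggggg}, $\alpha=-\dot x/\sqrt{\dot t^2+x^2\dot x^2}=\sin s/x$. Since $\alpha$ depends only on $s$, \eqref{sinloccords} gives
\begin{equation*}
\s\alpha=\frac{\dot t}{\dot t^2+x^2\dot x^2}\,\dot\alpha=\frac{\cos^2 s}{xh}\,\dot\alpha .
\end{equation*}
Here $\dot\alpha=(x\cos s+h\sin^2 s/\cos s)/x^2$, obtained using $\dot x=-\tan s\,h$. Hence
\begin{equation*}
x^3h\,\s\alpha=x\cos^3 s+h\sin^2 s\cos s.
\end{equation*}

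\textbf{Step 2: combine.} Multiplying the target expression by $x^3h$ and using $x+h=\cos s/L$ gives
\begin{equation*}
x^3h\Big(2\s\alpha-H^\hhh(\alpha^2+4L^2)\Big)=2x\cos^3 s+2h\sin^2 s\cos s-\frac{\cos^2 s\sin^2 s}{L}-4Lx^2\cos^2 s .
\end{equation*}
It then remains to check that the right-hand side equals $2k$. I would substitute $x=(\cos^2 s-k)/(2L\cos s)$ and $h=(\cos^2 s+k)/(2L\cos s)$ and simplify. The terms in $k$ and $\cos^2 s$ should cancel down to the constant $2k=2(1-2L)$; for instance, at $s=0$ one finds $2-4L=2k$, which confirms the identity there. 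This purely algebraic verification is the only laborious part; I expect no conceptual obstacle, just careful bookkeeping.

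\textbf{Step 3: the lower bound.} Since $k=1-2L\geq 0$, it suffices to show that $x^3h$ attains its maximum at $s=0$ on $(-\arccos\sqrt k,\arccos\sqrt k)$. Both $x$ and $h$ are positive there. Moreover $\dot x=-\tan s\,h$ and $\dot h=-\tan s\,x$, so both functions are increasing for $s<0$ and decreasing for $s>0$. Hence $x^3h$ is maximal at $s=0$, where $x(0)=(1-k)/(2L)=1$ and $h(0)=(1+k)/(2L)=(1-L)/L$. This gives the minimum value
\begin{equation*}
\frac{2k}{x(0)^3h(0)}=\frac{2L(1-2L)}{1-L},
\end{equation*}
as claimed. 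When $L=\frac12$ the expression vanishes identically and the bound is trivial.
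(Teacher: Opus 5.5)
Your proposal is correct and follows the paper's proof essentially step by step. It computes $\alpha=\sin s/x$ and $\s\alpha$ via \eqref{sinloccords}, substitutes $H^\hhh=\cos s\,(x+h)/(xh)$ to reduce to an algebraic identity in $\cos s$, and uses that $x$ and $h$ peak at $s=0$ for the bound. The algebra you deferred does close: writing $c=\cos s$, $L$ times your right-hand side equals $c^2(c^2-k)+(1-c^2)(c^2+k)-c^2(1-c^2)-(c^2-k)^2=k(1-k)=2Lk$, which gives exactly $2k$.
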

\begin{proof}
    Notice that 
    \begin{align*}
        \alpha\overset{\eqref{alfasurotinvggggg}}{=}-\frac{\dot x}{\sqrt{ \dot t^2+x^2\Dot x^2 }}=\frac{\sin s}{x},\qquad
        \s\alpha\overset{\eqref{sinloccords}}{=}\frac{\dot t\dot\alpha}{ \dot t^2+x^2\Dot x^2 }=\frac{\cos s}{x^2 h}\left(\cos^2 s+\frac{h}{x}\sin^2 s\right).
    \end{align*}
    Therefore, recalling \eqref{hmeanperpansumink}, 
    \begin{equation*}
    \begin{split}
           2\s\alpha-H^\hhh\left(\alpha^2+4L^2\right)&=\frac{2\cos s}{x^2 h}\left(\cos^2 s+\frac{h}{x}\sin^2 s\right)-\frac{2\cos^2 s}{2Lxh}\left(\frac{\sin^2s}{x^2}+4L^2\right)\\
           &=\frac{2\cos s}{x^3 h}\left(x\cos^2 s+h\sin^2 s-\frac{\sin^2 s\cos s}{2L}-2L x^2\cos s\right)\\
           &=\frac{2\cos s}{x^3 h}\left((x-h)\cos^2 s+h-\frac{\sin^2 s\cos s}{2L}-\frac{\cos^3}{2L}+\frac{2(1-2L)\cos s}{2L}-\frac{(1-2L)^2}{2L\cos s}\right)\\
           &=\frac{\cos s}{Lx^3 h}\left(\cos s+\frac{1-2L}{\cos s}-\sin^2 s\cos s-\cos^3-\frac{(1-2L)^2}{\cos s}\right)\\
           &=\frac{2(1-2L)}{x^3 h}.
    \end{split}
    \end{equation*}
    To conclude, recall that $x$ and $h$ achieve their maximum at $s=0$, and moreover $x(0)=1$ and  $h(0)=\frac{1-L}{L}$.
\end{proof}
Next, we deal with the first-order term.
\begin{lemma}\label{firstorderstabilitylemma}
    It holds that 
    \begin{equation}\label{primeinstabgenericocondedetheta}
    \begin{split}
           -\s\varphi\,J(\vh)\varphi&-L\left(J(\vh)\varphi\right)^2=\left(\frac{\cos^3 s}{x h^2}-\frac{L\cos^2 s}{ h^2}\right)\left(\frac{\partial\varphi}{\partial s}\right)^2\\
           &\quad+\left(\frac{2L\cos^2 s}{ x h}-\frac{\cos s\cos 2s}{x^2 h}\right)\frac{\partial \varphi}{\partial s}\frac{\partial \varphi}{\partial \theta}-\left(\frac{\sin^2 s\cos s}{x^3}+\frac{L\cos^2 s}{x^2}\right)\left(\frac{\partial\varphi}{\partial\theta}\right)^2.
    \end{split}    
    \end{equation}
    If in addition $\varphi$ is independent of $\theta$, then the following sharp lower bound holds: 
    \begin{equation*}
         -\s\varphi\,J(\vh)\varphi-L\left(J(\vh)\varphi\right)^2\geq(1-L)\left(J(\vh)\varphi\right)^2.
    \end{equation*}
\end{lemma}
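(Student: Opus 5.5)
Express both $J(\vh)$ and $\s$ in the local coordinates $(s,\theta)$ through \Cref{jvwithresptolocalchart}, then specialize to the explicit profile $\gamma_L$ of the Pansu-Minkowski sphere. For this profile we have $\dot x=-\tan s\,h$, $\dot t=xh$ and $\dot t^2+x^2\dot x^2=x^2h^2/\cos^2 s$ by \eqref{auxpredhsigma}. Substituting into \eqref{jvinlocalcoords} and \eqref{sinloccords} should give
\begin{equation*}
J(\vh)=-\frac{\cos s}{h}\frac{\partial}{\partial s}+\frac{\cos s}{x}\frac{\partial}{\partial\theta},\qquad
\s=\frac{\cos^2 s}{xh}\frac{\partial}{\partial s}+\frac{\sin^2 s}{x^2}\frac{\partial}{\partial\theta}.
\end{equation*}
Write $a=\partial_s\varphi$ and $b=\partial_\theta\varphi$. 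Then $J(\vh)\varphi=-\frac{\cos s}{h}a+\frac{\cos s}{x}b$ and $\s\varphi=\frac{\cos^2 s}{xh}a+\frac{\sin^2 s}{x^2}b$.

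Next, expand $-\s\varphi\,J(\vh)\varphi-L(J(\vh)\varphi)^2$ as a quadratic form in $(a,b)$.
\begin{itemize}
\item The $a^2$ coefficient is $\frac{\cos^3 s}{xh^2}-\frac{L\cos^2 s}{h^2}$.
\item The $b^2$ coefficient is $-\frac{\sin^2 s\cos s}{x^3}-\frac{L\cos^2 s}{x^2}$.
\item The mixed coefficient is $\frac{\sin^2 s\cos s}{x^2h}-\frac{\cos^3 s}{x^2h}+\frac{2L\cos^2 s}{xh}$. Using $\cos^2 s-\sin^2 s=\cos 2s$, this becomes $\frac{2L\cos^2 s}{xh}-\frac{\cos s\cos 2s}{x^2h}$.
\end{itemize}
These are exactly the coefficients in \eqref{primeinstabgenericocondedetheta}. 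This step is only bookkeeping; the one point to watch is the sign conventions inherited from \eqref{jvinlocalcoords}.

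For the lower bound, set $b=0$. Then $(J(\vh)\varphi)^2=\frac{\cos^2 s}{h^2}a^2$, so the left-hand side equals $\left(\frac{\cos s}{x}-L\right)(J(\vh)\varphi)^2$. The claim therefore reduces to the pointwise inequality $\frac{\cos s}{x}\geq 1$, which is $x\le\cos s$. Since $x=\frac{1}{2L}\left(\cos s-\frac{k}{\cos s}\right)$ with $k=1-2L$, this is equivalent to $\cos^2 s-k\le 2L\cos^2 s$, that is $(1-2L)\cos^2 s\le 1-2L$. That holds trivially, with equality at $s=0$ (and identically when $L=\frac12$). Sharpness of the bound then follows by taking $\varphi$ concentrated near $s=0$.

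The main obstacle is simplifying the coefficients in the $s$-chart, in particular deriving the clean coordinate forms of $J(\vh)$ and $\s$ above. Everything after that is algebra together with the elementary inequality $x\le\cos s$.
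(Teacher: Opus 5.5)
Your proposal is correct and matches the paper's proof: the same coordinate forms of $J(\vh)\varphi$ and $\s\varphi$ on $S_L$, the same expansion as a quadratic form in $(\partial_s\varphi,\partial_\theta\varphi)$, and the same pointwise inequality behind the lower bound. The only difference is in presentation. The paper tests $-\s\varphi\,J(\vh)\varphi-(L+\mu)(J(\vh)\varphi)^2\geq 0$ with a free parameter $\mu\geq 0$ and splits into cases on the sign of $L-\mu$ to find $\mu\leq 1-L$; you instead factor the coefficient as $\left(\frac{\cos s}{x}-L\right)$ and reduce directly to $x\leq\cos s$, i.e. $(1-2L)(1-\cos^2 s)\geq 0$, which is the case $\mu=1-L$ of the paper's computation.
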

\begin{proof}
    Notice that 
    \begin{align*}
        J(\vh)\varphi\overset{\eqref{jvinlocalcoords}}&{=}-\left(\frac{x}{\sqrt{\Dot t^2+x^2\Dot x^2}}\right)\frac{\partial\varphi}{\partial s}+\left(\frac{\dot t}{x\sqrt{\Dot t^2+x^2\Dot x^2}}\right)\frac{\partial\varphi}{\partial \theta}=-\left(\frac{\cos s}{h}\right)\frac{\partial\varphi}{\partial s}+\frac{\cos s}{x}\frac{\partial\varphi}{\partial \theta},\\
        \s\varphi\overset{\eqref{sinloccords}}&{=}\left(\frac{\dot t}{\Dot t^2+x^2\Dot x^2}\right)\frac{\partial\varphi}{\partial s}+\left(\frac{\dot x^2}{\Dot t^2+x^2\Dot x^2}\right)\frac{\partial\varphi}{\partial \theta}=\left(\frac{\cos^2 s}{xh}\right)\frac{\partial\varphi}{\partial s}+\left(\frac{\sin^2 s}{x^2}\right)\frac{\partial\varphi}{\partial\theta}.
    \end{align*}
    Therefore
    \begin{equation*}
        \left(J(\vh)\varphi\right)^2=\left(\frac{\cos^2 s}{ h^2}\right)\left(\frac{\partial\varphi}{\partial s}\right)^2-\left(\frac{2\cos^2 s}{x h}\right)\frac{\partial \varphi}{\partial s}\frac{\partial \varphi}{\partial \theta}+\left(\frac{\cos^2 s}{x^2}\right)\left(\frac{\partial\varphi}{\partial\theta}\right)^2
    \end{equation*}
    and
    \begin{equation*}
        -\s\varphi J(\vh)\varphi=\left(\frac{\cos^3 s}{x h^2}\right)\left(\frac{\partial\varphi}{\partial s}\right)^2-\left(\frac{\cos s\cos 2s}{x^2 h}\right)\frac{\partial \varphi}{\partial s}\frac{\partial \varphi}{\partial \theta}-\left(\frac{\sin^2 s\cos s}{x^3}\right)\left(\frac{\partial\varphi}{\partial\theta}\right)^2,
    \end{equation*}
    whence \eqref{primeinstabgenericocondedetheta} follows. Next, assume that $\varphi$ is independent of $\theta$. Let $\mu\geq 0$. Then 
    \begin{equation*}
    \begin{split}
         -\s\varphi\,J(\vh)\varphi-\left(L+\mu\right)\left(J(\vh)\varphi\right)^2\overset{\eqref{primeinstabgenericocondedetheta}}&{=}\left(\frac{\cos^3 s}{x h^2}-\frac{(L+\mu)\cos^2 s}{ h^2}\right)\left(\frac{\partial\varphi}{\partial s}\right)^2\\
         &=\left(\frac{\cos s}{2Lx h^2}\right)\left((L-\mu)\cos^2 s+(L+\mu)(1-2L)\right).
    \end{split}
    \end{equation*}
    Assume first that $L-\mu\geq 0$. Since $\cos^2 s>(1-2L)$ on $(-\arccos\sqrt{1-2L},\arccos\sqrt{1-2L})$, then
    \begin{equation*}
       (L-\mu)\cos^2 s+(L+\mu)(1-2L)\geq 2L(1-2L)\geq 0.
    \end{equation*}
    Assume instead $L-\mu<0$. Since $\cos^2 s\leq \cos^2(0)=1$ on $(-\arccos\sqrt{1-2L},\arccos\sqrt{1-2L})$, then
    \begin{equation*}
        (L-\mu)\cos^2 s+(L+\mu)(1-2L)\geq  (L-\mu)+(L+\mu)(1-2L)=2L(1-L)-2L\mu,
    \end{equation*}
    whence $ -\s\varphi\,J(\vh)\varphi-\left(L+\mu\right)\left(J(\vh)\varphi\right)^2\geq 0$ provided that $\mu\leq 1-L$.
\end{proof}
Motivated by \Cref{firstorderstabilitylemma}, we introduce a relevant class of variations. Precisely, we say that a non-characteristic variation of $S_L$ is \emph{rotationally invariant} if $\varphi$ is independent of $\theta$. 
\begin{remark}
    We stress that requiring a variation to be rotationally invariant is much weaker than restricting to the class of rotationally invariant competitors, as no restriction on $\psi$ is imposed.
\end{remark}
Combining \Cref{zeroorderstabilitilemma}, \Cref{firstorderstabilitylemma} and \Cref{carstabprop}, 
we deduce that Pansu-Minkowski spheres are (much more than) stable for $\tmc^\hhh$ along area-preserving, rotationally invariant, non-characteristic variations.
\begin{proposition}\label{muchmorethanstabproprotinv}
    Let $L\in\left(0,\frac{1}{2}\right].$ Let $\Phi$ be a non-characteristic rotationally invariant variation. Then
    \begin{equation}\label{stabilitformulateoremaequazidrvorjvntrgvojtvnrtvojnnonintro}
  \delta^2\penafun^\hhh_L(S_L)[\Phi]\geq 4(1-L)\int_{S_L}\left(J(\vh)\varphi\right)^2\,d\sigma^\hhh+\frac{8L(1-2L)}{1-L}\int_{S_L}\varphi^2\,d\sigma^\hhh.
    \end{equation}
    In particular, $S_L$ is stable for $\tmc^\hhh$ along area-preserving non-characteristic  rotationally invariant variations.
\end{proposition}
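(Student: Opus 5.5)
The plan is to start from the explicit expression for the second variation of $\penafun^\hhh_L$ at $S_L$ derived at the beginning of \Cref{sec_stab}:
\begin{equation*}
\delta^2\penafun^\hhh_L(S_L)[\Phi]=4\int_{S_L}\Big(\big(-\s\varphi\,J(\vh)\varphi-L\left(J(\vh)\varphi\right)^2\big)+\big(2\s\alpha-H^\hhh\left(\alpha^2+4L^2\right)\big)\varphi^2\Big)\,d\sigma^\hhh.
\end{equation*}
This formula already absorbs the $\psi$-terms by means of the Euler-Lagrange equation \eqref{derivationeulerlagrangeminkgeneral} with $\lagrange=4L$. It holds for an \emph{arbitrary} non-characteristic variation, so no first-order area constraint is needed at this stage. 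I will then bound the two integrands pointwise and separately.

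For the first-order part, since $\Phi$ is rotationally invariant, $\varphi$ is independent of $\theta$. The second statement of \Cref{firstorderstabilitylemma} then gives pointwise on $S_L\setminus S_0$
\begin{equation*}
-\s\varphi\,J(\vh)\varphi-L\left(J(\vh)\varphi\right)^2\geq (1-L)\left(J(\vh)\varphi\right)^2.
\end{equation*}
For the zero-order part, \Cref{zeroorderstabilitilemma} yields $2\s\alpha-H^\hhh(\alpha^2+4L^2)\geq \frac{2L(1-2L)}{1-L}$ pointwise. Multiplying by $\varphi^2\geq 0$, integrating against $d\sigma^\hhh$, and recalling that $d\sigma^\hhh\geq 0$ by \eqref{desigmahforpmsphereserijenrviervjnc}, we obtain exactly \eqref{stabilitformulateoremaequazidrvorjvntrgvojtvnrtvojnnonintro}. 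The integrals are finite because $\varphi$ is compactly supported in $S_L\setminus S_0$, where all the coefficients are smooth.

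For the stability claim, let $\Phi$ be an area-preserving, non-characteristic, rotationally invariant variation. It is in particular first-order area-preserving, so the right-hand side above is nonnegative and $\delta^2\penafun^\hhh_L(S_L)[\Phi]\geq 0$. Since $\Phi$ is area-preserving, $\delta^2\area^\hhh(S_L)[\Phi]=0$, and therefore $\delta^2\tmc^\hhh(S_L)[\Phi]=\delta^2\penafun^\hhh_L(S_L)[\Phi]\geq 0$. Alternatively, one could invoke \Cref{carstabprop}. The only subtlety there is that the area-preserving modification built in \Cref{docarmo} keeps the velocity $\X$, and hence keeps $\varphi$ $\theta$-independent, so the class of rotationally invariant variations is preserved.

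I expect no real obstacle. The only point to check is that the lemmas' pointwise bounds hold on the whole open profile interval, which is where $\varphi$ is supported. This is exactly the range used in \Cref{firstorderstabilitylemma} via $1-2L<\cos^2 s\leq 1$, and \Cref{zeroorderstabilitilemma} covers it since $x$ and $h$ are maximized at $s=0$.
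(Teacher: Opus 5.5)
Your proposal is correct and follows the paper's own argument. You plug the pointwise bounds of \Cref{firstorderstabilitylemma} (for $\theta$-independent $\varphi$) and \Cref{zeroorderstabilitilemma} into the second variation of $\penafun^\hhh_L$ at $S_L$, then get stability from the trivial implication (ii)$\implies$(i) of \Cref{carstabprop}, which is exactly your observation that $\delta^2\area^\hhh(S_L)[\Phi]=0$ for area-preserving $\Phi$. The only quibble is that the right-hand side is nonnegative for every $L\in\left(0,\frac12\right]$ with no area constraint at all. So the appeal to first-order area preservation and the remark on \Cref{docarmo} are unnecessary.
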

\begin{remark}
    When $L=\frac{1}{2}$, i.e. when $S_L$ is the standard Pansu sphere, the coefficient multiplying the zero-order term in the right hand side of \eqref{stabilitformulateoremaequazidrvorjvntrgvojtvnrtvojnnonintro} vanishes. Nevertheless, by the Poincaré inequality, it is still possible to provide a lower bound of the form 
    \begin{equation}\label{verolbdoveusipoincare}
        \delta^2\penafun^\hhh_L(S_L)[\Phi]\geq c_{\mathrm{I}}(\delta,L)\left(\|\varphi\|^2_{L^2(I(\delta))}+\|\dot\varphi\|^2_{L^2(I(\delta))}\right),
    \end{equation}
    where $\supp\varphi\subseteq I(\delta)\coloneqq\left(-\arccos\sqrt{1-2L}+\delta,\arccos\sqrt{1-2L}-\delta\right)$ and $c_{I}(\delta,L)>0$, and $c_{I}(\delta,L)$ tends to $0$ as $\delta\to 0^+$. 
\end{remark}
We conclude this section with the proof of \Cref{teononstabintro}: the rotational invariance constraint cannot be removed, as Pansu-Minkowski spheres are unstable under more general variations. 
%\color{purple}
%\begin{theorem}
 %    Let $L\in\left(0,\frac{1}{2}\right].$ There is a non-characteristic first-order area-preserving variation $\Phi$ such that 
 %   \begin{equation*}
%  \delta^2\penafun^\hhh_L(S_L)[\Phi]<0,
%    \end{equation*}
 %  i.e. Pansu-Minkowski spheres are not area-preserving stable along arbitrary non-characteristic variations.
%\end{theorem}
%\color{black}
\begin{proof}[Proof of \Cref{teononstabintro}]
    Fix $L\in\left(0,\frac{1}{2}\right].$  Let $0<\delta<\arccos\sqrt{1-2L}$. Let $M\in\mathbb{N}_+$. Let $\psi\in C^\infty_c(-\delta,\delta)$ be not identically vanishing. Define
    \begin{equation*}
        \varphi(s,\theta)=\psi(s)\sin M\theta, \qquad s\in\left(-\arccos\sqrt{1-2L},\arccos\sqrt{1-2L}\right),\,\theta\in(0,2\pi). 
    \end{equation*}
    Let $\X$ be such that $\X|_S=\varphi\vh$. Set $\Z=0$. Let $\Phi$ be a variation as in \eqref{costruirevariazioniinhn}. Then $\Phi$ is a non-characteristic horizontally normal variation with velocity $\X$ and with no acceleration. Moreover, 
    \begin{equation*}
        \delta\area^\hhh(S_L)[\Phi]\overset{\eqref{variazioneprimaareasubriemmain}}{=}\int_{S_L}\varphi H^\hhh\,d\sigma^\hhh\overset{\eqref{hdesigmah}}{=}\left(\int_0^{2\pi}\sin M\theta\, d\theta\right)\left(\int_{-\delta}^\delta\psi x(x+h)\,ds\right)=0,
    \end{equation*}
    whence $\Phi$ is first-order area-preserving.
    Notice that 
    \begin{equation*}
        \frac{\partial\varphi}{\partial s}=\dot \psi\sin M\theta,\qquad \frac{\partial\varphi}{\partial\theta}=M\psi\cos M\theta. 
    \end{equation*} 
    Since
    \begin{equation*}
        \int_0^{2\pi}\sin M\theta\cos M\theta\,d\theta=0,\qquad \int_0^{2\pi}\sin^2\theta\,d\theta=\int_0^{2\pi}\cos^2\theta\,d\theta=\pi,
    \end{equation*}
  %  Let $c_1>0$ be such that 
 %   \begin{equation*}
  %    \left|\frac{\cos^3 s}{x h^2}-\frac{L\cos^2 s}{ h^2}\right|,\,\left|\frac{2L\cos^2 s}{ x h}-\frac{\cos s\cos 2s}{x^2 h}\right|,\,\left|\frac{2(1-2L)}{x^3h}\right|<c_1\qquad\text{on $(-\delta,\delta)$}.
  %  \end{equation*}
  %  Moreover, it is straightforward to show that there exist $c_2,c_3,c_4>0$ such that
  %  \begin{equation*}
   %    c_2<\frac{\sin^2 s\cos s}{x^3}+\frac{L\cos^2 s}{x^2},\qquad c_3<\frac{x^2 h}{\cos s}<c_4\qquad\text{on $(-\delta,\delta)$}.
  %  \end{equation*}
  %  Finally, let $c_5>0$ be such that 
  %   \begin{equation*}
  %    |\psi|,\,|\dot\psi|<c_5\qquad\text{on $(-\delta,\delta)$}.
  %  \end{equation*}
     then \eqref{desigmahforpmsphereserijenrviervjnc} , \Cref{zeroorderstabilitilemma} and \Cref{firstorderstabilitylemma} imply that $\delta^2\penafun_L^\hhh(S_L)[\Phi]$ equals
    \begin{equation*}     
4\pi
\int_{-\delta}^{\delta}
\Bigg[
\left(
\frac{x\cos^2 s}{h}
-
\frac{Lx^2\cos s}{h}
\right)\dot\psi^2
+
\left(
\frac{2(1-2L)}{x\cos s}
\right)\psi^2\Bigg]\,ds-
4\pi M^2\int_{-\delta}^{\delta}\left(
\frac{h\sin^2 s}{x}
+
Lh\cos s
\right)
\psi^2
\,ds.
    \end{equation*}
    Since
    \begin{equation*}
        \int_{-\delta}^{\delta}\left(
\frac{h\sin^2 s}{x}
+
Lh\cos s
\right)
\psi^2
\,ds>0, 
    \end{equation*}
    then $\delta^2\penafun_L^\hhh(S_L)[\Phi]<0$ provided that $M$ is sufficiently large. The thesis follows by \Cref{carstabprop}.
\end{proof}
\section{Total mean curvature: local minimality of Pansu-Minkowski spheres}\label{sec_minloc}
In this section we prove that Pansu-Minkowski spheres are local minimizers, in the sense of \eqref{optiprobuno}, in the class of rotationally invariant surfaces. To this aim, fix $L\in\left(0,\frac{1}{2}\right]$, and denote the profile of $S_L$ simply by $(x,t)$. Fix $\delta>0$, recall that  $I(\delta)= (-\arccos\sqrt{1-2L}+\delta,\arccos\sqrt{1-2L}-\delta).$ Fix $\varphi\in C^\infty_c(I(\delta))$, and set $S_L^\varphi\coloneqq S\cdot \varphi\vh$. By \eqref{eq:Hproduct} and  \eqref{normalerotinv}, $S^\varphi_L$ can be parametrized by $\left(\xi^\varphi(s,\theta),\eta^\varphi(s,\theta), t^\varphi(s,\theta)\right)$, where
\begin{align*}
    \xi^\varphi&\coloneqq\left(x+\frac{\varphi\dot t}{\sqrt{\dot t^2+x^2\dot x^2}}\right)\cos\theta-\left(\frac{\varphi x\dot x}{\sqrt{\dot t^2+x^2\dot x^2}}\right)\sin\theta,\\
    \eta^\varphi&\coloneqq \left(x+\frac{\varphi\dot t}{\sqrt{\dot t^2+x^2\dot x^2}}\right)\sin\theta+\left(\frac{\varphi x\dot x}{\sqrt{\dot t^2+x^2\dot x^2}}\right)\cos\theta,\\
    t^\varphi&\coloneqq t-\frac{\varphi x^2\dot x}{\sqrt{\dot t^2+x^2\dot x^2}}.
\end{align*}
Since
\begin{equation*}
    \sqrt{\left(\xi^\varphi(s,\theta)\right)^2+\left(\eta^\varphi(s,\theta)\right)^2}=\sqrt{\left(x+\frac{\varphi\dot t}{\sqrt{\dot t^2+x^2\dot x^2}}\right)^2+\left(\frac{\varphi x\dot x}{\sqrt{\dot t^2+x^2\dot x^2}}\right)^2}\eqqcolon x^\varphi(s),
\end{equation*}
then $S_L^\varphi$ is a rotationally invariant surface, with profile $(x^\varphi,t^\varphi)$.
\begin{proposition}\label{localminimalitipropositionen}
    Fix $L\in\left(0,\frac{1}{2}\right]$ and $\delta>0$. There exists $\varepsilon=\varepsilon(\delta,L)>0$ such that, if $\varphi\in C^\infty_c(I(\delta))$ satisfies 
    \begin{equation*}
        \area^\hhh\left(S_L^\varphi\right)=\area^\hhh\left(S_L\right),\qquad\|\varphi\|_{C^2(I(\delta))}\leq\varepsilon,
    \end{equation*}
    then 
    \begin{equation*}
        \tmc^\hhh\left(S_L\right)\leq\tmc^\hhh\left(S_L^\varphi\right).
    \end{equation*}
\end{proposition}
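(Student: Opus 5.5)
We plan to reduce the statement to a Taylor expansion of the penalized functional $\penafun^\hhh_L=\tmc^\hhh-4L\area^\hhh$ along the explicit one-parameter family $\tau\mapsto S_L^{\tau\varphi}$, $\tau\in[0,1]$, and to exploit the coercivity \eqref{stabilitformulateoremaequazidrvorjvntrgvojtvnrtvojnnonintro}, which needs no area constraint. Since $\area^\hhh(S_L^\varphi)=\area^\hhh(S_L)$, it suffices to show that $\penafun^\hhh_L(S_L^\varphi)\geq\penafun^\hhh_L(S_L)$. The family $S_L^{\tau\varphi}=\Phi_\tau(S_L)$ is realized by a variation as in \Cref{costruirevariazionesubriem}, with $\X|_{S_L}=\varphi\vh$ and $\Z=0$, compactly supported away from $S_0$ because $\supp\varphi\subseteq I(\delta)$. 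It is also rotationally invariant, since $\varphi=\varphi(s)$. Using the explicit parametrization $(x^{\tau\varphi},t^{\tau\varphi})$, we write $g(\tau)\coloneqq\penafun^\hhh_L(S_L^{\tau\varphi})=2\pi\int_{I(\delta)}F\big(s,\tau\varphi,\tau\dot\varphi,\tau\ddot\varphi\big)\,ds$, where $F$ is a smooth integrand. It is smooth because on $I(\delta)$ the quantities $x$, $h$, $\cos s$ and $\dot t^2+x^2\dot x^2$ are bounded below by constants depending on $\delta,L$; outside $I(\delta)$ the integrand coincides with that of $S_L$.

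The key steps, in order, are the following. First, $g'(0)=0$ by \Cref{characterizationcriticalpointsproposition}, since $S_L$ solves \eqref{derivationeulerlagrangeminkgeneral} with $\lagrange=4L$. Second, $g''(0)=\delta^2\penafun^\hhh_L(S_L)[\Phi]$. The variation is horizontally normal with $\Z=0$, so $\varphi$ in \eqref{lefunzioniimportanti} is exactly our $\varphi$, and the $\psi$-term drops out by stationarity. Then \Cref{muchmorethanstabproprotinv}, together with \eqref{desigmahforpmsphereserijenrviervjnc} and the fact that $J(\vh)\varphi=-\frac{\cos s}{h}\dot\varphi$, gives
\[
g''(0)\geq c_0(\delta,L)\int_{I(\delta)}\left(\dot\varphi^2+\varphi^2\right)ds .
\]
For $L=\frac12$ we use instead the Poincaré bound \eqref{verolbdoveusipoincare}. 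Third, we write $g(1)-g(0)=\frac12 g''(0)+\int_0^1\frac{(1-\tau)^2}{2}g'''(\tau)\,d\tau$.

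The main obstacle is the bound on $g'''$. The integrand $F$ depends on $\ddot\varphi$, since $H^\hhh$ involves second derivatives of the profile. Hence $g''(0)$ controls only $H^1$, while a naive bound on $g'''$ would involve $\ddot\varphi^2|\varphi|$-type terms. We would address this by exploiting the structure of $\tmc^\hhh$: the integrand $\frac{x^3(\dot x\ddot t-\ddot x\dot t)+\dot t^3}{\dot t^2+x^2\dot x^2}$ is affine in the second derivatives of the profile. After an integration by parts in $s$, which is allowed because everything is compactly supported in $I(\delta)$, the second-derivative contributions become a null Lagrangian plus terms of first order. Thus $g(\tau)=2\pi\int G(s,\tau\varphi,\tau\dot\varphi)\,ds$ with $G$ smooth. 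Then $|g'''(\tau)|\leq C(\delta,L)\int(|\varphi|+|\dot\varphi|)(\varphi^2+\dot\varphi^2)\,ds\leq C\varepsilon\int(\varphi^2+\dot\varphi^2)\,ds$ for $\|\varphi\|_{C^1}\leq\varepsilon$; the $C^2$ smallness in the statement is used only to stay inside the region where $G$ is smooth and uniformly controlled. Choosing $\varepsilon$ so that $C\varepsilon<\frac{c_0}{2}$ gives $g(1)\geq g(0)$. Since the area is the same at $\tau=0$ and $\tau=1$, this yields $\tmc^\hhh(S_L)\leq\tmc^\hhh(S_L^\varphi)$.

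If the reduction to first order via the null-Lagrangian integration by parts fails, the fallback is to keep $\ddot\varphi$ in the remainder. In that case we would bound the third-order terms containing $\ddot\varphi$ linearly and integrate by parts once more to move a derivative onto the smooth coefficients, which again yields the estimate $C\varepsilon\|\varphi\|_{H^1}^2$.
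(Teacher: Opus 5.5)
Your proposal is correct and follows essentially the paper's route. You Taylor-expand $\penafun^\hhh_L$ along $\tau\mapsto S_L^{\tau\varphi}$ (the variation with $\X|_{S_L}=\varphi\vh$), kill the first-order term by stationarity, bound $\delta^2\penafun^\hhh_L$ below by a multiple of $\|\varphi\|^2_{L^2(I(\delta))}+\|\dot\varphi\|^2_{L^2(I(\delta))}$ via \Cref{muchmorethanstabproprotinv} and \eqref{verolbdoveusipoincare}, and control the remainder using that the integrand is affine in the second derivatives of the profile.

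The only difference is bookkeeping. The paper uses the Lagrange remainder and integrates by parts inside $\partial^2_\tau f(Q(\tau))$, so its coefficients still depend on $\tau\ddot\varphi$ and it needs $C^2$-smallness. You instead reduce the Lagrangian to a first-order one $G(s,\varphi,\dot\varphi)$ before differentiating, which lets you bound $g'''$ with only $C^1$-smallness.
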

\begin{proof}
For $p\in S_L$ and $\tau\in[-1,1]$, set 
\begin{equation*}
    \Phi(\tau,p(s,\theta))=\left(\xi^{\tau\varphi}(s,\theta),\eta^{\tau\varphi}(s,\theta),t^{\tau\varphi}(s,\theta)\right),
\end{equation*}
and extend it smoothly into a variation of $\hh^1$. For any $\tau\in[0,1]$, $\Phi(\tau,S)$ is the rotationally invariant surface with profile $(x^{\tau\varphi},t^{\tau\varphi})$. Moreover, denoting by $\X$ the normal velocity of $\Phi$, then $\X|_S=\varphi\vh$. Define $f:\rr^5\to\rr$ by
\begin{equation*}
    f(p_1,q_1,q_2,r_1,r_2)\coloneqq\frac{p_1^3\left(q_1r_2-q_2r_1\right)+q_2^3}{\left(q_2^2+p_1^2q_1^2\right)^2}-4Lp_1\sqrt{q_2^2+p_1^2q_1^2}.
\end{equation*}
Moreover, define $Q:[0,1]\to\rr^5$ by 
\begin{equation*}
    Q(\tau)\coloneqq\left(x^{\tau\varphi},\dot x^{\tau\varphi},\dot t^{\tau\varphi},\ddot x^{\tau\varphi},\ddot t^{\tau\varphi}\right).
\end{equation*}
With these definitions,
\begin{equation*}
    \penafun_L^\hhh\left(\Phi(\tau,S)\right)=2\pi\int_{I(\delta)}f\left(Q(\tau)\right)\,ds.
\end{equation*}
By Taylor's formula with Lagrange remainder, and since $S_L$ is an area-preserving critical point along non-characteristic variations, there exists $\tilde\tau\in(0,1)$ such that 
\begin{equation*}
    \begin{split}
        \penafun_L^\hhh\left(S_L^\varphi\right)&=\penafun_L^\hhh\left(S_L\right)+\left.\frac{d}{d\tau}\right|_{\tau=0}\penafun_L^\hhh\left(\Phi(\tau,S)\right)+\frac{1}{2}\left.\frac{d^2}{d\tau}\right|_{\tau=\tilde \tau}\penafun_L^\hhh\left(\Phi(\tau,S)\right)\\
        &=\penafun_L^\hhh\left(S_L\right)+\delta\penafun_L^\hhh(S)[\Phi]+\frac{1}{2}\delta^2\penafun_L^\hhh(S)[\Phi]+\pi\int_{I(\delta)}\left(\left.\frac{\partial^2}{\partial\tau}\right|_{\tau=\tilde \tau}f(Q(\tau))-\left.\frac{d^2}{d\tau}\right|_{\tau=0}f(Q(\tau))\right)\,ds\\
        \overset{\eqref{variazioneprimazerotmcstatement}}&{=}\penafun_L^\hhh\left(S_L\right)+\underbrace{\frac{1}{2}\delta^2\penafun_L^\hhh(S)[\Phi]}_{\mathrm{I}}+\underbrace{\pi\int_{I(\delta)}\left(\left.\frac{\partial^2}{\partial\tau}\right|_{\tau=\tilde \tau}f(Q(\tau))-\left.\frac{d^2}{d\tau}\right|_{\tau=0}f(Q(\tau))\right)\,ds}_{\mathrm{II}}.\\
    \end{split}
\end{equation*}
Since $f$ is affine in the variables $r_1$ and $r_2$, a simple computation yields that 
\begin{equation*}
    \frac{\partial^2}{\partial\tau^2}f(Q(\tau))=A^{\tau\varphi}\varphi^2+B^{\tau\varphi}\varphi\dot\varphi+C^{\tau\varphi}\varphi\ddot\varphi+D^{\tau\varphi}\dot\varphi^2+E^{\tau\varphi}\dot\varphi\ddot\varphi,
\end{equation*}
where $A^{\tau\varphi},B^{\tau\varphi},D^{\tau\varphi}$ depend smoothly on $\tau\varphi,\tau\dot\varphi,\tau\ddot\varphi$ and $C^{\tau\varphi},E^{\tau\varphi}$ depend smoothly on $\tau\varphi,\tau\dot\varphi$. Therefore, integrating by parts,
\begin{equation*}
     \int_{I(\delta)}\frac{\partial^2}{\partial\tau^2}f(Q(\tau))\,ds=\int_{I(\delta)}\left(A^{\tau\varphi}\varphi^2+\tilde B^{\tau\varphi}\varphi\dot\varphi+\tilde D^{\tau\varphi}\dot\varphi^2\right)\,ds,
\end{equation*}
where
\begin{equation*}
    \tilde B^{\tau\varphi}=B^{\tau\varphi}-\frac{\partial }{\partial s}C^{\tau\varphi},\qquad \tilde D^{\tau\varphi}=D^{\tau\varphi}- C^{\tau\varphi}-\frac{1}{2}\frac{\partial }{\partial s}E^{\tau\varphi}.
\end{equation*}
In particular, $A^{\tau\varphi},\tilde B^{\tau\varphi},\tilde D^{\tau\varphi}$ depend smoothly on $\tau\varphi,\tau\dot\varphi,\tau\ddot\varphi$. Therefore, since $\supp\varphi\subseteq I(\delta)$, there exists $c_{\mathrm{II}}(\delta,L)$ such that 
\begin{equation}\label{nonsopiuchenomeinventareperilabel}
    \left|\mathrm{II}\right|\leq c_{\mathrm{II}}(\delta,L)\|\varphi\|_{C^2(I(\delta))}\left(\|\varphi\|^2_{L^2(I(\delta))}+\|\dot\varphi\|^2_{L^2(I(\delta))}\right).
\end{equation}
Combining \eqref{nonsopiuchenomeinventareperilabel} and \eqref{verolbdoveusipoincare}, we conclude that 
\begin{equation*}
    \penafun_L^\hhh\left(S_L^\varphi\right)\geq \penafun_L^\hhh\left(S_L\right)+\left(\frac{1}{2}c_\mathrm{I}(\delta,L)-c_{\mathrm{II}}(\delta,L)\|\varphi\|_{C^2(I(\delta))}\right)\left(\|\varphi\|^2_{L^2(I(\delta))}+\|\dot\varphi\|^2_{L^2(I(\delta))}\right).
\end{equation*}
Since $\area^\hhh(S_L^\varphi)=\area^\hhh(S_L)$, then $\penafun_L^\hhh\left(S_L^\varphi\right)- \penafun_L^\hhh\left(S_L\right)=\tmc^\hhh\left(S_L^\varphi\right)- \tmc^\hhh\left(S_L\right)$, whence the thesis follows.
\end{proof}
\begin{proof}[Proof of \Cref{intro_teo3}]
    It follows combining \Cref{muchmorethanstabproprotinv} and \Cref{localminimalitipropositionen}.
\end{proof}
\section*{Declarations}
\footnotesize{
%\noindent{\it \textbf{Acknowledgements.}} The authors thank Fares Essebei, Andrea Pinamonti, Francesco Serra Cassano and Giacomo Vianello for interesting and valuable conversations on the topic of the paper.
%\smallskip
%
\noindent{%\textbf{Acknowledgments.}} The authors thank R. Monti, J. Pozuelo and M. Ritoré for fruitful discussion about the addressed topics. Part of this research was carried out while S. Verzellesi was visiting the Department of Mathematics at the University of Trento.}

%\smallskip
\noindent{\textbf{Conflict of interest.}} The authors have no financial interests or conflicts of interest related to the subject matter.
\smallskip

\noindent \textbf{Data availability statement.} Data sharing not applicable as no datasets were generated or analyzed during the current study.
\bibliographystyle{abbrvmr_macro}
\bibliography{biblio}
\end{document}